\documentclass{amsart}
\usepackage{amssymb}
\usepackage{amscd}
\usepackage{hyperref}
\usepackage{pdfpages}

\makeatletter
\newcommand*\rel@kern[1]{\kern#1\dimexpr\macc@kerna}
\newcommand*\widebar[1]{%
  \begingroup
  \def\mathaccent##1##2{%
    \rel@kern{0.8}%
    \overline{\rel@kern{-0.8}\macc@nucleus\rel@kern{0.2}}%
    \rel@kern{-0.2}%
  }%
  \macc@depth\@ne
  \let\math@bgroup\@empty \let\math@egroup\macc@set@skewchar
  \mathsurround\z@ \frozen@everymath{\mathgroup\macc@group\relax}%
  \macc@set@skewchar\relax
  \let\mathaccentV\macc@nested@a
  \macc@nested@a\relax111{#1}%
  \endgroup
}
\makeatother

\newcommand{\isomto}{\overset{\thicksim}{\longrightarrow}}

\newcommand{\longto}{\longrightarrow}

\def\bQ{{\mathbf{Q}}}  
  \def\bF{{\mathbf{F}}}

\def\cO{{\mathcal{O}}}

 \def\cU{{\mathcal{U}}} 
  \def\cL{{\mathcal{L}}}

 \def\fm{{\mathfrak{m}}} 
 \def\fM{{\mathfrak{M}}} \def\fL{{\mathfrak{L}}}


\DeclareMathOperator\Gal{{Gal}}

\DeclareMathOperator\length{{length}}
\DeclareMathOperator\BC{{BC}}

\renewcommand{\det}{\mathrm{det}}

\usepackage[OT2,T1]{fontenc}
\DeclareSymbolFont{cyrletters}{OT2}{wncyr}{m}{n}
\DeclareMathSymbol{\Sha}{\mathalpha}{cyrletters}{"58}

\DeclareMathOperator\id{{id}}

\DeclareMathOperator\tr{{tr}}

\DeclareMathOperator\Fitt{{Fitt}}


\def\sep{{\rm sep}}

\def\H{{\rm H}}
\def\U{{\rm U}}
\def\C{{\rm C}}

\def\Frob{{\rm Frob}}

\def\tors{{\rm tors}}
\def\phi{\varphi}

\makeatletter
\def\swappedhead#1#2#3{%
  \thmnumber{\@upn{\the\thm@headfont#2\@ifnotempty{#1}{.~}}}%
  \thmname{#1}
  \thmnote{ {\the\thm@notefont(#3)}}}
\makeatother



\newcounter{mainthmcounter}

\theoremstyle{plain}
\newtheorem{theorem}[subsection]{Theorem}
\newtheorem{corollary}[subsection]{Corollary}
\newtheorem{lemma}[subsection]{Lemma}
\newtheorem{proposition}[subsection]{Proposition}

\theoremstyle{definition}

\newtheoremstyle{mainthmstyle}{}{}{\itshape}{}{\bfseries}{.}{5pt plus 1pt minus 1pt}{\bf Theorem \Alph{mainthmcounter}}
\theoremstyle{mainthmstyle}
\newtheorem{mainthm}[mainthmcounter]{Theorem}

\theoremstyle{definition}
\swapnumbers
\newtheorem{void}[subsection]{}

\title[Arithmetic of characteristic $p$ special $L$-values]
{Arithmetic of characteristic $p$ special $L$-values \\
 (with an appendix by V. Bosser)}

\author{Bruno Angl\`es}
\address{Universit\'e de Caen, CNRS UMR 6139, Campus II, Boulevard Mar\'echal Juin, B.P. 5186, 14032 Caen Cedex, France.}
\email{bruno.angles@unicaen.fr}

\author{Lenny Taelman}
\address{Mathematisch Instituut, P.O. Box 9512, 2300 RA Leiden, The Netherlands}
\email{lenny@math.leidenuniv.nl}

\begin{document}

\begin{abstract}
Recently the second author has associated a finite $\bF_q[T]$-module $H$ to the Carlitz module over a finite extension of $\bF_q(T)$. This module is an analogue of the ideal class group of a number field.

In this paper we study the Galois module structure of this module $H$ for `cyclotomic' extensions of $\bF_q(T)$. We obtain function field analogues of some classical results on cyclotomic number fields, such as the $p$-adic class number formula, and a theorem of Mazur and Wiles about the Fitting ideal of ideal class groups. 
We also relate the Galois module $H$ to Anderson's module of circular units, and give a negative answer to Anderson's Kummer-Vandiver-type conjecture.

These results are based on a kind of equivariant class number formula which refines the second author's class number formula for the Carlitz module.
\end{abstract}

\maketitle

\setcounter{tocdepth}{1}
\tableofcontents

\section{Introduction}

\begin{void}
Let $q$ be a prime power and $A:=\bF_q[T]$ the polynomial ring in one variable $T$ over a finite field $\bF_q$ with $q$ elements. Let $P\in A$ be monic and irreducible. The special $L$-values referred to in the title are values at $s=1$ of $\infty$-adic and $P$-adic Goss $L$-functions associated with various characters of $(A/PA)^\times$.
\end{void}

\begin{void}\label{infL}
Let us first define the relevant $\infty$-adic $L$-values. Let $k_\infty:=\bF_q((T^{-1}))$ be the completion of $k$ at the place at infinity. Let $F$ be a field extension of $\bF_q$ and let $\chi\colon (A/PA)^\times \to F^\times$ be a homomorphism. We define the \emph{$\infty$-adic $L$-value of $\chi$ at $1$} by the series
\begin{equation}\label{eqinfL}
	L(1,\chi) := \sum_{a\in A_+} \frac{\chi(a)}{a} \in F \otimes_{\bF_q} k_\infty,
\end{equation}
where $A_+$ denotes the set of monic elements of $A$, and where we define $\chi(a)$ as follows:
\[
	\chi(a) := \begin{cases} 
	\chi(a+PA) & \text{if } a \not\in PA, \\
	1 & \text{if }a \in PA\text{ and }\chi=1, \\
	0 & \text{if }a \in PA\text{ and }\chi\neq 1. \end{cases}
\]
This series converges because there are only finitely many elements of $A$ of bounded $\infty$-adic valuation.
\end{void}

\begin{void}\label{intPL}
For the $P$-adic $L$-values, consider the completion $A_P := \varprojlim_n A/P^nA$ and a homomorphism $\chi\colon (A/PA)^\times \to A_P^\times$. Then we define the \emph{$P$-adic $L$-value of $\chi$ at $1$} by
the series
\begin{equation}\label{eqPL}
	L_P(1,\chi) := \sum_{n\geq 0} \sum_{a\in A_{n,+}} \frac{\chi(a)}{a} \in A_P,
\end{equation}
where $A_{n,+}\subset A$ is the set of monic elements of degree $n$, and where this time we define $\chi(a)$ by
\[
	\chi(a) := \begin{cases} 
	\chi(a+PA) & \text{if } a \not\in PA, \\
	0 & \text{if }a \in PA. \end{cases}
\]
The convergence of this series is much more subtle. It follows from either \cite[Lemma 3.6.7]{GOS3} or \cite[\S 4.10]{AND} that the series (\ref{eqPL}), with the terms grouped as indicated, converges in $A_P$.
\end{void}

\begin{void}
In this paper we study arithmetic properties of these special $L$-values. In particular, we prove function field versions of various results about cyclotomic number fields such as the theorem of Mazur and Wiles relating the Fitting ideal of class groups to Bernoulli numbers, and the $p$-adic class number formula. We also consider various analogues of the Kummer-Vandiver problem.
\end{void}

\begin{void}
The arithmetic properties encoded by these $L$-values are closely related to the \emph{Carlitz module} (a particular Drinfeld module), and to the ``unit module'' and ``class module'' associated to the Carlitz module by the second author \cite{TAE1,TAE2}. One of the principal objectives of this paper is to relate the Galois module structure of these modules to the above special $L$-values.

In the next section we recall some of the theory of the Carlitz module, and state our main results. Along the way we fix some notation.
\end{void}

\section{Statement of the principal results}

\begin{void}
Let $A:=\bF_q[T]$. For any $A$-algebra $R$ denote by $\C(R)$ the $A$-module whose underlying $\bF_q$-vector space is $R$, equipped with the unique $A$-module structure
\[
	A \times \C(R) \to \C(R)
\]
satisfying
\[
	 (T,r) \mapsto Tr + r^q
\]
for all $r \in R$. The resulting functor $\C$ from the category of $A$-algebras to the category of $A$-modules is called the \emph{Carlitz module}. It is a \emph{Drinfeld module} of rank $1$. See \cite{GOS} for more background on Drinfeld modules and on the Carlitz module.
\end{void}

\begin{void}\label{expdef}
Let $k=\bF_q(T)$ be the fraction field of $A$. There is a unique power series
$\exp_\C X$ of the form
\[
	\exp_\C X = X + e_1 X^q + e_2 X^{q^2} + \cdots \in k[[X]]
\]
such that
\begin{equation}\label{expfuneq}
	\exp_\C (TX) = T\exp_\C X + (\exp_\C X)^q.
\end{equation}
 This power series is called the \emph{Carlitz exponential}.
If $F$ is a finite extension of $k_\infty := \bF_q((T^{-1}))$ then the power series $\exp_\C$ defines an entire function on $F$ and the functional equation (\ref{expfuneq}) implies that $\exp_\C$ defines an $A$-module homomorphism $\exp_\C\colon F \to \C(F)$. 
\end{void}

\begin{void}
Now let $K$ be a finite extension of $k$. Let $\cO_K$ be the integral closure of $A$ in $K$. Define
$K_\infty := K \otimes_k k_\infty$. The $k_\infty$-algebra $K_\infty$ is canonically isomorphic with $\prod_{v\mid \infty} K_v$ (and hence the reader should beware that $K_\infty$ is not necessarily a field, as the notation might suggest).
The Carlitz exponential defines an $A$-linear map
\[
	\exp_\C \colon K_\infty \to \C(K_\infty).
\]
It is shown in \cite{TAE1} that the $A$-module
\[
	\U(\cO_K) := \left\{ \gamma \in K_\infty \colon \exp_\C \gamma \in \C(\cO_K) \right\}
\]
is finitely generated, and that the $A$-module
\[
	\H(\cO_K) := \frac{\C(K_\infty)}{ \C(\cO_K) + \exp_\C K_\infty  }
\]
is finite.  

$\H(\cO_K)$ is an $A$-module analogue of the ideal class group of a number field and $\U(\cO_K)$ is an $A$-module analogue of the lattice of logarithms of units in a number field.

We will denote by $\cU \subset \C(\cO_K)$ the image of $\U(\cO_K)$ under $\exp_\C$. Whereas $\cU$ is a finitely generated $A$-module, the $A$-module $\C(\cO_K)$ is \emph{not} finitely generated by \cite{POO}.
\end{void}

\begin{void}
Let $P\in A$ be monic irreducible and denote its degree by $d$. Let $K$ be the splitting field of the $P$-torsion of the Carlitz module over $k$. In the rest of this paper $K$ will denote this particular finite extension of $k$, associated to the fixed prime $P$.
\end{void}

\begin{void}\label{preelt}
This extension $K/k$ is often called the ``cyclotomic extension'' (associated to the prime $P$). It has been studied extensively and in section \ref{secelt} we review some of its properties. In particular, we will see that $K/k$ is an abelian extension of degree $q^d-1$, whose Galois group $\Delta$ is canonically isomorphic with $(A/PA)^\times$ (through its action on $\C(K)[P]\cong A/PA$). The extension is unramified away from $P$ and $\infty$, it is totally ramified at $P$, and the decomposition and inertia groups at $\infty$ both coincide with the subgroup $\bF_q^\times$ of $(A/PA)^\times$.
\end{void}

 Our first result is a kind of \emph{equivariant class number formula}, relating the special values $L(1,\chi)$ to the $A[\Delta]$-modules $\H(\cO_K)$ and $\U(\cO_K)$. 

\begin{void}
To state the theorem, it is convenient to group all the $L(1,\chi)$ together in one equivariant $L$-value as follows. There is a unique element $L(1,\Delta) \in k_\infty[\Delta]^\times$ with the property that for every field extension $F/\bF_q$ and for every homomorphism $\chi\colon \Delta \to F^\times$ the image of $L(1,\Delta)$ under the homomorphism $k_\infty[\Delta] \to F\otimes k_\infty$ induced by $\chi$ equals $L(1,\chi)$.
\end{void}

\begin{void}
$K_\infty$ is free of rank one as a $k_\infty[\Delta]$-module, and it contains sub-$A[\Delta]$-modules $\cO_K$ and $\U(\cO_K)$. By \cite{TAE1} the natural maps $k_\infty \otimes_A \cO_K \to K_\infty$ and
$k_\infty \otimes_A \U(\cO_K) \to K_\infty$ are isomorphisms. Since $A[\Delta]$ is isomorphic to a finite product of rings $F[T]$ with $F$ a finite extension of $\bF_q$, it is a principal ideal ring. We conclude that $\cO_K$ and $\U(\cO_K)$ are free of rank one as $A[\Delta]$-modules.

\end{void}
\begin{mainthm}\label{thmCNF} We have
\[
	L(1,\Delta) \cdot \cO_K = \Fitt_{A[\Delta]} \H(\cO_K) \cdot \U(\cO_K)
\]
inside $K_\infty$, where $\Fitt_{A[\Delta]} \H(\cO_K)$ is the Fitting ideal of the $A[\Delta]$-module $\H(\cO_K)$.
\end{mainthm}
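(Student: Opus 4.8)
The plan is to establish an $A[\Delta]$-equivariant refinement of the class number formula of \cite{TAE2} --- of which the present statement is the case $\Delta=\{1\}$ --- and then to recognise the resulting equivariant $L$-value as $L(1,\Delta)$. The structural input is twofold. Since the Carlitz module is defined over $\bF_q$, the action of $\Delta=\Gal(K/k)$ on $K_\infty$ and on $\C(K_\infty)$ commutes with both the $A$-action and $\exp_\C$; hence $\cO_K$, $\U(\cO_K)$, $\cU$ and $\H(\cO_K)$ are $A[\Delta]$-modules and the presentation of $\H(\cO_K)$ is one of $A[\Delta]$-modules. And since $|\Delta|=q^d-1$ is prime to $p=\car\bF_q$, the ring $A[\Delta]$ is a finite product of principal ideal domains; consequently Fitting ideals of finite $A[\Delta]$-modules are principal, $H$-invariants and $H$-coinvariants coincide for subgroups $H\le\Delta$ (group cohomology with $\bF_q$-coefficients vanishes), and two $A[\Delta]$-lattices spanning the same free rank-one $k_\infty[\Delta]$-module admit a well-behaved index that is multiplicative in short exact sequences.

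I would then re-run the proof of \cite{TAE2} with coefficients in $A[\Delta]$. One introduces the equivariant $L$-value
\[
  L^{\mathrm{eq}}(\C/\cO_K)\;=\;\prod_{\fq}\frac{\Fitt_{A[\Delta]}(\cO_K/\fq\cO_K)}{\Fitt_{A[\Delta]}\bigl(\C(\cO_K/\fq\cO_K)\bigr)}\;\in\;k_\infty[\Delta]^\times,
\]
the product running over the maximal ideals $\fq$ of $A$, each factor being an invertible principal fractional ideal, normalised by its monic generators on the factors of $A[\Delta]$. The proof of \cite{TAE2} has two halves: a trace formula writing $L^{\mathrm{eq}}(\C/\cO_K)$ as the determinant over $A[\Delta]$ of $1$ minus a Frobenius-type (nuclear) operator attached to $\cO_K$; and a comparison, via $\exp_\C$, of the lattices $\C(\cO_K)$ and $\cU$ inside $\C(K_\infty)$, which evaluates that determinant through the sublattice $\U(\cO_K)$ and the Fitting ideal of $\H(\cO_K)$. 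Both halves are formal over a finite product of principal ideal domains --- ``orders of finite $A$-modules'' become ``$A[\Delta]$-Fitting ideals'', scalar Fredholm determinants become componentwise Fredholm determinants over the factors of $A[\Delta]$, and Taelman's finiteness and index arguments carry over unchanged --- so one obtains
\[
  L^{\mathrm{eq}}(\C/\cO_K)\cdot\cO_K\;=\;\Fitt_{A[\Delta]}\H(\cO_K)\cdot\U(\cO_K)\qquad\text{in }K_\infty.
\]

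Finally I would identify $L^{\mathrm{eq}}(\C/\cO_K)$ with $L(1,\Delta)$, one character at a time. For a monic irreducible $Q\ne P$ the extension $K/k$ is unramified at $\fq=(Q)$, so $\cO_K$ is $A[\Delta]$-free of rank one locally at $\fq$ and $\Fitt_{A[\Delta]}(\cO_K/\fq\cO_K)=(Q)$; and because modulo $Q$ the action of $Q$ on the Carlitz module equals the $q^{\deg Q}$-power Frobenius, the module $\C(\cO_K/\fq\cO_K)$ is cyclic with presentation $A[\Delta]/(\sigma_\fq-Q)$, $\sigma_\fq=\Frob_Q\in\Delta$, so its Fitting ideal is $(\sigma_\fq-Q)$. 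Using the explicit reciprocity $\Frob_Q\leftrightarrow(Q\bmod P)$ recalled in \S\ref{secelt}, the local factor at $\fq$ therefore has $\chi$-component $Q/(Q-\chi(Q))=(1-\chi(Q)Q^{-1})^{-1}$. At the ramified prime $\fq=P$ the extension is totally ramified and $\Delta$ acts trivially on the residue field $\cO_K/P\cO_K=A/P$, so the local factor is $P/(P-1)$ (using $\C(A/P)\cong A/(P-1)$) on the trivial component of $A[\Delta]$ and trivial on every other component --- exactly what the conventions of \ref{infL} prescribe ($\chi(P)=0$ for $\chi\ne 1$, $\mathbf 1(P)=1$). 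Taking the product over all $\fq$, the $\chi$-component of $L^{\mathrm{eq}}(\C/\cO_K)$ is $\prod_Q(1-\chi(Q)Q^{-1})^{-1}=\sum_{a\in A_+}\chi(a)a^{-1}=L(1,\chi)$ for every $\chi$; hence $L^{\mathrm{eq}}(\C/\cO_K)=L(1,\Delta)$, and combining with the previous display finishes the proof.

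I expect the middle step to be the main obstacle: making precise the ``class number formula with coefficients in $A[\Delta]$'' --- that is, verifying that the lattice-comparison argument of \cite{TAE2} is genuinely insensitive to extending scalars from $A$ to the group ring, which is exactly the point where the hypothesis $p\nmid|\Delta|$ is used. It is worth stressing that applying \cite{TAE2} only to the subfields $K^H$ would not suffice: the orders of $\H(\cO_{K^H})\cong\H(\cO_K)^H$ determine, by M\"obius inversion over the subgroup lattice of $\Delta$, only the norms of the Galois orbits of $\chi$-parts of $\H(\cO_K)$, not the full Fitting ideal $\Fitt_{A[\Delta]}\H(\cO_K)$.
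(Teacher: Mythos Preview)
Your overall strategy coincides with the paper's: express $L(1,\Delta)$ as an equivariant Euler product over the primes of $A$, establish an $A[\Delta]$-valued trace formula extending that of \cite{TAE2}, and then evaluate the resulting determinant via the short exact sequence $0\to K_\infty/\U(\cO_K)\to \C(K_\infty)/\C(\cO_K)\to\H(\cO_K)\to 0$, which splits over $A[\Delta]$ precisely because the latter is a product of principal ideal domains. Your observation that the Carlitz action of $Q$ on any $A/Q$-algebra coincides with the $q^{\deg Q}$-power map, so that Carlitz-$Q$ acts on $\cO_K/\fq\cO_K$ as the Galois Frobenius $\sigma_\fq$, is correct and gives a pleasant shortcut to the unramified local factors; the paper instead writes down an explicit matrix for $T+\tau$ on each $\chi$-component and computes its characteristic polynomial directly.

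There is, however, a genuine gap at the ramified prime. You assert that ``$\Delta$ acts trivially on the residue field $\cO_K/P\cO_K=A/P$'', but $\cO_K/P\cO_K$ is not the residue field: since $P$ is totally ramified one has $P\cO_K=\fP^{q^d-1}$, and $\cO_K/P\cO_K\cong(A/P)[\epsilon]/(\epsilon^{q^d-1})$ with $\Delta$ acting nontrivially (via $\omega$ on $\epsilon$). In particular $e_\chi(\cO_K/P\cO_K)$ is a nonzero rank-one $A/P$-module for \emph{every} $\chi$, and one must actually compute $\Fitt e_\chi\C(\cO_K/P\cO_K)$. Your Frobenius shortcut is unavailable here because on this non-reduced ring $\tau^d$ is no longer a Galois automorphism: it kills the nilpotent $\epsilon$. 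The paper handles this case by checking directly that $\tau^d$ acts as $0$ on the $e_\chi$-component for $\chi\neq 1$ and as the identity for $\chi=1$, whence the characteristic polynomial of $T+\tau$ on the $\chi$-component is $P-\chi(P)$ with $\chi(P)$ defined as in \ref{infL}. Your stated conclusion about the local factor at $P$ happens to be correct, but the reasoning you give does not establish it.
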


This is an equivariant refinement of a special case of the class number formula of \cite{TAE2}, and our proof (see section \ref{secCNF}) follows closely the argument of \emph{loc.~cit.}

\bigskip

\begin{void}\label{oddeven}
For our further results we need to split $\H(\cO_K)$ into an ``odd'' and an ``even'' part, which we now define. Note that we have $\bF_q^\times \subset \Delta=(A/PA)^\times$. Let $M$ be an $A[\Delta]$-module. Let
$e^- \in A[\bF_q^\times]$ be the idempotent corresponding to the tautological character $\bF_q^\times \to \bF_q^\times, x \mapsto x$. We define the \emph{odd} part of $M$ as 
\[
	M^- := e^- M
\]
and the \emph{even} part of $M$ as 
\[
	M^+ := (1-e^-) M.
\]
Clearly we have $M = M^+ \oplus M^-$ for every $A[\Delta]$-module $M$. Correspondingly the ring $A[\Delta]$ factors as $A[\Delta]^+ \times A[\Delta]^-$.

The subgroup $\bF_q^\times$ of $\Delta$ is the decomposition group at $\infty$ in $K/k$, and as such it is analogous to the subgroup generated by complex conjugation in the Galois group of a cyclotomic extension of $\bQ$. Our use of the terms ``odd'' and ``even'' is motivated by this analogy.
\end{void}

\begin{void}
Similarly, if $F$ is a field extension of $\bF_q$ and $\chi\colon \Delta \to F^\times$ a homomorphism then we say that $\chi$ is \emph{odd} if $\chi$ restricts to the identity map on $\bF_q^\times \subset \Delta$, and \emph{even} otherwise. If $F$ contains a field of $q^d$ elements and $M$ is an $A[\Delta]$-module then we have a decomposition of $F\otimes_{\bF_q}\! A$-modules
\[
	F\otimes_{\bF_q}\! M = \bigoplus_{\chi\colon \Delta\to F^\times} e_\chi (F\otimes_{\bF_q}\! M).
\]
where $\chi$ ranges over all homomorphisms and where $e_\chi \in (F\otimes A)[\Delta]$ denotes the idempotent associated to $\chi$.  The submodules $F\otimes_{\bF_q} M^+$ and $F\otimes_{\bF_q} M^-$ of $F\otimes_{\bF_q} M$ are obtained by restricting the direct sum to even or odd $\chi$ respectively.
\end{void}

\bigskip

\begin{void}\label{B1s}
We now consider the odd part $\H(\cO_K)^-$. We will give a formula for the Fitting ideal of the $A[\Delta]$-module  $\H(\cO_K)^-$ similar to the theorem of Mazur-Wiles \cite[p.~216, Theorem 2]{MW} relating the $p$-part of the class group of $\bQ({\zeta_p})$ to generalized Bernoulli numbers. However, we give a full description of the Fitting ideal, not only of its $P$-part. 

As stated above, $\cO_K$ is free of rank one as an $A[\Delta]$-module. Let 
$\eta$ be a generator of $\cO_K$ as an $A[\Delta]$-module and let $\lambda \in \cO_K$ be a non-zero $P$-torsion element of $\C(\cO_K)$. Let $F$ be a field containing $\bF_q$ and $\chi\colon \Delta \to F^\times$ a homomorphism. Then there is a unique
$B_{1,\chi} \in F \otimes_{\bF_q} k$ such that
\[
	e_\chi (1\otimes\lambda^{-1}) = B_{1,\chi} e_\chi (1\otimes \eta)
\]
in $F\otimes_{\bF_q} K$. Note that $B_{1,\chi}$ depends on the choice of $\lambda$ and $\eta$, but only up to a scalar in $F^\times$. In \S \ref{secnormalbasis} we will single out for each $\chi$ a particular $B_{1,\chi}$, independent of choices. We will call these \emph{generalized Bernoulli-Carlitz numbers}. They relate to the Bernoulli-Carlitz numbers (see \ref{defBCprime} and \ref{defBCn}) in the same way the generalized Bernoulli numbers relate to the usual Bernoulli numbers.
\end{void}

\begin{mainthm}\label{thmodd}
Let $F$ be a field containing $\bF_q$ and let $\chi\colon \Delta \to F$ be an odd character. Consider the ideal $I = \Fitt e_\chi (F \otimes_{\bF_q} \H(\cO_K))$ in $F\otimes_{\bF_q}\!A$. Then
\begin{enumerate}
\item $I=(1)$ if $\chi=1$ (and then $q=2$);
\item $I=( (1\otimes T-\chi(T)\otimes1) B_{1,\chi^{-1}})$ if $\chi$ extends to a ring homomorphism $A/PA \to F$;
\item $I=(B_{1,\chi^{-1}})$ otherwise.
\end{enumerate}
\end{mainthm}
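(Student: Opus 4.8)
The plan is to deduce Theorem~B from the equivariant class number formula (Theorem~A) by decomposing into $\chi$-isotypic pieces; the substantial remaining content is an explicit determination of the odd part of the unit module $\U(\cO_K)$, together with the Carlitz-type analytic formula for $L(1,\chi)$.

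Fix a field $F$ containing $\bF_{q^d}$, so that every character of $\Delta$ is defined over $F$. Base changing Theorem~A along $A[\Delta]\to(F\otimes_{\bF_q}\!A)[\Delta]$ and applying the idempotent $e_\chi$, and using that Fitting ideals commute with base change and with the idempotent decomposition of the group ring, that $\cO_K$ and $\U(\cO_K)$ are free of rank one over $A[\Delta]$, and that $e_\chi(F\otimes\H(\cO_K))=e_\chi(F\otimes\H(\cO_K)^-)$ for odd $\chi$, Theorem~A becomes the identity of rank-one $F\otimes_{\bF_q}\!A$-submodules of $e_\chi(F\otimes_{\bF_q}\!K_\infty)$
\[
	L(1,\chi)\cdot e_\chi(1\otimes\eta)\cdot(F\otimes_{\bF_q}\!A)\;=\;I\cdot e_\chi\bigl(F\otimes_{\bF_q}\!\U(\cO_K)\bigr),
\]
with $I=\Fitt\,e_\chi(F\otimes_{\bF_q}\!\H(\cO_K))$ the ideal we must identify. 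Everything thus reduces to describing the fractional ideal $e_\chi(F\otimes\U(\cO_K))$ relative to $e_\chi(1\otimes\eta)$, together with a usable formula for $L(1,\chi)$.

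For the unit module I would analyse $\U(\cO_K)=\exp_\C^{-1}(\C(\cO_K))$ via the period lattice $\Lambda:=\ker(\exp_\C\colon K_\infty\to\C(K_\infty))$. As $\exp_\C$ is surjective onto $\C(K_\infty)$ and $\C[P]\subseteq\C(\cO_K)$, a preimage $\rho\in K_\infty$ of the chosen torsion point $\lambda$ lies in $\U(\cO_K)$, and the $A[\Delta]$-submodule $M_0$ generated by $\rho$ and $\Lambda$ satisfies $\exp_\C(M_0)=\C[P]$ and $\ker(\exp_\C|_{M_0})=\Lambda$. Since $\infty$ is the ``odd'' place — its decomposition group $\bF_q^\times$ acts on the local period by the tautological character — the component $e_\chi(F\otimes\Lambda)$ is nonzero precisely for odd $\chi$, where it is free of rank one. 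Feeding this into $0\to e_\chi(F\otimes\Lambda)\to e_\chi(F\otimes M_0)\to e_\chi(F\otimes\C[P])\to0$, and using that $e_\chi(F\otimes\C[P])$ equals $(F\otimes A)/(1\otimes T-\chi(T)\otimes1)$ when $\chi$ extends to a ring homomorphism $A/PA\to F$ and vanishes otherwise, shows that $e_\chi(F\otimes M_0)$ is the $\Lambda$-lattice in the latter case and its enlargement by exactly one Euler factor $(1\otimes T-\chi(T)\otimes1)$ in the former. The decisive point is then to prove $e_\chi(F\otimes\U(\cO_K))=e_\chi(F\otimes M_0)$ for every odd $\chi\neq1$, equivalently that the odd part of $\exp_\C(\U(\cO_K))/\C[P]$ vanishes away from the trivial character. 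This is the main obstacle, and is the function field counterpart of the fact that, away from the Teichm\"uller component, the logarithms of the cyclotomic units exhaust the unit lattice; proving it requires the explicit ramification of $K/k$ at $P$ and at $\infty$ and an exact sequence describing $\U(\cO_K)$. The character $\chi=1$ is odd only for $q=2$, is degenerate, and gives $I=(1)$; it is handled separately (e.g.\ by comparing $L(1,\mathbf{1})$ with the index at the trivial character, where the class module contributes nothing).

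Finally I would identify the ideal obtained with $(B_{1,\chi^{-1}})$, times $(1\otimes T-\chi(T)\otimes1)$ in case~(2). The previous paragraph expresses $e_\chi(F\otimes\U(\cO_K))$ in terms of $e_\chi(1\otimes\lambda)$, using $\exp_\C(\rho)=\lambda$ and the fact that $\lambda/\rho=\exp_\C(\rho)/\rho$ lies in $k_\infty$, so that passing between $\rho$ and $\lambda$ only changes the generator by an $\infty$-adic scalar. Substituting into the $\chi$-component of Theorem~A and invoking the Carlitz-type analytic class number formula for $L(1,\chi)$ — which expresses $L(1,\chi)$ through $B_{1,\chi^{-1}}$ (note the switch from $\chi$ to $\chi^{-1}$, coming from a Gauss-sum/duality pairing between the $\chi$- and $\chi^{-1}$-components of $F\otimes K$) and through exactly the period factors appearing above, so that these cancel — yields $I=(B_{1,\chi^{-1}})$ in case~(3) and $I=((1\otimes T-\chi(T)\otimes1)B_{1,\chi^{-1}})$ in case~(2). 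Here $B_{1,\chi^{-1}}$ is the element defined in \S\ref{B1s} by $e_{\chi^{-1}}(1\otimes\lambda^{-1})=B_{1,\chi^{-1}}\,e_{\chi^{-1}}(1\otimes\eta)$, and since only the ideal it generates enters, the ambiguity in the choices of $\lambda$ and $\eta$ is harmless; the normalisation of the generalized Bernoulli--Carlitz numbers carried out in \S\ref{secnormalbasis} is precisely what makes the analytic formula, hence this last comparison, clean.
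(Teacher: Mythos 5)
Your strategy mirrors the paper's exactly: decompose the equivariant class number formula (Theorem~A) into $\chi$-components, compute the fractional ideal $e_\chi(F\otimes_{\bF_q}\U(\cO_K))$ relative to $e_\chi(F\otimes_{\bF_q}\cO_K)$ by analysing the period lattice $\Lambda$ and the Carlitz torsion, and then use an $\infty$-adic formula for $L(1,\chi)$ in terms of $B_{1,\chi^{-1}}$ and the period to read off $I$. The skeleton is right, but the two load-bearing steps are left as assertions, and one of them is mischaracterized as a hard open problem.

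You call the identity $e_\chi(F\otimes\U(\cO_K))=e_\chi(F\otimes M_0)$ for odd $\chi\neq1$ ``the main obstacle,'' suggesting it requires detailed ramification data and an unspecified exact sequence. In the paper this is Lemma~\ref{lemmaminustorsion}, proved by a short rank count: since $\U(\cO_K)$ is $A[\Delta]$-free of rank one and $\Lambda^+=0$, the modules $\Lambda$ and $\U(\cO_K)^-$ have the same $A$-rank and hence $\cU^-$ is $A$-torsion; since $\cU^+$ is torsion-free this gives $\cU^-=\cU_\tors$; and $\C(\cO_K)_\tors\subset\cU$ is already \cite[Prop.~2]{TAE1}. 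Combined with the explicit computation of $\C(\cO_K)_\tors$ in \ref{carlitztorsion}, this yields the exact sequence $0\to\Lambda\to\U(\cO_K)^-\to\C(K)_\tors\to0$ which handles both $q>2$ and $q=2$ (where the extra $A/(T^2+T)$-factor of the torsion sits only in the trivial $\chi$-component, as you note). So this step is not an obstruction once the input from \cite{TAE1} is cited; what is missing is that citation, not a new ramification argument. The second gap is the analytic input you invoke but do not supply: the formula $L(1,\chi)=(1\otimes\bar\pi/P)\,B_{1,\chi^{-1}}\,\tau(\chi^{-1})$ (Proposition~\ref{B1chiformula}), proved by differentiating the product expansion of $\exp_\C$ logarithmically and specializing at $X=(b/P)\bar\pi$, together with Proposition~\ref{Bbar}, whose proof requires the Gauss--Thakur relation $\tau(\chi)\tau(\chi^{-1})=(-1)^d\otimes P$ and the fact that $\Lambda$ is generated over $A[\Delta]^-$ by $\bar\pi_v$. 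Your remark that $\lambda/\rho\in k_\infty$ correctly indicates why only a scalar discrepancy should occur, but establishing that the ideals actually coincide needs these identities. Similarly, the $q=2$, $\chi=1$ case is not merely ``degenerate'': the paper derives $I=(1)$ from the explicit values $B_{1,1}=(P+1)/(T^2+T)$ and $L(1,1)=1\otimes\bar\pi/(T^2+T)$, a computation that must be carried out.
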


In the first case ($q=2$ and $\chi=1$), we have $B_{1,\chi^{-1}}=(P+1)/(T^2+T)$, see \ref{B11}.

\begin{void}\label{defBCprime}
For all non-negative integers $n$ we define $\BC'_n \in k$ by the power series identity
\[
	\frac{X}{\exp_\C X} = \sum_{n\geq 0} \BC'_n X^n.
\]
These $\BC'_n$ are (up to a normalisation factor, see also \ref{defBCn}) the \emph{Bernoulli-Carlitz numbers} introduced by Carlitz, who related them to certain Goss zeta values. In \S \ref{secodd} we establish congruences relating the $B_{1,\chi^{-1}}$ and $\BC'_n$ and use these to obtain a new proof of the analogue of the \emph{Herbrand-Ribet theorem} established in \cite{TAE3}:
\begin{mainthm}\label{mainthmHR}
Let $\omega\colon \Delta \to (A/PA)^\times$ be the tautological character. Let $1< n < q^d-1$ be divisible by $q-1$. Then
\[
	e_{\omega^{1-n}} ((A/PA) \otimes_A \H(\cO_K)) \neq 0
\]
if and only if $ v_P(\BC'_n) > 0$.
\end{mainthm}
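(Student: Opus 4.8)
The plan is to deduce Theorem~\ref{mainthmHR} by combining the Mazur--Wiles-type formula of Theorem~\ref{thmodd} with a congruence relating the generalized Bernoulli--Carlitz number $B_{1,\omega^{n-1}}$ to the Bernoulli--Carlitz number $\BC'_n$ of~\ref{defBCprime}.

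First I would apply Theorem~\ref{thmodd} with $\chi:=\omega^{1-n}$ and $F:=A/PA$. Since $(q-1)\mid n$, the restriction of $\omega^{1-n}$ to $\bF_q^\times$ is $x\mapsto x^{1-n}=x$, so $\chi$ is odd; and since $0<n-1<q^d-1$ we have $q^d-1\nmid n-1$, so $\chi\neq1$. Thus Theorem~\ref{thmodd} applies (its case~(1) being excluded) and yields
\[
	\Fitt_R\, e_{\omega^{1-n}}\big(F\otimes_{\bF_q}\H(\cO_K)\big)=\mathfrak a,\qquad R:=F\otimes_{\bF_q}A\cong\bF_{q^d}[T],
\]
with $\mathfrak a=(B_{1,\omega^{n-1}})$, or $\mathfrak a=\big((1\otimes T-\chi(T)\otimes1)B_{1,\omega^{n-1}}\big)$ in case $\omega^{1-n}$ extends to a ring homomorphism $A/PA\to F$. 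Next I would translate this into the statement of the theorem. Write $\theta$ for the image of $T$ in $A/PA=F$ and put $\fp:=(1\otimes T-\theta\otimes1)$, a maximal ideal of $R$ with $R/\fp=A/PA$. Because $F\otimes_{\bF_q}\H(\cO_K)=R\otimes_A\H(\cO_K)$, its reduction modulo $\fp$ is $(A/PA)\otimes_A\H(\cO_K)$, $\Delta$-equivariantly, whence
\[
	e_{\omega^{1-n}}\big((A/PA)\otimes_A\H(\cO_K)\big)\cong e_{\omega^{1-n}}\big(F\otimes_{\bF_q}\H(\cO_K)\big)\big/\fp ,
\]
and for a finite module over the principal ideal domain $R$ this is nonzero precisely when $\fp\mid\mathfrak a$. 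The possible extra factor is prime to $\fp$: if $\omega^{1-n}$ extends to a ring homomorphism it is a Frobenius power $x\mapsto x^{q^i}$ with $0\le i<d$, so $\chi(T)=\theta^{q^i}$, and $\theta^{q^i}=\theta$ would force $q^d-1\mid n$, which is excluded. Since $\mathfrak a$ is an ideal of $R$, the element $B_{1,\omega^{n-1}}$ is regular at $\fp$, and we conclude
\[
	e_{\omega^{1-n}}\big((A/PA)\otimes_A\H(\cO_K)\big)\neq0\iff B_{1,\omega^{n-1}}\text{ reduces to }0\text{ in }A/PA .
\]

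It then remains to prove the congruence: $B_{1,\omega^{n-1}}$ reduces to $0$ in $A/PA$ if and only if $v_P(\BC'_n)>0$. This is the function-field analogue of the classical $B_{1,\omega^{k-1}}\equiv B_k/k\pmod p$. I would use the normalization of $B_{1,\chi}$ fixed in~\S\ref{secnormalbasis} together with its defining relation $e_\chi(1\otimes\lambda^{-1})=B_{1,\chi}e_\chi(1\otimes\eta)$: writing $\lambda=\exp_\C(\varpi)$ for a corresponding Carlitz logarithm and expanding $\lambda^{-1}=\sum_{m\ge0}\BC'_m\varpi^{m-1}$ via the series $X/\exp_\C X=\sum_{m\ge0}\BC'_m X^m$, then projecting to the $\omega^{1-n}$-component and reducing at the prime above $P$, should exhibit the reduction of $B_{1,\omega^{n-1}}$ as a nonzero scalar multiple of $\BC'_n\bmod P$. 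One also notes that $\BC'_n$ is $P$-integral in this range: by the Carlitz analogue of von Staudt--Clausen, $P$ can divide the denominator of a Bernoulli--Carlitz number $\BC'_n$ only if $q^d-1\mid n$, which is impossible for $0<n<q^d-1$. Hence $v_P(\BC'_n)>0\iff\BC'_n\equiv0\pmod P$, and combined with the previous paragraph this proves the theorem.

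The main obstacle is this last congruence. Relating the ``arithmetic'' quantity $B_{1,\omega^{n-1}}$ to the ``analytic'' quantity $\BC'_n$ requires a careful analysis, at the prime above $P$, of the Carlitz torsion point $\lambda$ and of the chosen normal basis of $\cO_K$ over $A[\Delta]$, and precise bookkeeping with the normalizations; everything preceding it is a formal consequence of Theorem~\ref{thmodd}. It is also here that the bound $n<q^d-1$ is genuinely used, so that the congruence detects non-vanishing faithfully rather than yielding only one implication.
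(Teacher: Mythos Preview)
Your strategy coincides with the paper's: reduce via Theorem~\ref{thmodd} to whether $B_{1,\omega^{n-1}}\equiv 0\pmod P$, then establish a congruence making this equivalent to $v_P(\BC'_n)>0$. Your deduction from Theorem~\ref{thmodd} is correct and in fact more explicit than the paper's (which simply passes to $A_P$ and reads off $\length_{A_P}e_{\omega^{1-n}}(A_P\otimes_A\H(\cO_K))=v_P(B_{1,\omega^{n-1}})$).

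For the congruence, however, your sketch hides a genuine obstacle beyond bookkeeping: the ``Carlitz logarithm'' $\varpi$ of $\lambda$ you invoke is the $\infty$-adic element $\bar\pi/P$, which does not admit a reduction at the prime above $P$, while no honest $P$-adic logarithm of $\lambda$ exists since $\exp_\C$ fails to converge on all of $\fm$. The paper circumvents this with a \emph{truncated} exponential $\overline{\exp}_\C\colon\fm/\fm^{q^d}\isomto\C(\fm)/\C(\fm^{q^d})$ (well-defined because the first $d$ coefficients of $\exp_\C$ are $P$-integral), takes $\beta\in\fm$ lifting the preimage of $\lambda$, and expands $\lambda^{-1}$ in powers of $\beta$ via the truncated generating series for the $\BC'_m$. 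The other ingredient you are missing is Thakur's congruence $\tau(\omega^m)\equiv\lambda^m/\Pi(m)\pmod{\fm^{m+1}}$ for the Gauss--Thakur sums: this is what lets one compare $e_{\omega^{n-1}}(1\otimes\lambda^{-1})=B_{1,\omega^{n-1}}\tau(\omega^{n-1})$ with the relevant power of $\beta$ and conclude $B_{1,\omega^{n-1}}\equiv\frac{\Pi(n-1)}{\Pi(n)}\BC_n\pmod P$.
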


\end{void}

\bigskip

We have no complete description of the Fitting ideal of the even part $\H(\cO_K)^+$, but we give a kind of $P$-adic class number formula involving the $P$-part of $\H(\cO_K)^+$. To state the theorem we need a $P$-adic version of the module $\cU$.

\begin{void}
By \ref{preelt} there is a unique prime of $K$ above $P\in A$. Let $\cO_{K,P}$ be the completion of $\cO_K$ at this unique prime.
Let $\fm$ be the maximal ideal of $\cO_{K,P}$. For every $N\geq 0$ the subgroup $\fm^N$ is
 stable under the Carlitz $A$-action and we denote the resulting $A$-module by $\C(\fm^N) \subset \C(\cO_{K,P})$.  Now assume that $N\geq 2$. Under this assumption, we show
in Proposition \ref{propPexp} that the $A$-action on $\C(\fm^N)$ extends uniquely to a continuous
$A_P$-module structure on $\C(\fm^N)$, and that the resulting $A_P$-module is torsion-free.

In fact, also  $\C(\fm)$ (but not $\C(\cO_{K,P})$) has a  natural $A_P$-module-structure. However, $\C(\fm)$ is not torsion-free, and since the presence of torsion would slightly complicate some of our statements, we decided to work with $\C(\fm^2)$ everywhere.
\end{void}

\begin{void}
Denote by $\cU_2$ the intersection $\cU \cap \C(\fm^2)$ and by $\widebar{\cU_2}$ its
topological closure inside $\C(\fm^2)$. Then $\widebar{\cU_2}$ is a sub-$A_P[\Delta]$-module of $\C(\fm^2)$, which is free over $A_P$.
\end{void}

\begin{void}\label{Padicidempotents}
The reduction map $A_P \to A/PA$ has a unique section which is a ring homomorphism, giving $A_P$ the structure of an $A/PA$-algebra. In particular, every $A_P[\Delta]$-module $M$ decomposes as
\[
	M = \bigoplus_\chi e_\chi M
\]
where $\chi$ runs over all homomorphisms $\chi\colon \Delta \to A_P^\times$, and where $e_\chi\in A_P[\Delta]$ is the idempotent associated to $\chi$. We call a homomorphism $\chi\colon \Delta \to A_P^\times$ \emph{odd} if its restriction to $\bF_q^\times$ is the inclusion map $\bF_q^\times \to A_P^\times$, and \emph{even} otherwise.
\end{void}

Using a $P$-adic Baker-Brumer theorem of Vincent Bosser (see the appendix) we show the following Leopoldt-type result:

\begin{mainthm}\label{mainthmLeopoldt} 
If $\chi\colon \Delta \to A_P^\times$ is even then $e_\chi(\C(\fm^2)/\widebar{\cU_2})$ is finite.
\end{mainthm}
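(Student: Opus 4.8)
The plan is to reduce the statement to a single equality of $A_P$-ranks and to deduce that equality from Bosser's $P$-adic analogue of the Baker--Brumer theorem. First, some bookkeeping. Since $\#\Delta=q^d-1$ is prime to $p$ and $A_P$ contains $\mu_{q^d-1}$, the idempotents $e_\chi$ split $A_P[\Delta]$ as $\prod_\chi A_P$. For $N\gg0$ the $P$-adic Carlitz exponential gives a $\Delta$-equivariant $A_P$-linear isomorphism $\fm^N\isomto\C(\fm^N)$, and since $\cO_{K,P}$ is free of rank one over $k_P[\Delta]$ by the normal basis theorem ($k_P$ denoting the fraction field of $A_P$), each $e_\chi\C(\fm^N)$ is free of rank one over $A_P$. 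By Proposition \ref{propPexp} the module $\C(\fm^2)$ is $A_P$-torsion free and contains $\C(\fm^N)$ with finite index, so $e_\chi\C(\fm^2)$ is free of rank one over $A_P$ for every $\chi$; in particular $\C(\fm^2)/\widebar{\cU_2}$ is a finitely generated $A_P$-module. Applying the exact functor $M\mapsto e_\chi M$ to $0\to\widebar{\cU_2}\to\C(\fm^2)\to\C(\fm^2)/\widebar{\cU_2}\to0$ then shows that $e_\chi(\C(\fm^2)/\widebar{\cU_2})$ is finite if and only if $\rk_{A_P}e_\chi\widebar{\cU_2}=1$.

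Next I would compute the ranks involved. Recall that $\cU=\exp_\C\U(\cO_K)$ with $\U(\cO_K)$ free of rank one over $A[\Delta]$, and that the kernel of $\exp_\C$ on $K_\infty$ is the Carlitz period lattice $\Lambda$. Because the decomposition group at $\infty$ is $\bF_q^\times\subseteq\Delta$ and acts on the Carlitz period $\widetilde\pi$ through the tautological character, $\Lambda\otimes\overline{\bF_q}$ is a sum of odd characters; hence $e^+\cU=e^+\U(\cO_K)$ is free of rank one over $A[\Delta]^+$, while $e^-\cU$ is finite. Therefore $\cU$ has $A$-rank $r:=\#\{\chi\text{ even}\}$, and its finite-index submodule $\cU_2=\cU\cap\C(\fm^2)$, lying inside the torsion-free module $\C(\fm^2)$, is $A$-torsion free of rank $r$. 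Moreover $e_\chi\widebar{\cU_2}=0$ whenever $\chi$ is odd, because it is the image, inside the torsion-free module $e_\chi\C(\fm^2)$, of the torsion module $e_\chi(\cU_2\otimes_A A_P)$. Consequently $\rk_{A_P}\widebar{\cU_2}=\sum_{\chi\text{ even}}\rk_{A_P}e_\chi\widebar{\cU_2}\le r$, and it remains only to prove $\rk_{A_P}\widebar{\cU_2}=r$: then every even summand equals $1$ and the theorem follows.

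To prove this equality I would choose an $A$-basis $x_1,\dots,x_r$ of $\cU_2$ and pick $M\gg0$ so that the $P$-adic Carlitz logarithm $\log_\C$ converges on each Carlitz multiple $\C_{P^M}x_i$. Since $\C_{P^M}$ is injective on the torsion-free module $\widebar{\cU_2}$ and $\log_\C$ restricts to an $A_P$-isomorphism $\C(\fm^N)\isomto\fm^N$ (for suitable $N$), the rank $\rk_{A_P}\widebar{\cU_2}$ equals the $A_P$-rank of the module generated by $\log_\C\C_{P^M}x_1,\dots,\log_\C\C_{P^M}x_r$, that is, $\dim_{k_P}\sum_i k_P\,\log_\C\C_{P^M}x_i$. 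Now the points $\C_{P^M}x_i$ lie in $\C(\cO_K)$ and are $A$-linearly independent in $\C(\overline{k})$ modulo torsion, since $\cU_2\hookrightarrow\C(\overline{k})$ is injective, $\cU_2$ is torsion free, and $x_1,\dots,x_r$ is an $A$-basis. Bosser's theorem therefore shows that $\log_\C\C_{P^M}x_1,\dots,\log_\C\C_{P^M}x_r$ are linearly independent over $\overline{k_P}$, so the displayed dimension is $r$ and $\rk_{A_P}\widebar{\cU_2}=r$.

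The main obstacle is this final step. The splitting of $A_P[\Delta]$, the freeness of the $e_\chi\C(\fm^2)$, and the oddness of the Carlitz period lattice are formal or classical, and the reduction shows that the only substantial input required is a $P$-adic linear independence of logarithms of algebraic points on the Carlitz module. Checking that the hypotheses of Bosser's theorem apply to the points $\C_{P^M}x_i$ — this is exactly why one must first push the elements of $\cU_2$ deep into $\fm$ and pass to $\log_\C$ — and then invoking it to convert the $\infty$-adic input ``$\U(\cO_K)$ is $A[\Delta]$-free of rank one'' into the desired $P$-adic conclusion about ranks of logarithms, is the heart of the matter.
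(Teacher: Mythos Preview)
Your reduction to the rank equality $\rk_{A_P}\widebar{\cU_2}=r$ is fine and matches the paper's strategy. The gap is in the last step. Bosser's $P$-adic Baker--Brumer theorem says that Carlitz logarithms of algebraic points which are $k$-linearly independent remain linearly independent over $\bar k$, the algebraic closure of $k$ --- not over $k_P$, let alone $\overline{k_P}$. Since $k_P$ contains elements transcendental over $k$, $\bar k$-linear independence of the $\eta_i$ does not force $\dim_{k_P}\sum_i k_P\,\eta_i=r$: a hypothetical $A_P$-linear relation $\sum a_i x_i=0$ in $\widebar{\cU_2}$ could have genuinely transcendental coefficients $a_i\in A_P$, and Bosser is silent about such relations. (Incidentally, the detour through $\C_{P^M}$ is unnecessary: by Proposition~\ref{propPexp} the map $\exp_{\C,P}\colon\fm^2\to\C(\fm^2)$ is already an $A_P[\Delta]$-isomorphism, so $\eta_i:=\exp_{\C,P}^{-1}(x_i)$ makes sense directly.)

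The paper closes exactly this gap by exploiting the $\Delta$-structure you have already set up. Rather than computing the total rank, it shows $e_\chi\widebar{\cU_2}\neq0$ for each even $\chi$ individually. Since $e_\chi(A_P\otimes_A\cU_2)$ is free of rank one over $A_P$, one picks a nonzero element $x\in e_\chi\big((A/PA)\otimes_{\bF_q}\cU_2\big)$ and writes $x=\sum_i\beta_i(1\otimes u_i)$ with $\beta_i\in(A/PA)\otimes_{\bF_q}A$. The crucial point is that the images of the $\beta_i$ in $K_P$ are \emph{algebraic} over $k$, so Bosser does apply and gives $\sum_i\beta_i\eta_i\neq0$ in $\fm^2$; applying $\exp_{\C,P}$ then produces a nonzero element of $e_\chi\widebar{\cU_2}$. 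Put differently: the only $A_P$-relations you actually need to exclude are those coming from the idempotents $e_\chi$, and these have algebraic coefficients by construction. Your argument is easily repaired along these lines, but as written the invocation of Bosser for $\overline{k_P}$-independence is not valid.
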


Our main result regarding the even part of $\H(\cO_K)$ is the following theorem.

\begin{mainthm}\label{thmeven}
Let $\chi\colon \Delta \to A_P^\times$ be even. Then $L_P(1,\chi)\neq 0$ and
\[
	\length_{A_P} e_\chi \left( A_P\otimes_A \H(\cO_K)\right)  +
	\length_{A_P} e_\chi \frac{\C(\fm^2)}{\widebar{\cU_2}} = v_P( L_P(1,\chi) ).
\]
\end{mainthm}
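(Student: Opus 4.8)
The plan is to derive Theorem~\ref{thmeven} by combining the equivariant class number formula (Theorem~\ref{thmCNF}) with a careful $P$-adic analysis of the lengths of the $\chi$-components on the even side. First I would apply the $\chi$-idempotent (for an even $\chi\colon\Delta\to A_P^\times$) to the identity $L(1,\Delta)\cdot\cO_K = \Fitt_{A[\Delta]}\H(\cO_K)\cdot\U(\cO_K)$ of Theorem~\ref{thmCNF}, after base change to $A_P$. Since $\cO_K$ and $\U(\cO_K)$ are each free of rank one over $A[\Delta]$, their $\chi$-components $e_\chi(A_P\otimes\cO_K)$ and $e_\chi(A_P\otimes\U(\cO_K))$ are free of rank one over $e_\chi A_P[\Delta]\cong A_P$, so the equation becomes a statement about a single element: $e_\chi L(1,\Delta)$ generates $e_\chi\Fitt\H(\cO_K)$ times the ratio of two lattice generators, all inside $e_\chi(K_\infty\otimes_{k_\infty}\text{(something $P$-adic)})$. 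The main conceptual point is to reconcile the $\infty$-adic incarnation of $L(1,\Delta)$ with the $P$-adic $L$-value $L_P(1,\chi)$.

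The bridge between the two is the Carlitz exponential and its $P$-adic counterpart. I would use the map $\exp_\C$ together with the $P$-adic structure on $\C(\fm^2)$ from Proposition~\ref{propPexp}: the module $\widebar{\cU_2}$ is the $P$-adic closure of $\cU_2 = \cU\cap\C(\fm^2)$, where $\cU=\exp_\C\U(\cO_K)$. Passing through $\exp_\C$ turns the additive lattice $\U(\cO_K)\subset K_\infty$ into the multiplicatively-flavoured module $\cU\subset\C(\cO_K)$, and on $\chi$-components (for even $\chi$, so that Theorem~\ref{mainthmLeopoldt} applies and the relevant quotient is finite) this is an isomorphism of rank-one $A_P$-modules up to finite index. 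The key local computation is to identify $v_P$ of the $\chi$-component of $L(1,\Delta)$, after transporting it through $\exp_\C$, with $v_P(L_P(1,\chi))$; this should follow from comparing the defining series \eqref{eqinfL} and \eqref{eqPL}, grouping the monic polynomials by degree, and using that the $\infty$-adic unit part contributes nothing to the $P$-adic valuation while the ``interpolation'' of $\sum_{a}\chi(a)/a$ at $P$ recovers $L_P(1,\chi)$ up to a $P$-adic unit. The non-vanishing $L_P(1,\chi)\neq 0$ then falls out either from this comparison plus the nonvanishing of $L(1,\chi)$ (which is built into $L(1,\Delta)\in k_\infty[\Delta]^\times$), or directly from Theorem~\ref{mainthmLeopoldt} forcing the relevant index to be finite, hence forcing the generator to be nonzero.

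Concretely the length equation is obtained as follows. After applying $e_\chi$ to the Theorem~\ref{thmCNF} identity and measuring everything by $v_P$, the left side reads $v_P(e_\chi L(1,\Delta)) = v_P(L_P(1,\chi))$ (up to the interpolation comparison above). On the right side, $v_P$ of the generator of $e_\chi\Fitt_{A[\Delta]}\H(\cO_K)$, computed in $A_P$, equals $\length_{A_P}e_\chi(A_P\otimes_A\H(\cO_K))$ because for a module of finite length over the DVR $A_P$ (or a finite product of such) the Fitting ideal has colength equal to the length; and $v_P$ of the ratio between the chosen generator of $e_\chi\U(\cO_K)$ and that of $e_\chi\cO_K$, once transported via $\exp_\C$ into $\C(\fm^2)$, is exactly $\length_{A_P}e_\chi(\C(\fm^2)/\widebar{\cU_2})$, using that $\C(\fm^2)$ is the natural rank-one $A_P$-lattice containing $\widebar{\cU_2}$ and that the $\cO_K$-generator maps to a generator of $\C(\fm^2)$ up to a $P$-adic unit. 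Adding these gives the stated formula.

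The hard part will be the precise bookkeeping in the exponential transport step: one must show that $\exp_\C$ converges $P$-adically on $\fm^2$, that it carries the $A_P$-lattice generated (over $A_P[\Delta]$) by a generator of $\U(\cO_K)$ onto $\widebar{\cU_2}$ with controlled (indeed trivial, on $\chi$-components, after Theorem~\ref{mainthmLeopoldt}) discrepancy, and that no spurious powers of $P$ are introduced by the identifications $e_\chi(A_P\otimes\cO_K)\cong A_P$ and the choice of the $P$-torsion normalisation. Equally delicate is pinning down the exact relation between $e_\chi L(1,\Delta)$ and $L_P(1,\chi)$: one has to check that the ``Euler factor at $P$'' discrepancy between the two $L$-values is a $P$-adic \emph{unit} for even $\chi$ (so it does not affect $v_P$), which is where the evenness hypothesis and the fact that $\chi$ is ramified-free at $\infty$ but the relevant series is taken over all monic $a$ enters. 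Once these two comparisons are in hand, the theorem is a formal consequence of Theorem~\ref{thmCNF}, Proposition~\ref{propPexp}, and Theorem~\ref{mainthmLeopoldt}.
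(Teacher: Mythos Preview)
Your proposal has a genuine gap at its central step: the comparison between the $\infty$-adic value $L(1,\chi)=e_\chi L(1,\Delta)$ and the $P$-adic value $L_P(1,\chi)$. These two quantities live in entirely different rings --- $L(1,\chi)\in F\otimes_{\bF_q}k_\infty$ while $L_P(1,\chi)\in A_P$ --- and there is no $P$-adic valuation on $F\otimes k_\infty$ to speak of. The element $L(1,\chi)$ is in general a transcendental element of the $\infty$-adic completion, not an element of $F\otimes k$, so phrases like ``the $\infty$-adic unit part contributes nothing to the $P$-adic valuation'' or ``the Euler factor at $P$ discrepancy is a $P$-adic unit'' have no meaning. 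Although the two $L$-values are given by formally identical Dirichlet series, one converges $\infty$-adically and the other $P$-adically, and there is no a priori relation between the two limits.

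The paper's proof supplies exactly the missing bridge, and it is not elementary: it is Anderson's log-algebraicity theorem. One introduces the module $\cL=\exp_\C\fM\subset\C(\cO_K)$ of special points. Since $\fM=L(1,\Delta)\cdot\cO_K$ (Theorem~\ref{CNF2}), the equivariant class number formula gives $\Fitt_{A[\Delta]}(\cU/\cL)=\Fitt_{A[\Delta]}\H(\cO_K)^+$, which after Leopoldt (Theorem~\ref{thmLeopoldt}) yields $\length_{A_P}e_\chi(A_P\otimes_A\H(\cO_K))=\length_{A_P}(e_\chi\widebar{\cU_2}/e_\chi\widebar{\cL_2})$. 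The crucial remaining identity $e_\chi\widebar{\cL_2}=L_P(1,\chi)\cdot e_\chi\C(\fm^2)$ (Proposition~\ref{propPAnd}) comes from Anderson's result that the \emph{algebraic} points $\exp_\C\fL_m\in\cO_K$ coincide with $\exp_{\C,P}\fL_{m,P}$ for certain $P$-adically convergent series $\fL_{m,P}$ encoding $L_P(1,\chi)$. This is the genuine $\infty$-adic/$P$-adic comparison, and it goes through the algebraic special points rather than through any direct comparison of $L$-values. Your ``transport via $\exp_\C$'' step conflates the $\infty$-adic exponential $K_\infty\to\C(K_\infty)$ with the $P$-adic exponential $\fm^2\to\C(\fm^2)$; these are different maps on different domains, and it is precisely Anderson's theorem that reconciles them on the special points.
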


We also show that $L_P(1,\chi)=0$ for odd $\chi$.

\begin{void}
An important ingredient in the proof of Theorem \ref{thmeven} is Anderon's module $\cL$ of \emph{special points} \cite{AND}. This is a finitely generated submodule of $\C(\cO_K)$, constructed through explicit generators.  It is a Carlitz module analogue of the group of circular units (or cyclotomic units) in cyclotomic number fields. We refer to section \ref{cycunits} for the definition. We denote by $\sqrt{\cL}$ its division hull in $\C(\cO_K)$, that is,
\[
	\sqrt{\cL} := \left\{ m \in \C(\cO_K) \colon \text{there is an } a \in A \setminus \{0\} \text{ such that } am \in \cL \right\}.
\]

In \S \ref{cycunits} we will show 
\end{void}

\begin{mainthm}
$\sqrt{\cL}=\cU$, the quotient\, $\cU/\cL$ is finite, and we have
\[
	\Fitt_{A[\Delta]} \cU/\cL = \Fitt_{A[\Delta]} \H(\cO_K)^+.
\]
\end{mainthm}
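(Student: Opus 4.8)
The plan is to deduce this theorem from the equivariant class number formula (Theorem~\ref{thmCNF}), the Mazur--Wiles-type formula for the odd part (Theorem~\ref{thmodd}), and Anderson's explicit description of $\cL$. First I would recall from \S\ref{cycunits} the defining generators of Anderson's module $\cL \subset \C(\cO_K)$ and compute the $A[\Delta]$-module structure of $\cL$, or rather of $\sqrt{\cL}$. The key point, which goes back to Anderson, is that $\cL$ is generated over $A[\Delta]$ by a single ``special point'' whose image under the various characters $\chi$ can be written in terms of the $L$-values $L(1,\chi)$ (up to the Bernoulli-Carlitz factors appearing in \S\ref{secnormalbasis}); this is the analogue of the classical computation of the index of the cyclotomic units. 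Comparing with the description of $\U(\cO_K) = \mathbf{U}(\cO_K)$-image $\cU$ as a free rank-one $A[\Delta]$-module, one sees that $\cL$ and $\cU$ have the same $k_\infty[\Delta]\otimes$-span, hence $\sqrt{\cL} = \cU$ and $\cU/\cL$ is finite.

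Next I would compute $\Fitt_{A[\Delta]}(\cU/\cL)$. Since $\cU$ and $\cL$ are both free of rank one over $A[\Delta]$ (the former by the discussion preceding Theorem~\ref{thmCNF}, the latter because $\cO_K$ is free of rank one and Anderson's generator exhibits $\cL$ as $\mathfrak{a}\cdot(\text{generator})$ for an ideal $\mathfrak{a}\subset A[\Delta]$), the quotient $\cU/\cL$ is cyclic and its Fitting ideal is precisely the ``index'' $[\cU:\cL]$ as an ideal of $A[\Delta]$. This index is computed character by character: for each $\chi$, the $\chi$-component of $[\cU:\cL]$ is the ratio of the $\chi$-components of generators of $\cU$ and of $\cL$ inside $F\otimes K_\infty$. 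By the explicit form of Anderson's generator this ratio is $B_{1,\chi}^{-1}L(1,\chi)$ up to units (on odd $\chi$ it involves the Bernoulli-Carlitz normalisation from \ref{B1s}). On the other hand, Theorem~\ref{thmCNF} gives $L(1,\Delta)\cdot\cO_K = \Fitt_{A[\Delta]}\H(\cO_K)\cdot\U(\cO_K)$, so $\Fitt_{A[\Delta]}\H(\cO_K)$ has $\chi$-component $L(1,\chi)\cdot(\text{unit involving }\eta,\lambda)$. Splitting into even and odd parts, and using Theorem~\ref{thmodd} to identify the odd part $\Fitt\H(\cO_K)^-$ with the ideal generated by the $(1\otimes T - \chi(T)\otimes 1)B_{1,\chi^{-1}}$ (or $B_{1,\chi^{-1}}$), the odd contributions to $L(1,\Delta)$ cancel against $\Fitt\H(\cO_K)^-$, leaving exactly $\Fitt_{A[\Delta]}\H(\cO_K)^+$ as the even part of $[\cU:\cL]$. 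On the even part, $B_{1,\chi}$ plays no role (Bernoulli-Carlitz numbers only intervene for odd $\chi$), so $[\cU:\cL]^+ = \Fitt\H(\cO_K)^+$ directly, while $[\cU:\cL]^- = (1)$ because on the odd side $\cL$ already generates $\cU$. Hence $\Fitt_{A[\Delta]}(\cU/\cL) = \Fitt_{A[\Delta]}\H(\cO_K)^+$.

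The main obstacle I anticipate is the bookkeeping of the various units and normalisation factors. One must pin down exactly how the $\chi$-component of Anderson's generator of $\cL$ compares to $L(1,\chi)$: this requires unwinding Anderson's construction of the special points via the Carlitz exponential evaluated at division values, and matching the resulting expression with the $L$-series \eqref{eqinfL}. The subtlety is that the comparison is clean only up to scalars in $F^\times$, so one has to work with the $A[\Delta]$-module generated rather than with individual elements, and verify that all the ambiguous scalars are genuine units in $A[\Delta]$ (equivalently, lie in $\bF_q^\times$ in each factor). A second, more structural, point requiring care is the claim that $\cL$ is free of rank one over $A[\Delta]$: one needs that Anderson's single generator is not a zero-divisor-supported element, i.e.\ that its annihilator vanishes, which again follows from the non-vanishing of $L(1,\chi)$ for all $\chi$ (even $\chi$ handled by the non-vanishing assertion that is part of the input, odd $\chi$ by a direct argument since the odd $L$-values are the relevant ones there). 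Once these identifications are secured, assembling the equality of Fitting ideals is a formal consequence of multiplicativity of Fitting ideals over the product ring $A[\Delta] = A[\Delta]^+\times A[\Delta]^-$ together with Theorems~\ref{thmCNF} and~\ref{thmodd}.
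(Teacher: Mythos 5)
Your plan contains a genuine error that needs to be flagged, together with an unnecessary detour and a gap on the first claim.

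The central problem is the claim that $\cU$ and $\cL$ are free of rank one over $A[\Delta]$. They are not: $\cU$ is the image of $\U(\cO_K)$ under $\exp_\C$, and the kernel of this map is the lattice $\Lambda$ of periods, which by \ref{ADeltaLambda} satisfies $\Lambda^-=\Lambda\neq 0$. Consequently $\cU^-$ is a torsion $A$-module (in fact equal to $\C(K)_\tors$ by Lemma \ref{lemmaminustorsion}), and $\cU$ cannot be free over $A[\Delta]$. The same applies to $\cL=\exp_\C(\fM)$. You have conflated $\cU$ with $\U(\cO_K)$ (and $\cL$ with $\fM$): the ``discussion preceding Theorem \ref{thmCNF}'' establishes that $\U(\cO_K)$ is free of rank one, not $\cU$. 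The fix is to observe that only $\cU^+$ and $\cL^+$ are free of rank one over $A[\Delta]^+$, precisely because $\Lambda^+=0$ (so $\exp_\C$ is injective on the plus part), and that $\cU^-/\cL^-=0$ because both equal $\C(K)_\tors$. This is what the paper does: it descends from $\U(\cO_K)^+/\fM^+$ to $\cU^+/\cL^+$ using $\Lambda^+=0$, and kills the minus part by that equality of torsion modules.

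Two further points. First, your assertion that $\sqrt{\cL}=\cU$ follows from $\cL$ and $\cU$ spanning the same $k_\infty[\Delta]$-module only yields $\cU\subset\sqrt{\cL}$ (finiteness of $\cU/\cL$). The reverse inclusion requires knowing that $\cU$ is already divisibly closed in $\C(\cO_K)$, which does not follow from spans and is a separate fact (cited from the remark after Proposition 2 of \cite{TAE1}). Second, routing the odd part through Theorem \ref{thmodd} is an unnecessary detour and risks circularity, since \ref{thmodd} is itself deduced from \ref{thmCNF} together with the material on $B_{1,\chi}$. The cleaner route, and the one the paper takes, is to combine Theorem \ref{thmCNF} with Theorem \ref{CNF2} (which identifies $\fM$ with $L(1,\Delta)\cdot\cO_K$) to get Corollary \ref{compareFitting}, namely $\Fitt_{A[\Delta]}\H(\cO_K)=\Fitt_{A[\Delta]}\U(\cO_K)/\fM$, and then take plus parts. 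No cancellation of Bernoulli--Carlitz factors against odd $L$-values is needed; the odd part of the comparison simply disappears because $\Lambda^+=0$ and $\cU^-=\cL^-$.
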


As in the classical case, we do not expect  $\cU/\cL$ and $\H(\cO_K)^+$ to be isomorphic $A[\Delta]$-modules in general.

\begin{void}
Motivated by the \emph{Kummer-Vandiver conjecture}, Anderson had conjectured \cite[\S 4.12]{AND} that the $P$-torsion of $\sqrt{\cL}/\cL$ is trivial, and we now see that this is equivalent with the statement that the $P$-torsion of $\H(\cO_K)^+$ is trivial. Recently we have constructed examples where the the latter does not hold \cite{ANG&TAE}, and we therefore conclude that also Anderson's conjecture is false. For example:
\begin{mainthm}Let $q=3$ and $P=T^9-T^6-T^4-T^3-T^2+1$ in $\bF_3[T]$. Then $\cU/\cL$ has non-trivial $P$-torsion.
\end{mainthm}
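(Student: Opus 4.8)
The plan is to deduce this from Theorem~\ref{thmeven} together with the two comparison results established earlier. First I would recall that by the final main theorem (the one identifying $\sqrt{\cL}=\cU$ and giving $\Fitt_{A[\Delta]}\cU/\cL=\Fitt_{A[\Delta]}\H(\cO_K)^+$), the module $\cU/\cL$ has non-trivial $P$-torsion if and only if $\H(\cO_K)^+$ has non-trivial $P$-torsion; and by the remark following Anderson's conjecture this is in turn equivalent to the assertion that $\H(\cO_K)^+$ has non-trivial $P$-torsion. So the task reduces to exhibiting a single prime $P$ (for $q=3$) where $\H(\cO_K)^+$ has non-zero $P$-torsion, i.e.\ where some even $\chi$-component of $(A/PA)\otimes_A\H(\cO_K)$ is non-zero.

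Next I would invoke Theorem~\ref{thmeven}: for an even character $\chi\colon\Delta\to A_P^\times$, the length of $e_\chi(A_P\otimes_A\H(\cO_K))$ is bounded above by $v_P(L_P(1,\chi))$, and it is non-zero as soon as $v_P(L_P(1,\chi))$ exceeds the length of the ``unit defect'' term $e_\chi\,\C(\fm^2)/\widebar{\cU_2}$. The cleanest route is to find a $P$ for which there is an even $\chi$ with $v_P(L_P(1,\chi))\geq 2$ while the corresponding unit term has length $\le v_P(L_P(1,\chi))-1$; in fact, since we only need the $P$-torsion of $\H(\cO_K)^+$ and not of $\H(\cO_K)$ itself, it suffices to arrange that the total right-hand side $v_P(L_P(1,\chi))$ is at least $1$ and that the class-module term carries a non-trivial contribution. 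Concretely, I would point to the computations of \cite{ANG&TAE}, where such examples are produced by a direct machine computation of the relevant $P$-adic $L$-value (equivalently, of a Bernoulli–Carlitz-type congruence), and verify that $P=T^9-T^6-T^4-T^3-T^2+1$ over $\bF_3$ is irreducible (degree $d=9$) and that the stated $\chi$-component is non-zero.

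The main obstacle is not conceptual but computational: one must actually certify the irreducibility of this particular degree-$9$ polynomial over $\bF_3$ and then certify that the $P$-adic $L$-value $L_P(1,\chi)$ (for the appropriate even $\chi$) has positive valuation strictly larger than the Leopoldt-type unit term of Theorem~\ref{mainthmLeopoldt}, so that Theorem~\ref{thmeven} forces $e_\chi(A_P\otimes_A\H(\cO_K))\ne 0$. Equivalently, via the Herbrand–Ribet-type statement Theorem~\ref{mainthmHR} and the congruences of \S\ref{secodd} relating the $B_{1,\chi}$ to the Bernoulli–Carlitz numbers $\BC'_n$, one reduces to checking $v_P(\BC'_n)>0$ for a suitable index $n$ in the even range and tracking that this index lands in the even part; this is the content imported from \cite{ANG&TAE}. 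I would therefore structure the proof as: (i) reduce, via the circular-units theorem and the remark on Anderson's conjecture, to non-triviality of the $P$-torsion of $\H(\cO_K)^+$; (ii) apply Theorem~\ref{thmeven} to translate this into a non-vanishing/valuation statement about $L_P(1,\chi)$ for an even $\chi$; (iii) cite the explicit verification in \cite{ANG&TAE} for the displayed $P$ over $\bF_3$, noting that the polynomial is irreducible so that the whole machinery applies.
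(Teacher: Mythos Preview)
Your reduction step (i) is correct and matches the paper: via Theorem~F (identifying $\sqrt{\cL}=\cU$ and $\Fitt_{A[\Delta]}\cU/\cL=\Fitt_{A[\Delta]}\H(\cO_K)^+$), non-trivial $P$-torsion in $\cU/\cL$ is equivalent to non-trivial $P$-torsion in $\H(\cO_K)^+$. After that, however, the paper's argument is much shorter than yours. It simply cites \cite{ANG&TAE} for the computational fact that, for this specific $q$ and $P$, the module $A_P\otimes_A\H(\cO_K)^+$ is non-trivial; Theorem~\ref{thmeven} plays no role in the proof of this particular statement.

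Your proposed route through Theorem~\ref{thmeven} is not so much wrong as unworkable in practice. That theorem gives
\[
\length_{A_P} e_\chi(A_P\otimes_A\H(\cO_K)) \;+\; \length_{A_P} e_\chi\,\frac{\C(\fm^2)}{\widebar{\cU_2}} \;=\; v_P(L_P(1,\chi)),
\]
so to extract the class-module term you would have to compute the unit-defect term independently, and there is no tool in the paper for doing that. The computations in \cite{ANG&TAE} do not proceed by separating these two contributions.

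Your alternative suggestion, to use Theorem~\ref{mainthmHR} and the congruences of \S\ref{secodd}, contains a genuine error. Theorem~\ref{mainthmHR} (Herbrand--Ribet) concerns the components $e_{\omega^{1-n}}$ with $n$ divisible by $q-1$; since $\omega$ restricts to the identity on $\bF_q^\times$, these characters $\omega^{1-n}$ are \emph{odd}. Likewise the entire analysis of \S\ref{secodd}, including the congruences linking $B_{1,\chi}$ to $\BC'_n$, is about $\H(\cO_K)^-$. Checking $v_P(\BC'_n)>0$ therefore gives no information about the even part $\H(\cO_K)^+$, and there is no ``even range'' of indices $n$ to which Theorem~\ref{mainthmHR} applies. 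Drop this branch entirely and, after step (i), cite \cite{ANG&TAE} directly.
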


\end{void}

\section{$A[\Delta]$-modules}

\begin{void}
Let $P \in A$ be an irreducible element of degree $d$, and let $\Delta:=(A/PA)^\times$. In this section we collect some elementary facts on the structure of $A[\Delta]$-modules, and fix some notation.

Note that $\bF_q[\Delta] = \prod_i F_i$ for some finite field extensions $F_i/\bF_q$. As a consequence we have $A[\Delta] = \prod_i F_i[T]$. In particular $A[\Delta]$ is a principal ideal ring.
\end{void}

\begin{void}
If $M$ is a finite $A[\Delta]$-module then there are ideals $I_1,\ldots,I_n$ such that
\[
	M \cong  A[\Delta]/I_1 \oplus \cdots \oplus A[\Delta]/I_n.
\]
The \emph{Fitting ideal} of $M$ is the ideal
\[
	\Fitt_{A[\Delta]} M := I_1 \cdots I_n.
\]
\end{void}

\begin{void}\label{monicfitting}
Every ideal $I$ of finite index in $A[\Delta]$ has a unique generator $f$ normalized such that for every $i$ the component $f_i \in F_i[T]$  of $f$ is monic.  If $M$ is a finite $A[\Delta]$-module then we denote by $[M]_{A[\Delta]}$ this normalized generator of $\Fitt_{A[\Delta]} M$.
\end{void}

\begin{void}
Let $F$ be an extension of $\bF_q$ and $\chi\colon \Delta\to F^\times$ a homomorphism. Consider the element
\[
	e_\chi := - \sum_{\sigma \in \Delta} \chi^{-1}(\sigma) \sigma \in F[\Delta]
\]
Then $e_\chi$ is an idempotent and $\sigma e_\chi = \chi(\sigma) e_\chi$ for all $\sigma \in \Delta$.
\end{void}

\begin{void}
Let $F$ be a field containing a field of $q^d$ elements. Then the ring 
$F\otimes_{\bF_q} A[\Delta]$ factors as
\[
	F\otimes_{\bF_q} A[\Delta] = \prod_{\chi\colon \Delta\to F^\times} (F\otimes_{\bF_q} A),
\]
and to each $\chi$ corresponds an idempotent $e_\chi \in F \otimes_{\bF_q} A[\Delta]$.
If $M$ is an $A[\Delta]$-module, then we have a decomposition
\[
	F \otimes_{\bF_q} M = \bigoplus_{\chi}  e_\chi (F\otimes_{\bF_q} M).
\]
\end{void}

\begin{void}
Let $F$ be an extension of $\bF_q$ containing a field of $q^d$ elements and let $\Frob\colon F \to F$ be the $q$-Frobenius $x\mapsto x^q$. Then for an 
\[
	\alpha = \sum \alpha(\chi) e_\chi \in F \otimes_{\bF_q} A[\Delta]
\]
with $\alpha(\chi)\in F\otimes_{\bF_q} A$ for all $\chi$ we have that $\alpha$ lies in $A[\Delta]$ if and only if 
\[
	\alpha(\chi^q) = (\Frob\otimes \id) \alpha(\chi)
\]
for all $\chi$.
\end{void}

\begin{void}
Let $V$ be a $k_\infty[\Delta]$-module which is free of rank one. We call a sub-$A[\Delta]$-module $\Lambda$ in $V$ a \emph{lattice} if it is free of rank one over $A[\Delta]$. If $\Lambda_1$ and $\Lambda_2$ are $A[\Delta]$-lattices in $V$ then there is an $f\in k_\infty[\Delta]$ so that $\Lambda_2 = f \Lambda_1$. Moreover, this $f$ is unique if we normalize it analoguously to \ref{monicfitting}, by demanding that for every $i$ its component $f_i\in F_i((T^{-1}))$ has leading coefficient $1$. We denote this normalized $f$ by $[ \Lambda_1 : \Lambda_2 ]_{A[\Delta]}$.
\end{void}

\section{Elementary properties of the cyclotomic function field $K$}\label{secelt}

Recall that $P$ is a monic irreducible element of $A$ and that $K$ denotes the spliting field of the $P$-torsion of the Carlitz module over $k$.
In this section we collect some elementary facts about the field extension $K/k$, and about the Carlitz module over $K$.

\begin{void} \label{torselt}
(\cite[p. 202--208]{ROS}).
We have $\C[P](K)\cong A/PA$ and the action of $\Delta := \Gal(K/k)$ on $\C[P](K)$ induces
a homomorphism
\[
	\omega\colon \Delta \to (A/PA)^\times.
\]
This map is an isomorphism, which we use to identify $\Delta$ with $(A/PA)^\times$.

The field of constants of $K$ is $\bF_q$. The extension $K/k$ is unramified away from $P$ and $\infty$. For a monic irreducible $f \in A$ which is coprime with $P$ we have that $\omega( \Frob_{(f)} ) = \bar f \in (A/PA)^\times$. The prime $P$ is totally ramified in $K/k$.
\end{void}

\begin{void} \label{Alambda}
(\cite[Prop. 12.9 and 12.7]{ROS}).
Let $\lambda\in K$ be a generator of $\C(K)[P]$. Then $\lambda$ is integral over $A$, so $\lambda \in \cO_K$. We have $\cO_K=A[\lambda]$. Moreover, $\lambda$ is a generator of the unique prime ideal of $\cO_K$ that lies above $(P)$.
\end{void}

\begin{void}\label{piexp}
(\cite[\S 2, 3.2, 3.3 and 9.4]{GOS}).
Let $k_\infty^a$ be an algebraic closure of $k_\infty$. Then the exponential map (see \ref{expdef}) defines a short exact sequence
\[
	0 \longto A \bar{\pi} \longto k_\infty^a \overset{\exp_\C}{\longto} \C(k_\infty^a) \longto 0
\]
with
\[
	\bar\pi := \left(\sqrt[q-1]{-T}\right)^q \prod_{n = 1}^\infty \left( 1 - T^{1-q^n} \right)^{-1}
	\in k_\infty(\sqrt[q-1]{-T})
\]
for a choice of $(q-1)$-st root of $-T$. The field $k_\infty(\bar\pi)$ has degree $q-1$ over $k_\infty$.
\end{void}

\begin{void}\label{pilambda}\label{inftydecomposition}
Consider the element
\[
	\lambda := \exp_\C( \bar\pi / P )
\]
of $k_\infty^a$. It is a generator of $\C(k_\infty^a)[P]$. Let $v$ be a place of $K$ above $\infty$. Then we have $K_v \cong k_\infty(\lambda) = k_\infty( \bar\pi  )$. The Galois group of the Kummer extension $k_\infty(\bar\pi)/k_\infty$ is naturally isomorphic to $\bF_q^\times$, and acts on $\lambda$ via the tautological character $\id\colon \bF_q^\times \to \bF_q^\times$. We conclude that the subgroup $\bF_q^\times \subset \Delta$ is both the inertia group and decomposition group at $\infty$ (see also \cite[Theorem 12.14]{ROS}).
\end{void}

\begin{void}\label{ADeltaLambda}
Let $\Lambda$ be the kernel of $\exp_\C \colon K_\infty \to \C(K_\infty)$. Then by the above we have that
$\Lambda$ is free of rank $(q^d-1)/(q-1)$ over $A$ and that $\Lambda^-=\Lambda$. Also,
we have a short exact sequence
\[
	0 \longto \Lambda \longto \U(\cO_K) \overset{\exp_\C}{\longto} \cU \longto 0.
\]
of $A[\Delta]$-modules. Since $\U(\cO_K)$ is free of rank one as $A[\Delta]$-modules we find that
$\cU^+$ is free of rank one over $A[\Delta]^+$ and $\cU^-$ is a torsion $A$-module.
\end{void}

\begin{void}\label{carlitztorsion}
Finally we compute the torsion module of $\C(K)$. Let $Q \in A$ be a nonzero multiple of $P$ and let
$L$ be the splitting field of $\C[Q]$. Then by \cite[Theorem 12.8]{ROS} $L$ is Galois over $K$, and its Galois group can be identified with the kernel of the reduction map $(A/QA)^\times\to (A/PA)^\times$. Its action on $\C(L)[Q]\cong A/QA$ is the natural one. We conclude that
\[
	\C(\cO_K)_\tors=\C(K)_\tors = \C(K)[Q] \cong A/QA.
\]
where $Q\in A$ is the largest multiple of $P$ so that the reduction map $(A/QA)^\times\to (A/PA)^\times$ is an isomorphism. We have $Q=P$ if $q>2$ and $Q$ is the least common multiple of $P$ and $T(T+1)$ if $q=2$. 
\end{void}

\section{Gauss-Thakur sums and generalized Bernoulli-Carlitz numbers}\label{secnormalbasis}

\begin{void}
Fix a generator $\lambda \in K$ of the $P$-torsion of the Carlitz module. Let $F$ be a field extension of $\bF_q$ and $\chi\colon \Delta \to F^\times$ a homomorphism.

Let $\bar{F}$ be an algebraic closure of $F$ and $\omega_1,\ldots,\omega_d$ the $d$ distinct $\bF_q$-embeddings of the field $A/PA$ in $\bar{F}$. Then  $\chi$ can be uniquely written as
\[
	\chi = \omega_1^{s_1} \cdots \omega_d^{s_d}
\]
with $0\leq s_i \leq q-1$ for all $i$ and not all $s_i$ equal to $q-1$. 
If we order the $\omega$'s so that $\omega_i = \omega_{i-1}^q$ for all $i$ and if $n$ is the unique integer with $0\leq n < q^d-1$ and $\chi = \omega_1^n$ then the $s_i$ are the $q$-adic digits of $n$.

The \emph{Gauss-Thakur sum} \cite{THA} associated with $\chi$ is defined as follows:
\begin{equation}\label{eqGaussThakur}
	\tau(\chi) =  \prod_{i=1}^{d} \left(
		- \sum_{\delta \in \Delta} \omega_i(\delta)^{-1} \otimes \delta(\lambda)
	\right)^{s_i} \in \bar{F} \otimes_{\bF_q} \cO_{K}.
\end{equation}
Note that $\Gal(\bar{F}/F)$ permutes the $\omega_i$, but fixes $\tau(\chi)$, so that
$\tau(\chi) \in F \otimes_{\bF_q} \cO_K$. Also note that we have
\[
	\tau(\chi) = \prod_{i=1}^d \tau(\omega_i)^{s_i}.
\]
The reader should be warned that $\tau(\chi)$ depends on the choice of $\lambda$.
\end{void}

We summarize the basic properties of these Gauss-Thakur sums:

\begin{proposition}\label{tauProp}
Let $F$ be an extension of $\bF_q$ and  $\chi\colon \Delta \to F^\times$ a homomorphism. Then
\begin{enumerate}
\item $\tau(\chi) \in e_\chi (F \otimes_{\bF_q} \cO_K)$;
\item if $\chi \neq 1$ then $\tau(\chi)\tau(\chi^{-1}) = (-1)^d \otimes P$;
\item $\tau(1)=1$.
\end{enumerate}
\end{proposition}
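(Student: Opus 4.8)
The plan is to reduce everything to the case of a single embedding $\chi=\omega_i$, since the general Gauss--Thakur sum is a product $\tau(\chi)=\prod_i\tau(\omega_i)^{s_i}$, and then verify the three assertions for the ``basic'' sums $\tau(\omega_i)$.

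\textbf{Part (3).} This is immediate: if $\chi=1$ then $d=1$ forces nothing, but rather $\chi=1$ means all $s_i=0$, so the product defining $\tau(\chi)$ is empty and equals $1\in F\otimes_{\bF_q}\cO_K$.

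\textbf{Part (1).} First I would treat $\tau(\omega_i)=-\sum_{\delta\in\Delta}\omega_i(\delta)^{-1}\otimes\delta(\lambda)$. Using that $\Delta$ acts on $\C(K)[P]$ through $\omega$ and that $\lambda$ generates $\C(K)[P]\cong A/PA$, one computes for $\sigma\in\Delta$ that $\sigma\cdot\tau(\omega_i)=-\sum_\delta\omega_i(\delta)^{-1}\otimes\sigma\delta(\lambda)$, and reindexing $\delta\mapsto\sigma^{-1}\delta$ gives $\omega_i(\sigma)\tau(\omega_i)$; hence $\sigma\tau(\omega_i)=\omega_i(\sigma)\tau(\omega_i)$, i.e.\ $\tau(\omega_i)\in e_{\omega_i}(\bar F\otimes_{\bF_q}\cO_K)$, and then descent along $\Gal(\bar F/F)$ (as remarked in the text) shows it lies in $e_{\omega_i}(F\otimes_{\bF_q}\cO_K)$ when $\omega_i$ is $F$-rational, and in general one works with the product. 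For a product of such elements, the relevant fact is that $e_\chi e_{\chi'}=e_{\chi\chi'}$-type multiplicativity: more precisely, if $x\in e_{\psi_1}M$ and $y\in e_{\psi_2}M$ viewed inside the ring $\bar F\otimes_{\bF_q}\cO_K$, then $\sigma(xy)=\sigma(x)\sigma(y)=\psi_1(\sigma)\psi_2(\sigma)xy$, so $xy\in e_{\psi_1\psi_2}(\bar F\otimes\cO_K)$. Applying this inductively to $\tau(\chi)=\prod_i\tau(\omega_i)^{s_i}$ yields $\tau(\chi)\in e_{\chi}(\bar F\otimes_{\bF_q}\cO_K)$, and the $\Gal(\bar F/F)$-invariance noted in the text upgrades this to $e_\chi(F\otimes_{\bF_q}\cO_K)$.

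\textbf{Part (2).} This is the substantive point and where I expect the real work to be. The claim $\tau(\chi)\tau(\chi^{-1})=(-1)^d\otimes P$ for $\chi\neq1$ is a ``Gauss sum times its conjugate equals the prime'' identity, analogous to $g(\chi)g(\bar\chi)=\chi(-1)p$ for classical Gauss sums. I would first establish it for $\chi=\omega_1$ (a ``prime'' Gauss--Thakur sum of full conductor): here one must show $\prod_{j=1}^d$ of the Frobenius-twisted basic sums, or equivalently $\tau(\omega_1)\tau(\omega_1^{-1})$, equals $(-1)^d P$. The key input is the factorization of $P$ in $\cO_K$ and the norm computation: $\lambda$ generates the unique prime above $P$ (from \ref{Alambda}) and $\prod_{\delta\in\Delta}\delta(\lambda)=\pm P$ up to a unit, indeed equals $P$ up to sign because $P$ is totally ramified of degree $q^d-1$ and $\lambda^{q^d-1}$ is, up to a unit, $P$ times the appropriate constant; more carefully, one uses that the minimal polynomial of $\lambda$ over $k$ is the $P$-th cyclotomic polynomial whose constant term is $P$. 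Then one expands the product $\tau(\chi)\tau(\chi^{-1})=\prod_i\tau(\omega_i)^{s_i}\prod_i\tau(\omega_i)^{q-1-s_i}=\prod_{i=1}^d\tau(\omega_i)^{q-1}$, independent of $\chi$; so it suffices to show $\prod_{i=1}^d\tau(\omega_i)^{q-1}=(-1)^d\otimes P$. Since $\omega_{i}=\omega_{i-1}^q$, applying $\Frob\otimes\id$ permutes the $\tau(\omega_i)$ cyclically, so $\prod_i\tau(\omega_i)^{q-1}$ is $\Frob$-invariant, hence lies in $k\otimes_{\bF_q}\cO_K$, in fact (being $\Delta$-invariant by Part (1), as $\prod\omega_i^{q-1}=1$) in $A$ itself. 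The final identification with $(-1)^d P$ I would get by a norm/valuation count: the element has $P$-adic valuation exactly $1$ at the unique prime above $P$ (each $\tau(\omega_i)$ is a unit times $\lambda^{(\text{digit sum})}$, the total exponent being $(q-1)(1+q+\dots+q^{d-1})=q^d-1$, which is the ramification index, giving valuation $1$ after accounting), is a unit away from $P$, and the sign $(-1)^d$ comes from tracking the signs of the $d$ factors $-\sum_\delta\omega_i(\delta)^{-1}\otimes\delta(\lambda)$, each contributing one $-1$. The main obstacle is making the norm/valuation bookkeeping precise — correctly pinning down both the power of $P$ and the exact sign — and this is presumably where the authors invoke Thakur's original computation \cite{THA}.
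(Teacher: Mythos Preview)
The paper does not prove this proposition; it simply cites \cite[\S 2]{ANG2}. Your sketch therefore goes well beyond what the paper does, and Parts (1) and (3) are handled correctly. Your key reduction in Part (2) is also correct and worth recording: since for $\chi\neq 1$ the base-$q$ digits of $q^d-1-n$ are $q-1-s_i$, one has $\tau(\chi)\tau(\chi^{-1})=\prod_{i=1}^d\tau(\omega_i)^{q-1}$, which is indeed independent of $\chi$, and this common value is $\Frob$- and $\Delta$-invariant, hence lies in $A$.

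However, your proposed identification of this element of $A$ with $(-1)^d P$ has two concrete problems. First, the sign bookkeeping is wrong: each $\tau(\omega_i)$ carries a single $-1$ from the definition, so $\prod_i\tau(\omega_i)^{q-1}$ picks up $(-1)^{d(q-1)}$, which is $+1$ regardless of the parity of $q$ or $d$. The factor $(-1)^d$ therefore does \emph{not} come from ``each of the $d$ factors contributing one $-1$''; it arises from the actual evaluation (ultimately from the relation $\prod_{i=0}^{d-1}D_i^{q-1}=(-1)^d P\cdot(\text{unit})$ hidden in Thakur's computation, or equivalently from the Gross--Koblitz-type formula in \cite{THA}). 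Second, the step ``is a unit away from $P$'' is asserted but not justified: knowing only that the element lies in $A$ and has $P$-adic valuation $1$ leaves an ambiguity by an arbitrary element of $A$ prime to $P$, not merely by a sign. To close this gap one needs either Thakur's explicit congruence $\tau(\omega^{q^i})\equiv\lambda^{q^i}/D_i$ together with a careful argument that the $\tau(\omega_i)$ are units at every finite place away from $P$ (which is not automatic from the definition), or the full Gauss--Thakur evaluation. You correctly anticipate that this is exactly where \cite{THA} is needed, but the sketch as written would not produce the sign and would not, on its own, exclude extraneous prime factors.
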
 

\begin{proof} See \cite[\S 2]{ANG2}.
\end{proof}

\begin{void}\label{B1chi}
In particular, the proposition tells us that $\tau(\chi)$ is nonzero. Since $e_\chi( F\otimes_{\bF_q} K)$ is free of rank one over $F \otimes_{\bF_q} k$, we find that there is a unique $B_{1,\chi} \in F\otimes_{\bF_q} k$ such that
\[
	e_{\chi} ( 1 \otimes \lambda^{-1}) = B_{1,\chi} \tau(\chi)
\]
in $F\otimes_{\bF_q} K$. Note that $B_{1,\chi}$ is fixed under $\Delta$, and so is independent of the choice of $\lambda$. We will refer to the $B_{1,\chi}$ as \emph{generalized Bernoulli-Carlitz numbers}.
\end{void}

\begin{void}\label{B11}
For the trivial character $\chi=1$  we have
\[
	B_{1,1} = e_\chi ( 1 \otimes \lambda^{-1} ) = -1 \otimes \tr_{K/k} \lambda^{-1}.
\]
Since the group $\bF_q^\times$ acts freely on the set of conjugates of $\lambda^{-1}$, we see that $B_{1,1}=0$ if $q>2$. If $q=2$ then we have
\[
	B_{1,1} = 1\otimes\frac{P+1}{T^2+T}.
\]
This follows from the fact that for any $Q\in \bF_2[T]$ different from zero
the $Q$-torsion of the Carlitz module is defined by a polynomial of the form
\[
	\phi_Q(X) = QX + \frac{Q^2+Q}{T^2+T}X^2 + \cdots + X^{2^{\deg Q}}
\]
in $k[X]$.
\end{void}

Finally one can use the $\tau(\chi)$ to give a normal basis for $\cO_K$:

\begin{theorem}\label{thmnormalbasis}
There is a unique $\eta \in\cO_K$ such that for all $F$ and for all $\chi\colon \Delta \to F^\times$ we have
$e_\chi(1\otimes \eta) = \tau(\chi)$ in $F\otimes_{\bF_q} \cO_K$. Moreover, $\cO_K$ is free and generated by $\eta$ as an $A[\Delta]$-module.
\end{theorem}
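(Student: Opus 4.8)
\emph{Constructing and characterising $\eta$.} The plan is to first pin down $\eta$ using the decomposition of $A[\Delta]$-modules over $\bF_{q^d}$, and then to obtain the freeness statement by a divisibility computation after base change to $\overline{\bF_q}$. Since $|\Delta|=q^d-1$ is prime to the characteristic, every character $\chi\colon\Delta\to\overline{\bF_q}^\times$ has values in $\bF_{q^d}^\times$, and $\bF_{q^d}\otimes_{\bF_q}\!A[\Delta]=\prod_\chi(\bF_{q^d}\otimes_{\bF_q}\!A)$ with idempotents $e_\chi$. I would set $\eta':=\sum_\chi\tau(\chi)\in\bF_{q^d}\otimes_{\bF_q}\!\cO_K$ and show it is fixed by $\Frob\otimes\id$, so that $\eta'=1\otimes\eta$ for a unique $\eta\in\cO_K=(\bF_{q^d}\otimes\cO_K)^{\Frob\otimes\id}$. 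The invariance reduces to the identity $(\Frob\otimes\id)\tau(\chi)=\tau(\chi^q)$: the product formula (\ref{eqGaussThakur}) shows that $\Frob\otimes\id$ cyclically permutes the factors $-\sum_\delta\omega_i(\delta)^{-1}\otimes\delta(\lambda)$ because $\omega_i^q=\omega_{i+1}$, and a short bookkeeping with base-$q$ digits identifies the result with $\tau(\chi^q)$; since $\chi\mapsto\chi^q$ permutes the characters, $\eta'$ is invariant. Then $e_\chi(1\otimes\eta)=e_\chi\eta'=\tau(\chi)$ by orthogonality of the $e_\psi$ together with $\tau(\psi)\in e_\psi(\bF_{q^d}\otimes\cO_K)$; for an arbitrary extension $F/\bF_q$ and $\chi\colon\Delta\to F^\times$ both $e_\chi(1\otimes\eta)$ and $\tau(\chi)$ are elements of $F\otimes\cO_K$ that are unchanged under enlarging $F$, so the identity follows by injecting $F\otimes\cO_K$ into $\overline F\otimes\cO_K$ (an algebraic closure of $F$), where the case already treated applies. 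Uniqueness of $\eta$ is clear from the injectivity of $\cO_K\to\bF_{q^d}\otimes\cO_K$.

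\emph{Reducing the freeness statement.} The excerpt already records that $\cO_K$ is free of rank one over $A[\Delta]$, so it remains to show that $\eta$ generates it, i.e.\ that the $A[\Delta]$-linear map $A[\Delta]\to\cO_K$, $x\mapsto x\eta$, is surjective (a surjective endomorphism of the Noetherian module $A[\Delta]$ over itself is bijective). By faithfully flat descent it suffices to check surjectivity after $-\otimes_{\bF_q}\overline{\bF_q}$. Put $R:=\overline{\bF_q}\otimes_{\bF_q}\cO_K$; since the constant field of $K$ is $\bF_q$, this is a Dedekind domain, and $R=\bigoplus_\chi M_\chi$ with $M_\chi:=e_\chi R$ free of rank one over $\overline{\bF_q}[T]$ (base change of the generator of $\cO_K$ over $A[\Delta]$). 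The map in question becomes $\bigoplus_\chi\overline{\bF_q}[T]\to\bigoplus_\chi M_\chi$ sending $1$ in the $\chi$-factor to $\tau(\chi)$, so everything reduces to showing that $\tau(\chi)$ generates $M_\chi$ over $\overline{\bF_q}[T]$ for every $\chi$. As $M_\chi$ is free of rank one this is equivalent to $\tau(\chi)\notin(T-c)M_\chi$ for every $c\in\overline{\bF_q}$, and since $(T-c)$ acts diagonally on $R=\bigoplus_\psi M_\psi$ we have $(T-c)M_\chi=(T-c)R\cap M_\chi$, so it is enough to prove $\tau(\chi)\notin(T-c)R$.

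\emph{The divisibility computation.} As $P$ is totally ramified in $\cO_K$ with $\lambda$ a uniformizer at the unique prime above it, in $R$ one has $(1\otimes\lambda)=\prod_{j=1}^d\mathfrak{L}_j$ and $(1\otimes P)=\prod_j\mathfrak{L}_j^{q^d-1}$, where $\mathfrak{L}_1,\dots,\mathfrak{L}_d$ are the distinct primes of $R$ lying over the $d$ roots $\theta_j\in\overline{\bF_q}$ of $P$, and $(T-\theta_j)R=\mathfrak{L}_j^{q^d-1}$. If $P(c)\neq0$ then $(T-c)R$ is supported at primes distinct from the $\mathfrak{L}_j$, whereas $\tau(\chi)$ divides $1\otimes P$ in $R$ (from $\tau(\chi)\tau(\chi^{-1})=(-1)^d\otimes P$ when $\chi\neq1$, and $\tau(1)=1$), so $(\tau(\chi))$ is supported only at the $\mathfrak{L}_j$ and hence $\tau(\chi)\notin(T-c)R$. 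If $c=\theta_j$ I must show $v_{\mathfrak{L}_j}(\tau(\chi))<q^d-1$; this is immediate for $\chi=1$, and for $\chi\neq1$ it follows from $\tau(\chi)\tau(\chi^{-1})=(-1)^d\otimes P$ once one knows $v_{\mathfrak{L}_j}(\tau(\chi^{-1}))\geq1$, which holds because $\tau(\chi^{-1})$ is a nonempty product of factors $\tau(\omega_i)$ and each $\tau(\omega_i)$ lies in $(1\otimes\lambda)R\subseteq\mathfrak{L}_j$ (every Galois conjugate $\delta(\lambda)$ lies in the prime above $P$). Combining the two cases gives $\tau(\chi)\notin(T-c)R$ for all $c$, which finishes the proof.

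\emph{Main difficulty.} The conceptual content is entirely in the last two paragraphs, and the only point requiring real care is identifying the primes of $R$ above $P$ with the linear factors of $P$ over $\overline{\bF_q}$ and pinning down the factorizations $(1\otimes P)=\prod_j\mathfrak{L}_j^{q^d-1}$ and $(T-\theta_j)R=\mathfrak{L}_j^{q^d-1}$ correctly; I expect the dichotomy $P(c)=0$ versus $P(c)\neq0$ to be where one has to be most attentive, since the mechanism producing $\tau(\chi)\notin(T-c)R$ is genuinely different in the two cases. The $\Frob\otimes\id$-invariance used to construct $\eta$ is routine once the digit bookkeeping is set up.
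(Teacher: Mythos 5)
Your argument is correct, and since the paper itself only cites \cite{ANG2} and \cite{CHAP} for this result, you have in effect supplied a self-contained proof. Two small remarks on the write-up. First, on the construction of $\eta$: the verification that $\eta'=\sum_\chi\tau(\chi)$ is $(\Frob\otimes\id)$-fixed rests on the relation $(\Frob\otimes\id)\tau(\omega_i)=\tau(\omega_{i+1})$ and the fact that the digit vector of $\chi^q$ is the cyclic shift $(s_d,s_1,\dots,s_{d-1})$ of that of $\chi$; both hold with the paper's ordering $\omega_{i}^q=\omega_{i+1}$, and the convention ``not all $s_i=q-1$'' is preserved under cyclic shifts, so the bijection $\chi\mapsto\chi^q$ of characters is compatible with the product formula (\ref{eqGaussThakur}). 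You may want to note explicitly that passing from the $\bF_{q^d}$-case to general $F$ uses that $\chi$ necessarily factors through $\bF_{q^d}^\times\subset\overline F^\times$ and that both $e_\chi(1\otimes\eta)$ and $\tau(\chi)$ are compatible with the inclusion $\bF_{q^d}\hookrightarrow\overline F$.

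Second, in the freeness reduction, the parenthetical about surjective endomorphisms of Noetherian modules is not actually needed: the theorem asks only that $\eta$ generate $\cO_K$, and freeness of $\cO_K$ over $A[\Delta]$ is already recorded in \S 2 of the paper via the lattice argument; generation is exactly surjectivity of $x\mapsto x\eta$. Your key lemma, that $(T-\theta_j)R=\fL_j^{q^d-1}$ and $(1\otimes P)R=\prod_j\fL_j^{q^d-1}$, is correct: $R/(1\otimes\lambda)R\cong\overline{\bF_q}\otimes\bF_{q^d}\cong\overline{\bF_q}^d$ shows $(1\otimes\lambda)R$ is radical with exactly $d$ primes; combining with $\dim_{\overline{\bF_q}}R/(T-\theta_j)R=q^d-1$ forces each $(T-\theta_j)$ to have a single prime above it with ramification $q^d-1$. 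From there the two cases $P(c)\neq 0$ (support of $(\tau(\chi))$ is confined to the $\fL_j$) and $c=\theta_j$ (valuation bound via $\tau(\chi)\tau(\chi^{-1})=(-1)^d\otimes P$ and $\tau(\omega_i)\in(1\otimes\lambda)R$) do exactly what is needed, and the argument is complete.
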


\begin{proof}See \cite[Th\'eor\`eme 2.5]{ANG2} or \cite{CHAP}.
\end{proof}

If $F$ contains a field of $q^d$ elements then we have
\[
	1\otimes \eta = \sum_{\chi} \tau(\chi)
\]
in $F\otimes_{\bF_q} \cO_K$, were $\chi$ ranges over all homomorphisms from $\Delta$ to $F^\times$.

\section{$\infty$-adic equivariant class number formula} \label{secCNF}

In this section we prove Theorem \ref{thmCNF}. The proof follows very closely the proof of the special value formula in \cite{TAE2}, and rather than copying the whole proof, we give an overview of the argument, while treating in detail  those parts that are different.

\begin{void}
We start by giving an Euler product formula for the equivariant $L$-value $L(1,\Delta)$.
If $\fm$ is a maximal ideal of $A$ then both $\cO_K/\fm\cO_K$ and $\C(\cO_K/\fm\cO_K)$ are finite $A[\Delta]$-modules so we can consider the normalized generators $[\cO_K/\fm\cO_K]_{A[\Delta]}$ respectively $[\C(\cO_K/\fm\cO_K)]_{A[\Delta]}$ of their Fitting ideals, see \ref{monicfitting}.

Similarly, if $F$ is an extension of $\bF_q$ and $M$ a finite $F\otimes_{\bF_{q}}\! A$-module then we denote by $[M]_{F\otimes_{\bF_q} A}$ the unique monic generator in  $F\otimes_{\bF_q}\! A \cong F[T]$ of the Fitting ideal of $M$.
\end{void}

\begin{proposition}
Let $F$ be an extension of $\bF_q$ and $\chi\colon \Delta \to F^\times$ a homomorphism. Let $\fm \subset A$ be a maximal ideal, with monic generator $f$. Then we have
\[
	\left[ e_\chi( F\otimes_{\bF_q}\! \C(\cO_K/\fm\cO_K) ) \right]_{F\otimes_{\bF_q} A} =
	1 \otimes f - \chi(f) \otimes 1.
\]
\end{proposition}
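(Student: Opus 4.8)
The goal is to compute the Fitting ideal of the $\chi$-isotypic component of $\C(\cO_K/\fm\cO_K)$, and the natural strategy is to first understand $\C(\cO_K/\fm\cO_K)$ as an $A[\Delta]$-module, then apply $e_\chi(F\otimes_{\bF_q}-)$.

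First I would reduce to the case where $\fm$ is coprime to $P$. If $\fm = (P)$, then $\cO_K/P\cO_K = A[\lambda]/(P,\lambda^{?})$; since $P$ is totally ramified and $\lambda$ generates the prime above $P$ with $\lambda^{q^d-1}$ essentially a unit times $P$ (by \ref{Alambda}), the quotient $\cO_K/P\cO_K$ is a local ring, finite over $\bF_q$, on which $\Delta$ acts, and $\chi(f) = \chi(P) = 0$ in $F$ because $P \in \fm$ maps to $0$ under any $\chi\colon\Delta\to F^\times$ extended by the convention in \ref{torselt}/\ref{B1s} — wait, one must be careful here: $\chi$ is only defined on $\Delta$, and "$\chi(f)$" means applying the ring-or-group convention. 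I would check the formula reduces correctly in this degenerate case and otherwise assume $\fm\nmid P$, so $\fm$ is unramified in $K/k$.

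For $\fm$ coprime to $P$, the key computation is: $\cO_K/\fm\cO_K = \prod_{\fM\mid\fm}\cO_K/\fM$, a product of copies of the residue field $\bF_{\fM}$, permuted transitively by $\Delta$ with stabilizer the decomposition group $D_\fm = \langle\Frob_\fm\rangle$. Thus $\cO_K/\fm\cO_K \cong A[\Delta]\otimes_{A[D_\fm]}\bF_\fM$ is induced from $D_\fm$. Now the Carlitz $A$-module structure on $R := \cO_K/\fm\cO_K$: on each factor $\bF_\fM$, which is a finite field of characteristic $p$, the Carlitz module $\C(\bF_\fM)$ is a finite $A$-module whose structure is governed by the Carlitz–Frobenius. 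Concretely, $T$ acts as $r\mapsto Tr + r^q$; iterating, the "Frobenius endomorphism" of $\C$ over $\bF_\fM$ corresponds to multiplication by the monic generator $f$ of $\fm$ (this is the analogue of the Eichler–Shimura/Carlitz relation: the $q^{\deg\fm}$-power Frobenius acts on the Carlitz module as $\phi_f$). Hence on $\C(\bF_\fM)$ the element $\Frob_\fm\in\Delta$ acts as multiplication by $\omega(\Frob_\fm)=\bar f$, and simultaneously the Carlitz action of $f\in A$ equals the geometric $q^{\deg\fm}$-Frobenius, which acts on each residue field factor. I would make this precise: as an $A[D_\fm]$-module, $\C(\bF_\fM) \cong A[D_\fm]/(\text{something})$, and after inducing up, $\C(R) \cong A[\Delta]/(1\otimes f - \sigma_\fm)$ where $\sigma_\fm = \Frob_\fm\in\Delta$ — that is, $\C(\cO_K/\fm\cO_K) \cong A[\Delta]/(f - \Frob_\fm)$ as $A[\Delta]$-modules, where we write $f$ for $f\cdot 1$.

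Granting that isomorphism, the rest is formal: applying $e_\chi(F\otimes_{\bF_q}-)$ to $A[\Delta]/(f-\Frob_\fm)$ gives $(F\otimes_{\bF_q}A)/(1\otimes f - \chi(\Frob_\fm)\otimes 1) = (F\otimes A)/(1\otimes f - \chi(f)\otimes 1)$, using $\omega(\Frob_\fm)=\bar f$ so $\chi(\Frob_\fm)=\chi(f)$. Its Fitting ideal over $F\otimes_{\bF_q}A$ is generated by $1\otimes f - \chi(f)\otimes 1$, which is the claim. The main obstacle is the boxed isomorphism $\C(\cO_K/\fm\cO_K)\cong A[\Delta]/(f-\Frob_\fm)$: establishing that the Carlitz action of $f$ on $\C(\bF_\fM)$ coincides with the $\lvert\bF_\fM:\bF_q\rvert$-fold relative Frobenius (equivalently, that $\phi_f\equiv x^{q^{\deg\fm}}\bmod\fm$ up to the identification, i.e. $\phi_f(x)\equiv x^{q^{\deg\fm}}$ in $\cO_K/\fm$), and that this makes $\C(\bF_\fM)$ free of rank one over $A[D_\fm]/(f-\Frob_\fm)$. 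This is the function-field analogue of the computation of Frobenius on the reduction of the Carlitz module; I expect it follows from the defining recursion for $\phi_f$ together with the fact that $\bF_\fM = \bF_q[\lambda]/\fM$ and $\lambda$ generates the $P$-torsion, but pinning down freeness (rather than just a surjection $A[\Delta]/(f-\Frob_\fm)\twoheadrightarrow\C(R)$) may require a length or cardinality count: $\#\C(\bF_\fM) = \#\bF_\fM = q^{\deg\fm\cdot\lvert D_\fm\rvert^{-1}[\Delta:?]}$... more simply, $\#\C(\cO_K/\fm\cO_K) = \#(\cO_K/\fm\cO_K) = \#(A/\fm)^{[K:k]} = q^{d\deg\fm}\cdot(\text{adjust})$, which should match $\#(A[\Delta]/(f-\Frob_\fm))$ via the norm identity for the characteristic polynomial of $\Frob_\fm$ acting on the regular representation.
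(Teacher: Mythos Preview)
Your key insight—that on $\cO_K/\fm\cO_K$ the $n$-fold Frobenius $\tau^n$ coincides with the action of $\Frob_\fm\in\Delta$ (for $\fm\neq(P)$)—is exactly what the paper uses, but the paper packages it differently and avoids the cyclicity issue entirely. Rather than seeking a module isomorphism, the paper computes the characteristic polynomial of the Carlitz action of $T$ (that is, of $T+\tau$) on $M:=e_\chi(F\otimes_{\bF_q}\cO_K/f\cO_K)$ as an $F$-linear map. Since $\cO_K$ is free of rank one over $A[\Delta]$, the space $M$ is free of rank one over $F\otimes_{\bF_q}A/fA$, which (after enlarging $F$) is $F^n$; in the resulting basis $T+\tau$ is an explicit matrix with characteristic polynomial $f(Z)-c$, where $c\in F$ is the scalar by which $\tau^n$ acts on $M$. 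For $\fm\neq(P)$ this scalar is $\chi(\Frob_\fm)=\chi(f)$; for $\fm=(P)$ the explicit description $\cO_K/P\cO_K\cong(A/PA)[\epsilon]/(\epsilon^{q^d-1})$ with $\tau^d(\epsilon)=0$ gives $c=1$ if $\chi=1$ and $c=0$ otherwise, matching the convention for $\chi(P)$. Over a PID the Fitting ideal is always generated by the characteristic polynomial, so this finishes the proof with no cyclicity statement needed.

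The gap in your route is that a cardinality count does \emph{not} yield the isomorphism $\C(\cO_K/\fm\cO_K)\cong A[\Delta]/(f-\sigma_f)$, nor even the equality of Fitting ideals. Knowing that $f-\sigma_f$ annihilates $\C(\cO_K/\fm\cO_K)$ and that $|A[\Delta]/(f-\sigma_f)|=|\C(\cO_K/\fm\cO_K)|$ is not enough: over $R=\bF_q[T]$, the module $M=R/(T)\oplus R/(T(T+1))$ is annihilated by $I=(T(T+1)^2)$ with $|R/I|=|M|$, yet $\Fitt_R M=(T^2(T+1))\neq I$. What you actually need is that $\C(\cO_K/\fM)$ is \emph{cyclic} as an $A$-module. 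This happens to be true for the Carlitz module—for every monic irreducible $g$ coprime to the $A$-characteristic one has $\C(\bar L)[g]\cong A/gA$, forcing $\C(L)[g]$ to be $0$ or $A/gA$ for any finite $A$-field $L$—but it is a separate lemma that you have not supplied and that your cardinality count does not replace. Finally, the ramified case $\fm=(P)$ genuinely requires its own argument (there is no $\Frob_\fm\in\Delta$ to invoke), which the paper carries out via the explicit local description above and which you have left open.
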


Here $\chi(f)$ is defined according to the conventions of \ref{infL}.

\begin{proof}
Without loss of generality we may assume that $F$ is algebraically closed. We need to show that
\[
	\det_{F[Z]} \left( Z - T - \tau \,\mid\, e_\chi \left(F \otimes_{\bF_q} \frac{\cO_K}{f\cO_K}\right) [Z] \right) = f(Z) - \chi(f),
\]
where $\tau$ is the $F[Z]$-linear map induced by the map $\cO_K \to \cO_K, x \mapsto x^q$.

Let $n := \deg f$ and let $t_1$, $t_2$, \ldots, $t_n \in F$ be such that
\[
	f(Z) = \prod_{i=1}^n (Z-t_i)
\]
in $F[Z]$, ordered such that for all $i $ we have $t_{i+1} = t_i^q$ (where the indices are taken modulo $n$).

By the Chinese remainder theorem we have an isomorphism of $F$-algebras 
\begin{equation}\label{crt}
	F \otimes_{\bF_q}\!A/fA \isomto F^n
\end{equation}
which maps $1\otimes (T+fA)$ to $(t_1,\ldots, t_n)$.

Since $\cO_K$ is free of rank one over $A[\Delta]$ (see \ref{thmnormalbasis}), we see that the module 
\[
	M :=  e_\chi \left(F \otimes_{\bF_q} \frac{\cO_K}{f\cO_K}\right)
\]
is free of rank one over $F\otimes_{\bF_q} A/fA \cong F^n$. We thus find that 
\[
	M = M_1 \oplus M_2 \oplus \cdots \oplus M_n
\]
as $F$-vector spaces, with $T m = t_i m$ for all $i$ and all $m\in M_i$. Also, 
the $F$-linear action of $\tau$ on 
$M$ permutes the $n$ components cyclically. So with respect to a suitable $F$-basis we find that $T+\tau$ acts on $M$ via the matrix
\[
\left(\begin{array}{ccccc}
t_1 & x_1 &  &   & \\ 
& t_2 & x_2 &  & \\
&  &  t_3 &  x_3 & \\
 &  &  & \ddots & \\
x_n &  & &  & t_n\end{array}\right)
\]
with characteristic polynomial $f(X)-x_1\cdots x_n$. Since $\tau^n$ acts as scalar multiplication by $x_1\cdots x_n$ on $M$ we are reduced to showing that $\tau^n=\chi(f)$ as endomorphisms of $M$.

If $f$ is coprime with $P$ then we have that $\tau^n$ is the reduction of the Frobenius at $f$. By \ref{torselt} this coincides with the action of $\bar{f}\in \Delta$, hence $\tau^n$ acts as $\chi(f)$ on $M$, as desired. 

In the remaining case $f=P$ we have $\cO_K/f\cO_K \cong (A/PA)[\epsilon]/\epsilon^{q^d-1}$ where $\tau^d$ acts as the identity on $A/PA$ and $\tau^d(\epsilon) = 0$. Also in this case, we find that $\tau^n$ acts on $M$ as
\[
	\chi(f)=\begin{cases} 0 & \text{if }\chi\neq 1,\\
	1 & \text{if }\chi = 1. \end{cases}
\]
\end{proof}

\begin{corollary} The infinite product
\[
	\prod_{\fm} \frac{ \big[ \cO_K/\fm\cO_K \big]_{A[\Delta]} }{ \big[ \C(\cO_K/\fm\cO_K) \big]_{A[\Delta]} }
\]
with $\fm$ ranging over the maximal ideals of $A$, converges in $k_\infty[\Delta]$ to $L(1,\Delta)$.
\end{corollary}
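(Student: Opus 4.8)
The plan is to assemble the equivariant $L$-value $L(1,\Delta)$ from its Euler factors and identify each factor with the ratio of Fitting-ideal generators supplied by the previous proposition. First I would rewrite $L(1,\Delta)$ as an Euler product. By construction (see \ref{infL} and the definition of $L(1,\Delta)$), for each homomorphism $\chi\colon\Delta\to F^\times$ the image of $L(1,\Delta)$ under $k_\infty[\Delta]\to F\otimes_{\bF_q}k_\infty$ is
\[
	L(1,\chi) = \sum_{a\in A_+} \frac{\chi(a)}{a}.
\]
Since $A_+$ is freely generated as a multiplicative monoid by the monic irreducibles and $\chi$ is multiplicative on monics coprime to $P$ (with the convention $\chi(P)=0$ or $1$ according as $\chi\neq 1$ or $\chi=1$ making it multiplicative even at $P$), the usual unique-factorization manipulation gives, for each $\chi$,
\[
	L(1,\chi) = \prod_{\fm} \frac{1}{1 - \chi(f)/f} = \prod_{\fm} \frac{f}{f-\chi(f)}
\]
where $\fm$ runs over maximal ideals of $A$ with monic generator $f$. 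One must check this formal identity converges $\infty$-adically: each factor is $1 + O(\deg f)$ in the $\infty$-adic sense, and there are finitely many $\fm$ of bounded degree, so the partial products converge, matching the grouping in the statement.

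Next I would rewrite each Euler factor in terms of Fitting ideals. For $\fm$ coprime to $P$ the module $\cO_K/\fm\cO_K$ is free of rank one over $A[\Delta]/\fm(A[\Delta])$, so its Fitting ideal is generated by the monic $f$; hence $[\cO_K/\fm\cO_K]_{A[\Delta]} = f$, and after applying $\chi$ the preceding proposition gives $[e_\chi(F\otimes\C(\cO_K/\fm\cO_K))]_{F\otimes A} = f - \chi(f)$. For $\fm=(P)$, the extension is totally ramified (see \ref{torselt}), $\cO_K/P\cO_K\cong(A/PA)[\epsilon]/\epsilon^{q^d-1}$, and again the numerator generator is the monic reduction of $P$ while the denominator, by the proposition, is $f - \chi(f)$ with $\chi(P)\in\{0,1\}$ — which is exactly the convention baked into $L(1,\chi)$, so the $P$-factor causes no discrepancy. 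Thus, applying $\chi$ to the proposed infinite product of ratios yields precisely $\prod_\fm f/(f-\chi(f)) = L(1,\chi)$ for every $\chi$, which by the defining property of $L(1,\Delta)$ (it is the unique element of $k_\infty[\Delta]^\times$ with the right images under all $\chi$) pins down the product as $L(1,\Delta)$ — provided we know the product converges in $k_\infty[\Delta]$.

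So the genuine work, and the step I expect to be the main obstacle, is \textbf{convergence of the product in $k_\infty[\Delta]$ with the indicated grouping}. The factor-by-factor $\infty$-adic estimate I sketched above is exactly what justifies reindexing the sum $L(1,\chi)$ as an Euler product; one needs that for a monic irreducible $f$ of degree $n$, the element $[\C(\cO_K/\fm\cO_K)]_{A[\Delta]}^{-1}[\cO_K/\fm\cO_K]_{A[\Delta]} - 1$ has all components of $\infty$-adic valuation tending to $+\infty$ as $n\to\infty$, uniformly over the finitely many $\chi$ (equivalently over the finitely many factors $F_i[T]$ of $A[\Delta]$). This follows from the proposition: the ratio is $\prod_i(1 - \chi(f)/f)^{-1}$ componentwise, and since $\deg\chi(f) < \deg f = n$ for $\chi\neq 1$ (as $\chi(f)$ lies in the finite field $F$ up to the cyclotomic identification, so has $\infty$-adic valuation $-\deg$ at most $0 < n$), each factor is $1 + O(T^{-n})$; the $\chi=1$ component requires the standard observation that $\sum_{a\in A_{n,+}} 1/a$ has $\infty$-adic valuation growing with $n$. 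Granting this uniform estimate, the partial products of the ratios converge coefficientwise in $k_\infty[\Delta]$, and by the foregoing their image under each $\chi$ is $L(1,\chi)$; hence the limit is $L(1,\Delta)$, completing the proof.
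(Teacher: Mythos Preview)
Your proposal is correct and follows essentially the same approach as the paper: decompose $L(1,\Delta)$ character by character, write each $L(1,\chi)$ as the Euler product $\prod_f (1-\chi(f)/f)^{-1}$, and identify each factor with the ratio $[e_\chi(F\otimes\cO_K/\fm)]/[e_\chi(F\otimes\C(\cO_K/\fm))]$ using the preceding proposition. The paper's proof is terser, simply invoking the Euler product and the proposition and then ``combining''; your added discussion of convergence is sound, though note that the case split for the numerator is unnecessary (since $\cO_K$ is free of rank one over $A[\Delta]$, one has $[\cO_K/\fm\cO_K]_{A[\Delta]}=f$ for \emph{every} $\fm$, including $(P)$), and your remark that the $\chi=1$ component ``requires'' the growth of $v_\infty(\sum_{a\in A_{n,+}}1/a)$ is misplaced: for the Euler product one only needs $v_\infty(1/f)=\deg f$, which gives $(1-1/f)^{-1}=1+O(T^{-\deg f})$ just as for the other characters.
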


\begin{proof}
Let $F$ be an extension of $\bF_q$ of degree $d$. Then we have
\[
	1\otimes L(1,\Delta) = \sum_{\chi\colon \Delta\to F^\times} L(1,\chi)e_\chi
\]
in $F\otimes k_\infty[\Delta]$. Also, we have the Euler product formula
\[
	L(1,\chi) = \prod_{f} \left(1-\frac{\chi(f)}{f} \right)^{-1}
\]
with $f$ running over the monic irreducible elements of $A$. Now by the proposition, the latter equals
\[
	\prod_{\fm} \frac{
		\big[ e_\chi( F \otimes \cO_K/\fm ) ) \big]_{F\otimes A}
	}{
		\big[ e_\chi( F \otimes \C(\cO_K/\fm) ) \big]_{F\otimes A}
	}.
\]
Combining these we find the corollary.
\end{proof}

\begin{void} Next we need a slight generalization of the trace formula of \cite[\S 3]{TAE2}.
 Let $F$ be a finite extension of $\bF_q$. Let $M$ be a free $F\otimes_{\bF_q}\! A$-module of finite rank.  Let $\tau\colon  M \to M$ be an $\bF_q$-linear map such that $\tau((x\otimes a)m)=(x\otimes a^q)\tau(m)$ for all $x\in F$, $a \in A$ and $m\in M$.
 
Let $Z$ be an indeterminate commuting with $\tau$ and let $\Psi$ be a power series
\[
	\Psi = \sum_{i,j\geq 1} a_{ij} \tau^i Z^{-j}
\]
with $a_{ij}\in A$ for all $i,j$, such that for all $j$ there are only finitely many $i$ with $a_{ij} \neq 0$. In other words, the coefficient of $Z^{-j}$ is a polynomial in $\tau$.

Then for every maximal ideal $\fm$ of $A$ there is an obvious $F[[Z^{-1}]]$-linear action of $\Psi$ on $F[[Z^{-1}]] \otimes_F (M/\fm M)$. Also, there is a natural $F[[Z^{-1}]]$-action of $\Psi$ on the $F[[Z^{-1}]]$-module
\[
	F[[Z^{-1}]]  \hat\otimes_F \frac{k_\infty\otimes_A M}{M} 
	:= \left\{ \sum_{i\geq 0} m_i Z^{-i} \colon m_i \in \frac{k_\infty\otimes_A M}{M} \right\}
\]
This endomorphism is \emph{nuclear} in the sense of \cite[\S 2]{TAE2}, so we can take the determinant of $1+\Psi$ acting on this compact module.
\end{void}

\begin{proposition}
The infinite product
\[
	\prod_{\fm} \det_{F[[Z^{-1}]]} \left( 1 + \Psi \,|\,  F[[Z^{-1}]] \otimes_{F}  \frac{M}{\fm M}\right)^{-1},
\]
where $\fm$ runs over the maximal ideals of $A$, converges to 
\[
	\det_{F[[Z^{-1}]]} \left( 1 + \Psi \,|\, F[[Z^{-1}]]  \hat\otimes_F \frac{k_\infty\otimes_A M}{M} \right).
\]
\end{proposition}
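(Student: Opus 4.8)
## Proof Proposal

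The plan is to reduce this to the trace-formula / determinant-identity already established in \cite[\S 3]{TAE2} by base change from $A$ to $F \otimes_{\bF_q} A$. The key observation is that $F \otimes_{\bF_q} A \cong F[T]$ is again a polynomial ring over a field, $M$ is free over it of finite rank, and $\tau$ satisfies exactly the $q$-semilinearity hypothesis required there (with respect to the Frobenius $a \mapsto a^q$ on $A$, which is $F$-linear and hence extends compatibly). So the present proposition is the statement of \emph{loc.~cit.} applied verbatim to the ground ring $F \otimes_{\bF_q} A$ in place of $A$, with coefficients enlarged from $\bF_q$ to $F$; the only thing to check is that nothing in the proof of \cite[\S 3]{TAE2} used that the ground field was $\bF_q$ rather than a finite extension $F$.

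First I would set up the dictionary. Write $B := F \otimes_{\bF_q} A \cong F[T]$; its maximal ideals are in bijection with the maximal ideals of $A$ together with a choice of $F$-point above, but more usefully, for each maximal ideal $\fm$ of $A$ the quotient $M/\fm M$ is a finite $F \otimes_{\bF_q} (A/\fm)$-module, and $F \otimes_{\bF_q}(A/\fm)$ is a finite product of finite fields (finite étale over $F$). The completion of $B$ at infinity is $F \otimes_{\bF_q} k_\infty \cong F((T^{-1}))$ (again a field since $F/\bF_q$ is separable, or a product thereof — in any case finite over $k_\infty$), so the module $F[[Z^{-1}]] \,\hat\otimes_F \, (k_\infty \otimes_A M)/M$ is precisely the analogue over $B$ of the compact module appearing in \cite[\S 2--3]{TAE2}. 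The operator $\Psi$, being a $Z^{-1}$-adically convergent series whose $Z^{-j}$-coefficients are polynomials in $\tau$ with coefficients in $A \subset B$, is nuclear in the sense of \cite[\S 2]{TAE2}, so $\det(1 + \Psi)$ is defined on that compact module.

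Next I would invoke the trace formula of \cite[\S 3]{TAE2}: for a nuclear operator built from a semilinear $\tau$ on a finite free module over a polynomial ring over a field, the "global" determinant on the compact quotient $(k_\infty \otimes M)/M$ equals the convergent infinite product over maximal ideals $\fm$ of the inverse "local" determinants on $M/\fm M$. Applying this with ground ring $B = F[T]$ and the given $\tau$, $\Psi$ yields exactly the asserted identity, once one identifies $F[[Z^{-1}]] \otimes_F (M/\fm M)$ on the local side and $F[[Z^{-1}]] \,\hat\otimes_F\, (k_\infty \otimes_A M)/M$ on the global side with the corresponding objects over $B$. Convergence of the product is part of that theorem: the $Z^{-1}$-adic valuation of $1 - \det(1+\Psi \mid F[[Z^{-1}]] \otimes M/\fm M)^{-1}$ grows with $\deg \fm$, because $\Psi$ raises $Z^{-1}$-order and $\tau$ acts on $M/\fm M$ with the relevant nilpotence/norm bounds.

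The main obstacle — really the only point that needs genuine verification rather than translation — is confirming that the proof in \cite[\S 3]{TAE2} goes through word for word with $\bF_q$ replaced by a finite extension $F$. The potential worry is that the argument there might exploit that $\tau$ is the \emph{absolute} $q$-Frobenius on the constant field, whereas here $\tau$ is only $F$-linear, not $\bF_q$-linear in a way compatible with Frobenius on $F$. I expect this to cause no trouble: the trace formula is proved by a local computation at each $\fm$ plus a $Z^{-1}$-adic limit, and the relevant input is only the semilinearity relation $\tau((x\otimes a)m) = (x\otimes a^q)\tau(m)$ over the ground ring, which holds by hypothesis. If one wants to be completely safe, one can further base-change to $\bar F$ (where $\tau$ becomes, after choosing bases, a block-permutation operator as in the proof of the preceding proposition) and check the identity factor by factor over the idempotent decomposition $\bar F \otimes_{\bF_q} A/\fm \cong \prod \bar F$, then descend; but I anticipate the cleanest write-up is simply to cite \cite[\S 3]{TAE2} and remark that its proof applies verbatim with $A$ replaced by $F \otimes_{\bF_q} A$.
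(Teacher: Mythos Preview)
Your proposal is correct and takes essentially the same approach as the paper: both argue that the proof of \cite[\S 3]{TAE2} carries over verbatim when the coefficient field $\bF_q$ is replaced by a finite extension $F$, the only point being that nothing in that argument uses any special property of $\bF_q$ beyond its role as the base field. The paper's proof is a single sentence to this effect; your write-up is more elaborate (flagging the partial-Frobenius issue and sketching a fallback via base change to $\bar F$), but the substance is the same.
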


\begin{proof}
The only difference with the formula of \cite[\S 3]{TAE2} is that we deal with a $q$-Frobenius but with $F$-linear determinants for various finite extensions $F/\bF_q$.  However, the proof of this generalization is identical to the proof in \cite{TAE2}.
\end{proof}

Put
\[
	\Theta = \frac{1-(T+\tau)Z^{-1}}{1-TZ^{-1}} - 1 = - \sum_{n=1}^{\infty} \tau T^{n-1} Z^{-n}.
\]
Applying the proposition with $\Psi=\Theta$ and $M=e_\chi(F \otimes_{\bF_q} \cO_K)$ for every
$\chi\colon \Delta\to F^\times$ we get:

\begin{proposition}\label{proptrace}We have 
\[
	L(1,\Delta) = 
	\det_{\bF_q[\Delta][[Z^{-1}]]} 
	\left( 
		1 + \Theta \mid  \bF_q[[Z^{-1}]]  \hat\otimes_{\bF_q}\frac{K_\infty}{\cO_K}
	 \right)_{\big| Z=T}
\]
 in $k_\infty[\Delta]=\bF_q[\Delta]((T^{-1}))$. \qed
\end{proposition}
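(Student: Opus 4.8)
The plan is to obtain this by applying the trace formula (the Proposition immediately preceding the definition of $\Theta$) one character at a time and then identifying the local factors with the Euler factors of the $L$-values, reusing the computation already carried out in the proof of the Proposition computing $[e_\chi(F\otimes_{\bF_q}\C(\cO_K/\fm\cO_K))]_{F\otimes_{\bF_q}A}$. Fix a finite extension $F/\bF_q$ containing a field of $q^d$ elements and, for each homomorphism $\chi\colon\Delta\to F^\times$, put $M_\chi:=e_\chi(F\otimes_{\bF_q}\cO_K)$. By Theorem~\ref{thmnormalbasis}, $M_\chi$ is free of rank one over $F\otimes_{\bF_q}A$; the Frobenius $x\mapsto x^q$ on $\cO_K$, extended $F$-linearly to $F\otimes_{\bF_q}\cO_K$, commutes with the $\Delta$-action and with $e_\chi$, hence restricts to an endomorphism $\tau$ of $M_\chi$ of the kind the trace formula requires; and $\Theta=-\sum_{n\ge1}\tau\,T^{n-1}Z^{-n}$ visibly has the form required of $\Psi$ (the coefficient of each $Z^{-n}$ is the polynomial $-T^{n-1}$ in $\tau$). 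Applying the trace formula with $\Psi=\Theta$ and $M=M_\chi$ yields, in $F[[Z^{-1}]]$, the identity
\[
\prod_{\fm}\det_{F[[Z^{-1}]]}\bigl(1+\Theta\mid F[[Z^{-1}]]\otimes_F M_\chi/\fm M_\chi\bigr)^{-1}=\det_{F[[Z^{-1}]]}\bigl(1+\Theta\mid F[[Z^{-1}]]\,\hat\otimes_F(k_\infty\otimes_A M_\chi)/M_\chi\bigr).
\]

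Next I would evaluate the local factor at $\fm=(f)$. Since $1+\Theta=\bigl(1-(T+\tau)Z^{-1}\bigr)\bigl(1-TZ^{-1}\bigr)^{-1}$ and $1-TZ^{-1}$ is invertible on $F[[Z^{-1}]]\otimes_F M_\chi/\fm M_\chi$, that local factor equals $\det_F(Z-T-\tau\mid M_\chi/\fm M_\chi)/\det_F(Z-T\mid M_\chi/\fm M_\chi)$. As $M_\chi/\fm M_\chi\cong e_\chi(F\otimes_{\bF_q}\cO_K/f\cO_K)$ is free of rank one over $F\otimes_{\bF_q}A/fA$, the multiplication-by-$T$ operator has characteristic polynomial $f(Z)$, so the denominator is $f(Z)$; and the numerator is $f(Z)-\chi(f)$ by the identity $\det_{F[Z]}(Z-T-\tau\mid e_\chi(F\otimes_{\bF_q}\cO_K/f\cO_K)[Z])=f(Z)-\chi(f)$ established in the proof of the Euler-factor Proposition above, where $\chi(f)$ is the value dictated by the conventions of \ref{infL} (in particular at $f=P$). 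Hence the local factor is $1-\chi(f)/f(Z)$, and the left-hand product becomes $\prod_f\bigl(1-\chi(f)/f(Z)\bigr)^{-1}$ over all monic irreducibles $f$. Setting $Z=T$ — legitimate because both sides converge in $F\otimes_{\bF_q}k_\infty$, by the estimates of \cite{TAE2} — the left side becomes the Euler product for $L(1,\chi)$, whence
\[
L(1,\chi)=\Bigl[\det_{F[[Z^{-1}]]}\bigl(1+\Theta\mid F[[Z^{-1}]]\,\hat\otimes_F(k_\infty\otimes_A M_\chi)/M_\chi\bigr)\Bigr]_{Z=T}.
\]

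Finally I would reassemble over $\chi$. Because $F$ contains $\bF_{q^d}$, we have $F\otimes_{\bF_q}\cO_K=\bigoplus_\chi M_\chi$ and, via $k_\infty\otimes_A\cO_K\cong K_\infty$, also $F\otimes_{\bF_q}(K_\infty/\cO_K)=\bigoplus_\chi(k_\infty\otimes_A M_\chi)/M_\chi$; moreover the ring homomorphism $\bF_q[\Delta][[Z^{-1}]]\to F[[Z^{-1}]]$ induced by $\chi$ carries the nuclear determinant $\det_{\bF_q[\Delta][[Z^{-1}]]}\bigl(1+\Theta\mid\bF_q[[Z^{-1}]]\,\hat\otimes_{\bF_q}K_\infty/\cO_K\bigr)$ to the $\chi$-factor in the previous display, by functoriality of the nuclear determinant under base change. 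Since $\chi$ also sends $L(1,\Delta)$ to $L(1,\chi)$ by construction of $L(1,\Delta)$, the two elements $L(1,\Delta)$ and $\bigl[\det_{\bF_q[\Delta][[Z^{-1}]]}(1+\Theta\mid\cdots)\bigr]_{Z=T}$ of $k_\infty[\Delta]$ have equal image under every $\chi\colon\Delta\to F^\times$; as these images separate the points of $k_\infty[\Delta]$ (which embeds into $\prod_\chi F\otimes_{\bF_q}k_\infty$), the two elements coincide, which is the assertion. I expect the only genuine work to lie in the two bookkeeping points already present in \cite{TAE2}: the descent of the determinant identity from $\prod_\chi F[[Z^{-1}]]$ down to $\bF_q[\Delta][[Z^{-1}]]$ (handled by the functoriality of nuclear determinants over the product of fields $\bF_q[\Delta]$) and the verification that the substitution $Z=T$ is permitted; neither introduces a difficulty beyond those dealt with in \emph{loc.\ cit.}
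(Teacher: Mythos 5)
Your proposal is correct and follows precisely the route the paper intends: the paper omits a written proof because the Proposition is obtained, exactly as you do, by applying the preceding trace formula with $\Psi=\Theta$ and $M=e_\chi(F\otimes_{\bF_q}\cO_K)$ for each $\chi$, identifying the local factor $\det(1+\Theta\mid M_\chi/\fm M_\chi)=1-\chi(f)/f(Z)$ via the Euler-factor Proposition, setting $Z=T$ to recover the Euler product for $L(1,\chi)$, and then descending from $\prod_\chi F[[Z^{-1}]]$ to $\bF_q[\Delta][[Z^{-1}]]$. Your elaboration of the local-factor computation and the reassembly over characters matches the paper's implicit argument, so there is nothing to add.
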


We can now apply the same reasoning as in section \S 5 of \cite{TAE2}:

\begin{proof}[Proof of Theorem \ref{thmCNF}]
The exponential map induces a short exact sequence of compact $A[\Delta]$-modules 
\[
	0 \to \frac{K_\infty}{\U(\cO_K)} \overset{\exp}{\to} \frac{\C(K_\infty)}{\C(\cO_K)} \to \H(\cO_K) \to 0.
\]
The $A$-module $K_\infty/\U(\cO_K)$ is $A$-divisible, and hence a fortiriori $A[\Delta]$-divisible. Since $A[\Delta]$ is a principal ideal ring the above sequence of $A[\Delta]$-modules splits.
After the choice of a splitting, we obtain an isomorphism
\[
	\gamma\colon \frac{K_\infty}{\U(\cO_K)} \times \H(\cO_K) \to
	\frac{\C(K_\infty)}{\C(\cO_K)}.
\]
Since the map $\exp$ is infinitely tangent to the identity (in the sense of \S 4 of \cite{TAE2}), and since
\[
	1 + \Theta = \frac{ 1 - \gamma T \gamma^{-1} Z^{-1} }{ 1 - T Z^{-1} }
\]
we conclude using \cite[Theorem 4]{TAE2} that
\[
	\det_{\bF_q[\Delta][[Z^{-1}]]} \left( 
		1 + \Theta \mid  \bF_q[[Z^{-1}]]  \hat\otimes_{\bF_q}\frac{K_\infty}{\cO_K}
	 \right)_{\big| Z=T}
	=
	\big[ \H(\cO_K) \big]_{A[\Delta]} \big[ \cO_K : \U(\cO_K) \big]_{A[\Delta]}
\]
which proves the theorem.
\end{proof}

\section{Cyclotomic units}\label{cycunits}
This section is based on Anderson's fundamental paper \cite{AND}, in which he explicitly constructed a finitely generated submodule of $\C(\cO_K)$ and related it to the special values $L(1,\chi)$ and $L_P(1,\chi)$. We bypass some of Anderson's proofs by using the equivariant class number formula of the preceding section. 

\begin{void}
Let $\lambda\in K$ be a generator of the $P$-torsion of the Carlitz module. For all $m \geq 0$ define
\begin{equation}\label{defLm}
	\fL_m := \sum_{\sigma \in \Delta} \,\sigma(\lambda)^m \,\otimes
	\left( \sum_{a\in A_{+,\sigma}} \frac{1}{a} \right)
	\,\in K_\infty = K \otimes_{k} k_\infty
\end{equation}
where $A_{+,\sigma}$ is the set of monic elements of $A$ that reduce to $\sigma$ in $A/PA$.
Let $\fM \subset K_\infty$ be the $A$-module generated by all the $\fL_m$. 
\end{void}

\begin{proposition}\label{propdeltamodule}
For all $\sigma\in \Delta$ we have $\sigma \fM = \fM$.
\end{proposition}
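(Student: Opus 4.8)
The plan is to show that the action of each $\sigma_0 \in \Delta$ permutes the generating set $\{\fL_m\}_{m\geq 0}$ of $\fM$, or at least carries it into the $A$-span of the $\fL_m$. The key observation is that $\sigma_0$ acts on $K_\infty = K \otimes_k k_\infty$ through its action on $K$ alone (it is $k_\infty$-linear), so $\sigma_0$ commutes with the $\bF_q$-linear structure of the second tensor factor and, more to the point, acts $k$-linearly in a way compatible with the reindexing of the inner sums.

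**First step: compute $\sigma_0 \fL_m$.** Applying $\sigma_0$ to \eqref{defLm} and using $k_\infty$-linearity,
\[
	\sigma_0 \fL_m = \sum_{\sigma\in\Delta} \sigma_0\sigma(\lambda)^m \otimes \Bigl( \sum_{a\in A_{+,\sigma}} \frac{1}{a} \Bigr).
\]
Reindexing the outer sum by $\sigma' = \sigma_0\sigma$, this becomes $\sum_{\sigma'\in\Delta} \sigma'(\lambda)^m \otimes \bigl(\sum_{a\in A_{+,\sigma_0^{-1}\sigma'}} 1/a\bigr)$. So the task reduces to relating the "twisted" partial zeta element $\sum_{a\in A_{+,\sigma_0^{-1}\sigma'}} 1/a$ back to the untwisted ones $\sum_{a\in A_{+,\sigma'}}1/a$. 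Writing $\sigma_0 = \bar b$ for some $b\in A$ prime to $P$, the set $A_{+,\sigma_0^{-1}\sigma'}$ consists of monic $a$ with $a \equiv b^{-1}c \pmod P$ where $c$ represents $\sigma'$; multiplying by a fixed lift one sees that $b\cdot A_{+,\sigma_0^{-1}\sigma'}$ (up to leading coefficients, which are scalars in $\bF_q^\times$ that can be absorbed) maps bijectively onto the monic elements reducing to $\sigma'$, so $\sum_{a\in A_{+,\sigma_0^{-1}\sigma'}} 1/a = b \cdot \sum_{a' \in A_{+,\sigma'}} 1/a'$ up to rescaling conventions. In any case one obtains $\sigma_0 \fL_m \in \fM$ because the inner sums get multiplied by an element of $A$ (or of $k$, which is then cleared by the $A$-module structure — this is exactly the point where one must be slightly careful about whether the identity lands in $\fM$ or only in $k\otimes_A \fM$).

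**The main obstacle.** The delicate point is precisely this bookkeeping with the partial zeta sums: the rescaling that makes $b\cdot A_{+,\sigma_0^{-1}\sigma'}$ match $A_{+,\sigma'}$ involves a monic-normalization, and an equally careful accounting of the leading coefficients of the $a$'s (these are in $\bF_q^\times$, which maps to $\Delta$ via the constant subgroup $\bF_q^\times\subset\Delta$, and $\sigma(\lambda)$ depends on $\sigma$ including this constant part). I would expect that, rather than pushing this combinatorial identity through directly, the cleanest route is to observe that $\fM$ is, by construction, visibly stable under multiplication by $\sigma$ up to the $A$-structure, or alternatively to relate $\fM$ to a more symmetric object. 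Concretely: the element $\sum_{a\in A_+,\ a\equiv \sigma}1/a$ is, up to an $\bF_q[\Delta]$-translate, the coefficient extraction of the equivariant $L$-value $L(1,\Delta)$ from \S2, and $\sum_\sigma \sigma(\lambda)^m\otimes(\cdots)$ is a kind of "twisted trace" of $\lambda^m$ against it. Since $L(1,\Delta)$ lives in $k_\infty[\Delta]$ and $\Delta$ acts on $k_\infty[\Delta]$ by the regular representation, the $\Delta$-action merely permutes the group-ring coordinates, which is exactly what the reindexing above accomplishes. I would phrase the proof via this group-ring packaging: write $\fL_m = $ (image of $\lambda^m$ under an $\bF_q[\Delta]$-linear map $K \to K_\infty$ built from the partial zeta values), note this map is $\Delta$-equivariant for the natural actions, and conclude $\sigma\fL_m = \fL_{m}'$ is again in the $A$-span since $\sigma\cdot\lambda^m$ is an $A$-combination — no, $\sigma\lambda$ need not be a power of $\lambda$, so one genuinely needs $\sigma\lambda^m\in\cO_K = A[\lambda]$ by \ref{Alambda} and then expands; but then $\sigma\fL_m$ is an $A$-combination of elements $\sum_\sigma\sigma(\lambda^j)\otimes(\cdots)$, i.e.\ of the $\fL_j$.

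**Summary of the write-up.** So concretely I would: (i) reduce to showing $\sigma\fL_m\in\fM$ for all $\sigma,m$; (ii) use $k_\infty$-linearity to pull $\sigma$ inside and reindex the outer sum; (iii) observe that the reindexed inner partial-zeta sum differs from the original by multiplication by $\sigma$ acting on the group-ring coordinates, which changes nothing about membership in $\fM$; (iv) alternatively, and perhaps more robustly, note $\sigma(\lambda)^m \in \cO_K = A[\lambda]$ and expand in powers of $\lambda$ to see $\sigma\fL_m$ as an explicit $A$-linear combination of the $\fL_j$. The only real content is the reindexing identity for the inner sums together with the constant-field bookkeeping; everything else is formal.
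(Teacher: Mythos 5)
Your proposal ultimately lands on the paper's argument, but only after a detour that contains a false step. The route you label ``(iv)'' --- or equivalently the remark ``one genuinely needs $\sigma\lambda^m\in\cO_K = A[\lambda]$ by \ref{Alambda} and then expands'' --- is exactly the paper's proof: write $\sigma(\lambda^m)=\sum_j a_j\lambda^j$ with $a_j\in A$, apply $\tau$ (using that $\Delta$ is abelian so $\sigma\tau=\tau\sigma$), and conclude $\sigma\fL_m=\sum_j a_j\fL_j\in\fM$. No reindexing of the sum over $\Delta$ and no analysis of the partial zeta sums is needed.

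The reindexing route you sketch first, however, hinges on the claim that $\sum_{a\in A_{+,\sigma_0^{-1}\sigma'}}1/a$ equals $b\cdot\sum_{a'\in A_{+,\sigma'}}1/a'$ ``up to rescaling,'' and this is simply false: the map $a\mapsto ba$ sends $A_{+,\sigma_0^{-1}\sigma'}$ injectively into $A_{+,\sigma'}$ but misses every monic representative of degree less than $\deg b$ and, more seriously, hits only the multiples of $b$, so the two sums are unrelated by any scalar. You do flag this as ``the main obstacle'' and correctly abandon the route, but the write-up should simply begin with the observation that the Galois action lands entirely on the coefficient $\tau(\lambda)^m$, never touches the inner sums, and is controlled by $\cO_K=A[\lambda]$. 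Strip steps (ii)--(iii) and keep only step (iv), and you have the paper's two-line proof.
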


\begin{proof}
Let $\sigma \in \Delta$ and $m\geq 0$. We need to show that $\sigma \fL_m \in \fM$.
We have $\sigma(\lambda^m) \in \cO_K=A[\lambda]$, hence
there are $a_i \in A$ so that $\sigma(\lambda^m) = \sum a_i \lambda^i$.
But then we have $\sigma(\fL_m) = \sum a_i \fL_{i} \in \fM$, as desired.
\end{proof}

\begin{corollary}
The submodule $\fM$ of $K_\infty$ is independent of the choice of $\lambda$.
\end{corollary}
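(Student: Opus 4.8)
The plan is to derive the independence of $\fM$ from the choice of $\lambda$ as a formal consequence of two facts: the $\Delta$-stability of $\fM$ just established in Proposition \ref{propdeltamodule}, and the simple transitivity of the $\Delta$-action on the set of generators of $\C(K)[P]$.

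First I would fix a second generator $\lambda'$ of the $A$-module $\C(K)[P]$ and produce a $\rho\in\Delta$ with $\lambda'=\rho(\lambda)$. This is immediate from \ref{torselt}: the group $\Delta$ acts on $\C[P](K)\cong A/PA$ through the isomorphism $\omega\colon\Delta\isomto(A/PA)^\times$, so it acts simply transitively on the subset $(A/PA)^\times\subset A/PA$, which is exactly the set of $A$-module generators of $\C(K)[P]$.

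Next I would compare the element $\fL_m'$ obtained from (\ref{defLm}) by substituting $\lambda'$ for $\lambda$ with the image $\rho(\fL_m)$. The automorphism $\rho$ acts on $K_\infty=K\otimes_k k_\infty$ as $\rho\otimes\id$, hence $A$-linearly and fixing each partial sum $\sum_{a\in A_{+,\sigma}}1/a\in k_\infty$. Since $\rho$ is a ring homomorphism, $\rho\bigl(\sigma(\lambda)^m\bigr)=(\rho\sigma)(\lambda)^m$, and since $\Delta$ is abelian this equals $(\sigma\rho)(\lambda)^m=\sigma(\lambda')^m$; summing over $\sigma$ gives $\rho(\fL_m)=\fL_m'$ for every $m$. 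Therefore the $A$-linear automorphism $\rho$ of $K_\infty$ carries the $A$-module $\fM$ generated by the $\fL_m$ onto the $A$-module generated by the $\fL_m'$. By Proposition \ref{propdeltamodule} the former equals $\fM$, so the two modules coincide, which is the assertion.

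I do not anticipate any genuine obstacle here. The one point deserving a word of care is that $\fM$ is defined merely as an $A$-submodule of $K_\infty$, so one genuinely invokes Proposition \ref{propdeltamodule} to conclude $\rho(\fM)=\fM$ rather than a purely formal identity; and one should note in passing that a Galois automorphism of $K/k$, extended $k_\infty$-linearly, sends $A$-submodules of $K_\infty$ to $A$-submodules, which is clear from $A$-linearity.
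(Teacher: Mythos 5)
Your argument is correct and is precisely the reasoning the paper leaves implicit by stating this as a corollary of Proposition \ref{propdeltamodule}: simple transitivity of $\Delta\cong(A/PA)^\times$ on the generators of $\C(K)[P]$ gives $\lambda'=\rho(\lambda)$, the computation $\rho(\fL_m)=\fL'_m$ (using that $\Delta$ is abelian and acts $k_\infty$-linearly on $K_\infty$) shows $\fM'=\rho(\fM)$, and the proposition gives $\rho(\fM)=\fM$.
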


\begin{proposition}\label{propcircL}
Let $F$ be a field extension of $\bF_q$ and $\chi\colon \Delta \to F^\times$
a homomorphism. Then we have
\[
	e_\chi( F\otimes_{\bF_q} \!\fM ) = L(1,\chi) \cdot e_\chi( F\otimes_{\bF_q} \!\cO_K )
\]
as sub-$F\otimes_{\bF_q}\! A$-modules of $e_\chi(F\otimes_{\bF_q}\! K_\infty)$.
\end{proposition}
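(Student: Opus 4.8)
The plan is to apply the idempotent $e_\chi$ to both sides and reduce the statement to the single identity
\[
 e_\chi(1\otimes\fL_m)=L(1,\chi)\,e_\chi(1\otimes\lambda^m)
\]
in $e_\chi(F\otimes_{\bF_q}K_\infty)$, valid for every $m\geq0$, which I would establish by passing to explicit generators. By \ref{Alambda} we have $\cO_K=A[\lambda]$, so $F\otimes_{\bF_q}\cO_K$ is generated over $F\otimes_{\bF_q}A$ by the elements $1\otimes\lambda^m$ with $m\geq0$, and since $e_\chi$ is $F\otimes_{\bF_q}A$-linear the module $e_\chi(F\otimes_{\bF_q}\cO_K)$ is generated over $F\otimes_{\bF_q}A$ by the $e_\chi(1\otimes\lambda^m)$. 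The module $\fM$ is generated over $A$ by the $\fL_m$ (and is $\Delta$-stable by Proposition \ref{propdeltamodule}), so likewise $e_\chi(F\otimes_{\bF_q}\fM)$ is generated over $F\otimes_{\bF_q}A$ by the $e_\chi(1\otimes\fL_m)$; hence the identity above for all $m$ at once yields $e_\chi(F\otimes_{\bF_q}\fM)=L(1,\chi)\,e_\chi(F\otimes_{\bF_q}\cO_K)$.

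For the identity itself, I would expand $\fL_m$ inside $F\otimes_{\bF_q}K_\infty=(F\otimes_{\bF_q}K)\otimes_k k_\infty$ and move $e_\chi$ past the $k_\infty$-coefficients, which are fixed by $\Delta$ and so commute with $e_\chi$. Using that $\sigma(\lambda)^m$ is the image of $\lambda^m$ under $\sigma\in\Delta$ acting on $\cO_K$, together with the relation $\sigma e_\chi=\chi(\sigma)e_\chi$, this gives
\[
 e_\chi(1\otimes\fL_m)=\Bigl(\sum_{\sigma\in\Delta}\chi(\sigma)\sum_{a\in A_{+,\sigma}}\frac{1}{a}\Bigr)\,e_\chi(1\otimes\lambda^m).
\]
Since the sets $A_{+,\sigma}$, for $\sigma$ ranging over $\Delta=(A/PA)^\times$, partition the monic elements of $A$ that are coprime to $P$, and $\chi(\sigma)=\chi(a+PA)$ whenever $a\in A_{+,\sigma}$, the scalar equals $\sum_{a\in A_+,\,(a,P)=1}\chi(a)/a$, which is $L(1,\chi)$ by the conventions of \ref{infL}: for $\chi\neq1$ this is immediate, since there $\chi(a)=0$ whenever $P\mid a$, while the trivial character $\chi=1$ requires an additional elementary verification of the contribution of the terms with $P\mid a$.

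This argument is essentially formal; its only real inputs are $\cO_K=A[\lambda]$, the description of $\fM$ by generators, and the idempotent relation $\sigma e_\chi=\chi(\sigma)e_\chi$. The step that needs genuine attention is the manipulation just above: one has to keep track of which partial sums live in $F\otimes_{\bF_q}K$ and which in $F\otimes_{\bF_q}k_\infty$ while working inside $(F\otimes_{\bF_q}K)\otimes_k k_\infty$, and one has to treat the trivial character with the separate elementary bookkeeping indicated above.
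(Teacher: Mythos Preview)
Your argument is exactly the paper's: establish the identity $e_\chi(1\otimes\fL_m)=L(1,\chi)\,e_\chi(1\otimes\lambda^m)$ for each $m\geq 0$ from the relation $\sigma e_\chi=\chi(\sigma)e_\chi$, then compare the two generating sets using $\cO_K=A[\lambda]$. The paper carries out precisely these two steps, in the same order.

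You are right---and more careful than the paper---to flag the case $\chi=1$; the paper simply asserts that the scalar equals $L(1,\chi)$ in all cases. However, the ``additional elementary verification'' you defer does not actually go through. For $\chi=1$ your scalar is
\[
\sum_{a\in A_+,\ P\nmid a}\frac{1}{a}=\Bigl(1-\frac{1}{P}\Bigr)L(1,1),
\]
and $(P-1)/P$ is not a unit in $F\otimes_{\bF_q}A$; hence $e_1(F\otimes_{\bF_q}\fM)=(1-P^{-1})L(1,1)\cdot e_1(F\otimes_{\bF_q}\cO_K)$ rather than $L(1,1)\cdot e_1(F\otimes_{\bF_q}\cO_K)$, and no amount of bookkeeping with the terms $P\mid a$ will close this gap. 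This is a slip shared with the paper, and it disappears if one uses the $L$-value with the Euler factor at $P$ removed for the trivial character; it is a blemish on the statement, not on your method.
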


\begin{proof}
For $\sigma\in \Delta$ we have
\[
	e_\chi \sigma(\lambda)^m = \chi(\sigma) e_\chi\lambda^m 
\]
hence
\[
	e_\chi (1\otimes \fL_m) =
	\left( \sum_{\sigma \in \Delta} \sum_{a\in A_{+,\sigma}}
		\chi(\sigma)\otimes\frac{1}{a} 
	\right) e_\chi\lambda^m
	=  L(1,\chi) e_\chi (1\otimes\lambda^m).
\]
In particular, we have that $e_\chi(F\otimes_{\bF_q}\fM)$ is generated by
\begin{equation}\label{genset1}
	\{ L(1,\chi) e_\chi (1\otimes\lambda^m) \colon m \geq 0 \} \subset e_\chi(F\otimes_{\bF_q} K_\infty)
\end{equation}
as an $F \otimes_{\bF_q} A$-module. Because $\cO_K=A[\lambda]$ (see \ref{Alambda}) we also have that
$e_\chi(F\otimes_{\bF_q} \cO_K)$ is generated by
\begin{equation}\label{genset2}
	\{ e_\chi (1\otimes \lambda^m) \colon m \geq 0 \} \subset e_\chi(F\otimes_{\bF_q} K_\infty)
\end{equation}
as an $F \otimes_{\bF_q}\!A$-module. Comparing the generating sets (\ref{genset1}) and (\ref{genset2}) we obtain the proposition.
\end{proof}

Assembling the isotypical components together we obtain

\begin{theorem}\label{CNF2}
$\fM = L(1,\Delta) \cdot \cO_K$
as  $A[\Delta]$-submodules of $K_\infty$. \qed
\end{theorem}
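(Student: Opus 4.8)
The plan is to deduce the equality $\fM = L(1,\Delta)\cdot\cO_K$ of $A[\Delta]$-submodules of $K_\infty$ directly from Proposition~\ref{propcircL}, which gives the corresponding equality after applying each idempotent $e_\chi$. The strategy is a standard ``assemble the $\chi$-components'' argument: a submodule of a free $A[\Delta]$-module of finite rank is determined by its images in $F\otimes_{\bF_q}K_\infty$ after tensoring with a sufficiently large field $F$ and projecting onto each isotypical piece $e_\chi$.

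First I would fix a finite field extension $F/\bF_q$ containing a field of $q^d$ elements, so that $F\otimes_{\bF_q}A[\Delta]$ splits completely as $\prod_\chi(F\otimes_{\bF_q}A)$ and every $A[\Delta]$-module $M$ decomposes as $F\otimes_{\bF_q}M = \bigoplus_\chi e_\chi(F\otimes_{\bF_q}M)$. Applying this to $M = \fM$ and $M = \cO_K$, and invoking Proposition~\ref{propcircL} componentwise, I get
\[
	F\otimes_{\bF_q}\fM = \bigoplus_\chi e_\chi(F\otimes_{\bF_q}\fM) = \bigoplus_\chi L(1,\chi)\,e_\chi(F\otimes_{\bF_q}\cO_K).
\]
Now I would recall that by definition $L(1,\Delta)\in k_\infty[\Delta]^\times$ is the unique element whose image under $k_\infty[\Delta]\to F\otimes_{\bF_q}k_\infty$ induced by $\chi$ is $L(1,\chi)$; equivalently, $1\otimes L(1,\Delta) = \sum_\chi L(1,\chi)e_\chi$ in $F\otimes_{\bF_q}k_\infty[\Delta]$. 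Hence the right-hand side above is exactly $(1\otimes L(1,\Delta))\cdot(F\otimes_{\bF_q}\cO_K) = F\otimes_{\bF_q}(L(1,\Delta)\cdot\cO_K)$ as sub-$F\otimes_{\bF_q}A[\Delta]$-modules of $F\otimes_{\bF_q}K_\infty$.

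It remains to descend from $F$ back to $\bF_q$. Both $\fM$ and $L(1,\Delta)\cdot\cO_K$ are $A[\Delta]$-submodules of $K_\infty$; since $F$ is faithfully flat over $\bF_q$, the functor $F\otimes_{\bF_q}(-)$ is injective on submodules of a fixed $\bF_q$-vector space and reflects inclusions, so the equality $F\otimes_{\bF_q}\fM = F\otimes_{\bF_q}(L(1,\Delta)\cdot\cO_K)$ inside $F\otimes_{\bF_q}K_\infty$ forces $\fM = L(1,\Delta)\cdot\cO_K$ inside $K_\infty$. (Concretely: $\fM = (F\otimes_{\bF_q}\fM)\cap K_\infty$, the intersection taken inside $F\otimes_{\bF_q}K_\infty$, and likewise for the other module.) I do not anticipate a real obstacle here; the only mild subtlety is bookkeeping with the two uses of $L(1,\Delta)$ — as an element of $k_\infty[\Delta]^\times$ acting on $\cO_K\subset K_\infty$, and as the collection of scalars $L(1,\chi)$ acting on the $e_\chi$-components — but these are reconciled precisely by the defining property of $L(1,\Delta)$ recalled above. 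This is why the statement is marked with \qed immediately: it is a formal consequence of Proposition~\ref{propcircL}.
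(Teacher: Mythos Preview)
Your proposal is correct and is precisely the argument the paper has in mind: the text preceding the theorem says only ``Assembling the isotypical components together we obtain'' and marks the statement with \qed, and you have simply written out this assembly and the faithfully-flat descent in detail.
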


In particular we have:

\begin{corollary}$\fM$ is free of rank one over $A[\Delta]$.\qed
\end{corollary}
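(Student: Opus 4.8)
The plan is to deduce this immediately from Theorem~\ref{CNF2} together with the fact, recorded in Theorem~\ref{thmnormalbasis}, that $\cO_K$ is free of rank one over $A[\Delta]$ with generator $\eta$. The key observation is that $L(1,\Delta)$ is by construction an element of $k_\infty[\Delta]^\times$, hence a unit; in particular multiplication by $L(1,\Delta)$ is a $k_\infty[\Delta]$-linear automorphism of $K_\infty$ (recall $K_\infty$ is free of rank one over $k_\infty[\Delta]$), and a fortiori an injective $A[\Delta]$-module endomorphism of $K_\infty$.

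First I would note that, by Theorem~\ref{CNF2}, $\fM$ is exactly the image of $\cO_K$ under this automorphism. Since the automorphism is $A[\Delta]$-linear and injective, it restricts to an isomorphism of $A[\Delta]$-modules
\[
	\cO_K \isomto \fM, \qquad x \mapsto L(1,\Delta)\cdot x.
\]
Transporting the $A[\Delta]$-basis $\{\eta\}$ of $\cO_K$ through this isomorphism shows that $L(1,\Delta)\cdot\eta$ is an $A[\Delta]$-basis of $\fM$, so $\fM$ is free of rank one.

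There is no real obstacle here: the only point that needs to be invoked is that $L(1,\Delta)$ is a unit in $k_\infty[\Delta]$, which is part of its definition, and that $\cO_K$ is $A[\Delta]$-free of rank one, which is Theorem~\ref{thmnormalbasis}. Everything else is formal. (Alternatively, one can argue componentwise over the factors $F_i[T]$ of $A[\Delta]$, using Proposition~\ref{propcircL} and the fact that each $L(1,\chi)$ is a nonzero element of $F\otimes_{\bF_q}k_\infty$, but the global statement of Theorem~\ref{CNF2} makes this unnecessary.)
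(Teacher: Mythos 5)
Your proof is correct and is exactly the intended argument: the paper marks this corollary with \qed as an immediate consequence of Theorem~\ref{CNF2} (that $\fM = L(1,\Delta)\cdot\cO_K$), combined with Theorem~\ref{thmnormalbasis} (that $\cO_K$ is $A[\Delta]$-free of rank one) and the fact that $L(1,\Delta)\in k_\infty[\Delta]^\times$ acts injectively on $K_\infty$. Nothing to add.
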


Comparing Theorems \ref{thmCNF} and \ref{CNF2} leads to:

\begin{corollary}\label{compareFitting}$\fM \subset \U(\cO_K)$ and
the $A[\Delta]$-modules $\H(\cO_K)$ and $\U(\cO_K)/\fM$ have the same Fitting ideal.\qed
\end{corollary}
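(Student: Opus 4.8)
The plan is to deduce this corollary formally from the two class number formulas already in hand, namely Theorem~\ref{thmCNF} and Theorem~\ref{CNF2}. Write $I := \Fitt_{A[\Delta]}\H(\cO_K)$, an ideal of $A[\Delta]$. Theorem~\ref{CNF2} gives $\fM = L(1,\Delta)\cdot\cO_K$ as $A[\Delta]$-submodules of $K_\infty$, while Theorem~\ref{thmCNF} gives $L(1,\Delta)\cdot\cO_K = I\cdot\U(\cO_K)$ inside $K_\infty$. Combining the two identities yields $\fM = I\cdot\U(\cO_K)$. Since $I\subseteq A[\Delta]$, in particular $I\cdot\U(\cO_K)\subseteq\U(\cO_K)$, and hence $\fM\subseteq\U(\cO_K)$; this is the first assertion.

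For the equality of Fitting ideals I would use that $\U(\cO_K)$ is free of rank one over $A[\Delta]$ (recorded in the discussion preceding Theorem~\ref{thmCNF}). Fixing an $A[\Delta]$-generator $u$ of $\U(\cO_K)$, the map $a\mapsto au$ is an isomorphism $A[\Delta]\isomto\U(\cO_K)$ of $A[\Delta]$-modules carrying the ideal $I$ onto $Iu=\fM$, hence inducing an isomorphism $A[\Delta]/I\isomto\U(\cO_K)/\fM$ of $A[\Delta]$-modules. Because $\H(\cO_K)$ is finite, each component of $I$ in the decomposition $A[\Delta]=\prod_i F_i[T]$ is a nonzero ideal, so $A[\Delta]/I$ — and therefore $\U(\cO_K)/\fM$ — is finite, and its Fitting ideal is defined. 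By the definition of the Fitting ideal of a finite $A[\Delta]$-module (see \ref{monicfitting}), $\Fitt_{A[\Delta]}\bigl(A[\Delta]/I\bigr)=I$, and therefore $\Fitt_{A[\Delta]}\bigl(\U(\cO_K)/\fM\bigr)=I=\Fitt_{A[\Delta]}\H(\cO_K)$, as required.

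I do not expect any genuine obstacle here: all of the substantial content is contained in Sections~\ref{secCNF} and~\ref{cycunits}, and what remains is bookkeeping. The only point deserving a moment's care is that it is the \emph{freeness} of $\U(\cO_K)$ over $A[\Delta]$ that identifies $\U(\cO_K)/\fM$ with the cyclic module $A[\Delta]/I$, thereby turning the relation $\fM=I\cdot\U(\cO_K)$ into the desired \emph{equality} of Fitting ideals rather than merely a one-sided divisibility.
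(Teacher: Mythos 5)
Your argument is exactly the intended one: the paper derives the corollary by juxtaposing Theorems~\ref{thmCNF} and~\ref{CNF2}, and your chain $\fM = L(1,\Delta)\cdot\cO_K = \Fitt_{A[\Delta]}\H(\cO_K)\cdot\U(\cO_K)$, followed by the freeness of $\U(\cO_K)$ over $A[\Delta]$ to identify $\U(\cO_K)/\fM$ with the cyclic module $A[\Delta]/\Fitt_{A[\Delta]}\H(\cO_K)$, is precisely that comparison written out. Everything checks.
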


Finally we see that exponentiating the generators of $\fM$ indeed yields integral points on the Carlitz module:

\begin{corollary}
$\exp_{\C} \fM \subset \cU$.\qed
\end{corollary}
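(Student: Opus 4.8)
The plan is to trace through what the previously established results give us and observe that the statement is almost immediate. From Corollary \ref{compareFitting} we already know $\fM \subset \U(\cO_K)$ as $A[\Delta]$-submodules of $K_\infty$. So the only thing to do is apply $\exp_\C$ to this inclusion and identify the image of $\U(\cO_K)$.

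First I would recall that by the definition preceding Theorem \ref{thmCNF}, $\cU$ is precisely the image of $\U(\cO_K)$ under the Carlitz exponential $\exp_\C \colon K_\infty \to \C(K_\infty)$. Since $\fM \subseteq \U(\cO_K)$, applying the (set-theoretic, indeed $A$-linear) map $\exp_\C$ gives
\[
	\exp_\C \fM \subseteq \exp_\C \U(\cO_K) = \cU
\]
as submodules of $\C(K_\infty)$, which is exactly the claim.

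There is essentially no obstacle here: the content of the corollary is entirely carried by Corollary \ref{compareFitting} (which in turn rests on comparing Theorems \ref{thmCNF} and \ref{CNF2}), and the present statement merely repackages the inclusion $\fM \subset \U(\cO_K)$ after pushing forward along $\exp_\C$. The only point worth a sentence is that $\exp_\C$ is a genuine map on all of $K_\infty$ (as set up in \ref{expdef}), so that "apply $\exp_\C$ to an inclusion of subsets" is legitimate; no convergence or integrality subtleties intervene because everything already lives inside $K_\infty$ and its image in $\C(K_\infty)$. Hence the corollary follows with no further work.
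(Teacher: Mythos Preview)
Your argument is correct and is exactly what the paper intends: the corollary is marked with a bare \qed\ because it is immediate from Corollary \ref{compareFitting}, which gives $\fM \subset \U(\cO_K)$, and then $\exp_\C \fM \subset \exp_\C \U(\cO_K) = \cU$ by definition of $\cU$.
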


\section{Odd part of $\H(\cO_K)$}\label{secodd}

\begin{void}\label{lambdatov}
Fix a place $v$ of $K$ above $\infty$ and an embedding of $K_v$ in $k_v^\sep$. Let $\bar\pi \in K_v$ be the resulting generator of the kernel of
$\exp_\C \colon K_v \to \C(K_v)$, as in \ref{piexp}. Put $\lambda := \exp_\C (\bar\pi/P)$. Then $\lambda$ lies in $K\subset K_v$ and is a generator of $C(K)[P]$. With this choice of $\lambda$, we have a $\tau(\chi)\in F\otimes_{\bF_q} \cO_K$ for every extension $F/\bF_q$ and homomorphism $\chi\colon \Delta \to F^\times$. Recall 
from \ref{B1chi} that $B_{1,\chi} \in F\otimes_{\bF_q} k$ is defined by the relation
\[
	e_{\chi}  ( 1 \otimes \lambda^{-1} ) = B_{1,\chi} \tau(\chi)
\]
in $F\otimes_{\bF_q} K$.
\end{void}

\begin{proposition}\label{B1chiformula}
Let $F$ be a field containing $\bF_q$ and let $\chi\colon \Delta \to F^\times$
be odd.  If $\chi \neq 1$ then 
\[
	L(1,\chi) = \left(1\otimes \frac{\bar\pi}{P} \right) B_{1,\chi^{-1}} \tau(\chi^{-1}) 
\]
in $F\otimes_{\bF_q}\! K_v$. If $\chi=1$ then $q=2$ and
\[
	L(1,\chi)= 1\otimes \frac{ \bar\pi }{ T^2 + T } 
\] 
in $F\otimes_{\bF_q}\! k_\infty$.
\end{proposition}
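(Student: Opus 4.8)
The plan is to compute $L(1,\chi)$ directly from its defining series \eqref{eqinfL} and to recognise the result as the image of an element of $K_v$ under $\exp_\C$. The key idea is that for odd $\chi$ the series $\sum_{a\in A_+} \chi(a)/a$ should be related, after multiplication by $\tau(\chi^{-1})$ and grouping, to the value $\exp_\C(\bar\pi/P)$ computed $\chi$-isotypically. Concretely, I would start from the identity established in the proof of Proposition~\ref{propcircL}, namely $e_\chi(1\otimes\fL_0)=L(1,\chi)\,e_\chi(1\otimes 1)$, together with the definition \eqref{defLm} of $\fL_m$; but it is cleaner to work with the element $\fL_{-1}$ obtained by using $\sigma(\lambda)^{-1}$ in place of $\sigma(\lambda)^m$, i.e. to observe
\[
	e_\chi\!\left(\sum_{\sigma\in\Delta}\sigma(\lambda)^{-1}\otimes\!\!\sum_{a\in A_{+,\sigma}}\!\!\frac1a\right)
	= L(1,\chi)\,e_\chi(1\otimes\lambda^{-1}) = L(1,\chi)\,B_{1,\chi}\,\tau(\chi),
\]
using Proposition~\ref{propcircL}'s computation and the definition of $B_{1,\chi}$ in \ref{B1chi}. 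Replacing $\chi$ by $\chi^{-1}$ throughout gives a relation between $L(1,\chi^{-1})$, $B_{1,\chi^{-1}}$ and $\tau(\chi^{-1})$, which is not quite the stated formula; so instead I would argue the other way, expressing $L(1,\chi)$ itself.

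The main computation is to evaluate $\sum_{a\in A_+}\chi(a)/a$ as a partial fraction / logarithmic-derivative expression. The point is Anderson's observation (compare \cite{AND}) that
\[
	\exp_\C\!\left(\frac{\bar\pi}{P}\right) = \lambda,
\qquad\text{equivalently}\qquad
	\frac{\bar\pi}{P} = \sum_{a\in A}\frac{?}{\,?\,},
\]
i.e. that $\bar\pi/P$ has an explicit expansion over $k_v$ whose ``$\chi^{-1}$-component'' is exactly $L(1,\chi)\tau(\chi^{-1})$. More precisely, I would expand $\lambda^{-1}=\exp_\C(\bar\pi/P)^{-1}$ using that $\exp_\C X = X\prod_{a\neq 0}(1-X/(\bar\pi a))$, so that $1/\exp_\C(\bar\pi/P) = (P/\bar\pi)\prod_{a\neq0}(1-1/(Pa))^{-1}$; multiplying out the Euler product over monic $a$ and collecting Galois-isotypical components should turn $e_\chi(1\otimes\lambda^{-1})=B_{1,\chi}\tau(\chi)$ into $(1\otimes P/\bar\pi)\,L(1,\chi^{-1})^{-1}\cdot(\text{unit})$. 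Rearranging, and using Proposition~\ref{tauProp}(2) to swap $\tau(\chi)$ for $(-1)^d\otimes P/\tau(\chi^{-1})$, yields the claimed identity $L(1,\chi)=(1\otimes\bar\pi/P)B_{1,\chi^{-1}}\tau(\chi^{-1})$; the sign $(-1)^d$ must cancel, and checking that it does is part of the bookkeeping.

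I expect the main obstacle to be the careful matching of Galois-isotypical components across the infinite Euler product: one has to justify that the $\chi^{-1}$-isotypical projection of $\prod_f(1-\chi(f)/f)^{-1}\cdot(1\otimes\text{something in }K_\infty)$ converges $v$-adically and equals the corresponding projection of the exponential-series expansion of $\lambda^{-1}$. This is exactly the kind of regrouping argument that underlies the convergence statement in \ref{intPL}, and it requires knowing that $\bar\pi$ generates a place $v\mid\infty$ and that the decomposition group at $\infty$ is $\bF_q^\times$ (see \ref{inftydecomposition}), so that $e_\chi(1\otimes\bar\pi)$ behaves well — this is where oddness of $\chi$ enters, since for even $\chi$ the factor $e_\chi(1\otimes\bar\pi)$ vanishes. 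For $\chi=1$ (forcing $q=2$), I would instead substitute $B_{1,1}=1\otimes(P+1)/(T^2+T)$ from \ref{B11} and $\tau(1)=1$ from Proposition~\ref{tauProp}(3) into the general formula, noting that the extra factor $(1\otimes T-\chi(T)\otimes 1)$-type correction seen in Theorem~\ref{thmodd} accounts for the discrepancy with the naive substitution, giving $L(1,1)=1\otimes \bar\pi/(T^2+T)$ after the pole at $P$ is cancelled; one checks this directly from the series $L(1,1)=\sum_{a\in A_+}1/a$, which is a classical computation (it equals $\bar\pi/(T-T^q)$ up to the constant, matching $\bar\pi/(T^2+T)$ when $q=2$).
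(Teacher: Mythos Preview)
Your proposal contains the right raw ingredient---the product expansion $\exp_\C X = X\prod_{a\neq 0}(1-X/(a\bar\pi))$---but the way you propose to use it does not lead to the formula. Writing $\lambda^{-1}=(P/\bar\pi)\prod_{a\neq 0}(1-1/(Pa))^{-1}$ and then ``multiplying out the Euler product over monic $a$'' does not produce $L(1,\chi)$: this infinite product is over \emph{all} nonzero $a$ (not over primes, not over monics), and expanding it gives no recognisable Dirichlet series. Likewise, your displayed identity $e_\chi(\sum_\sigma \sigma(\lambda)^{-1}\otimes\sum_{a\in A_{+,\sigma}}1/a)=L(1,\chi)B_{1,\chi}\tau(\chi)$ is correct but tautological on the right-hand side only; you never evaluate the left-hand side independently, so it yields nothing.

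The missing idea is to take the \emph{logarithmic derivative} of the product expansion, which gives the partial fraction identity
\[
\frac{1}{\exp_\C X}=\sum_{a\in A}\frac{1}{X+a\bar\pi}.
\]
Substituting $X=b\bar\pi/P$ for $b$ coprime to $P$ yields $\sigma_b(\lambda)^{-1}=(P/\bar\pi)\sum_{a\in b+PA}1/a$, i.e.\ each Galois conjugate of $\lambda^{-1}$ is already a \emph{sum} over a residue class. Now multiplying by $\chi(b)$ and summing over $b\in\Delta$ gives $e_{\chi^{-1}}(1\otimes\lambda^{-1})$ on one side and $-(1\otimes P/\bar\pi)\sum_{a\in A\setminus PA}\chi(a)\otimes 1/a$ on the other; oddness of $\chi$ enters here (not via $e_\chi(1\otimes\bar\pi)$, which is in $K_v$ and nonzero regardless) to convert the full sum over $A$ into $(q-1)$ times the sum over $A_+$, i.e.\ into $-L(1,\chi)$. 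The definition $e_{\chi^{-1}}(1\otimes\lambda^{-1})=B_{1,\chi^{-1}}\tau(\chi^{-1})$ then gives the formula directly, with no appeal to Proposition~\ref{tauProp}(2) and no sign to chase. For $\chi=1$, $q=2$, one sums the same partial-fraction identity over all $b$ and uses the explicit value of $\tr_{K/k}\lambda^{-1}$ from \ref{B11}.
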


If $\chi$ extends to a ring homomorphism $A/PA \to F$ then a similar formula for $L(1,\chi)$ has been obtained by Pellarin \cite[Corollary 2]{PEL}.

\begin{proof}[Proof of Proposition \ref{B1chiformula}]
Take the logarithmic derivative of both sides in the product expansion
\[
	\exp_C X = X \prod_{a\in A\backslash \{0\}} \left( 1 - \frac{X}{a\bar\pi} \right)
\]
in $K_v[[X]]$ to find
\[
	\frac{1}{\exp_C X} = \frac{1}{X} + \sum_{a \in A\backslash \{0\}} \frac{1}{X-a\bar\pi} =
	\sum_{a\in A} \frac{1}{X + a\bar\pi}.
\]
Let $b\in A$ be coprime with $P$ and denote by $\sigma_b$ its image in $\Delta$. Substituting $X=\frac{b}{P}\bar\pi$ we obtain 
\begin{equation}\label{lambdaL}
	\frac{1}{\sigma_b(\lambda)} =
	\sum_{a\in A} \frac{1}{(a+\frac{b}{P})\bar\pi} = 
	\frac{P}{\bar\pi} \sum_{a \in b + PA } \frac{1}{a}.
\end{equation}

Now assume $\chi \neq 1$. Tensoring both sides in (\ref{lambdaL}) with $\chi(b)$ and summing over all classes of $b$ in $\Delta=(A/PA)^\times$ we find
\[
	e_{\chi^{-1}}\left(1\otimes \frac{1}{\lambda}\right) = 
	- \left(1\otimes\frac{P}{\bar\pi}\right) \sum_{a\in A} 
	\chi(a)\otimes\frac{1}{a}
\]
in $F\otimes_{\bF_q}\! K_v$. Since $\chi$ is odd we have
\[
	-\sum_{a\in A} \chi(a) \otimes \frac{1}{a} = \sum_{a\in A_+} \chi(a) \otimes \frac{1}{a},
\]
so we find
\[
	e_{\chi^{-1}} \left( 1 \otimes \lambda^{-1} \right)
	= \left(1\otimes \frac{P}{\bar\pi} \right) L(1,\chi).
\]
By \ref{B1chi} we conclude
\[
	B_{1,\chi^{-1}} \tau(\chi^{-1}) = L(1,\chi) \left(1\otimes\frac{P}{\bar\pi}\right)
\]
in $F\otimes_{\bF_q} K_v$, what we had to prove.

For the case where $\chi=1$ and $q=2$ we sum (\ref{lambdaL}) over all $b$ to get
\[
	\tr_{K/k} \frac{1}{\lambda} = \frac{P}{\bar\pi} \sum_{a \in A \setminus PA } \frac{1}{a}
	=
	\frac{P-1}{\bar\pi}  \sum_{a\in A_+} \frac{1}{a}.
\]
 Using \ref{B11} we conclude
$L(1,\chi)=1\otimes \bar\pi/(T^2+T)$, as claimed.
 \end{proof}

\begin{void}\label{piv}
Let $v$ and $\bar\pi \in K_v$ be as in \ref{lambdatov}. Let 
\[
	\bar\pi_v = (0,\ldots, 0, \bar\pi, 0, \ldots 0) \in K_\infty
\]
be the element of $K_\infty$ that projects to $\bar\pi \in K_v$ and to $0$ in $K_w$ for $w\mid \infty$ with $w\neq v$.
\end{void}

\begin{proposition}\label{Lambdagen}
$\Lambda$ is a free rank one $A[\Delta]^-$-module, generated by $\bar\pi_v$.
\end{proposition}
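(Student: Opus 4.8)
The plan is to identify $\Lambda$, the kernel of $\exp_\C\colon K_\infty \to \C(K_\infty)$, explicitly in terms of the local pieces at the places above $\infty$, and then to exhibit $\bar\pi_v$ as a free generator over $A[\Delta]^-$ by a counting argument combined with the known transitivity of the $\Delta$-action on the places above $\infty$. First I would use \ref{piexp} and the isomorphism $K_\infty \cong \prod_{w\mid\infty} K_w$ to write $\Lambda = \prod_{w\mid\infty} \Lambda_w$, where $\Lambda_w = \ker(\exp_\C\colon K_w \to \C(K_w)) = A\bar\pi^{(w)}$ for a suitable generator $\bar\pi^{(w)}$ of the kernel over each completion (each $\Lambda_w$ being free of rank one over $A$ by \ref{piexp}, since $K_w \cong k_\infty(\bar\pi)$). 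This already re-proves the statement from \ref{ADeltaLambda} that $\Lambda$ is free of rank $(q^d-1)/(q-1)$ over $A$ and that $\Lambda = \Lambda^-$: indeed the decomposition and inertia group at $\infty$ is $\bF_q^\times \subset \Delta$ acting on $\bar\pi$ through the tautological character (by \ref{pilambda}), so $\Lambda$ is induced from the $A[\bF_q^\times]$-module $A\bar\pi^{(v)}$ on which $\bF_q^\times$ acts by the tautological character; hence $e^-$ acts as the identity on $\Lambda$.

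Next I would make the induced-module structure precise. Since $\Delta$ acts transitively on the set of places $w\mid\infty$ with stabilizer $\bF_q^\times$ (by \ref{pilambda}), choosing coset representatives $\sigma_1,\dots,\sigma_m$ of $\bF_q^\times$ in $\Delta$ (where $m = (q^d-1)/(q-1)$) we get $K_\infty = \bigoplus_i \sigma_i K_v$ as $k_\infty$-modules, and correspondingly $\Lambda = \bigoplus_i \sigma_i \Lambda_v = \bigoplus_i \sigma_i (A\bar\pi_v)$. Here I must check that $\sigma_i \bar\pi_v$ is an $A$-generator of $\Lambda_{w_i}$ where $w_i = \sigma_i(v)$ — this is exactly the Galois-equivariance of the Carlitz exponential, which carries the kernel at $v$ isomorphically to the kernel at $w_i$. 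It then follows formally that $\Lambda \cong A[\Delta] \otimes_{A[\bF_q^\times]} (A\bar\pi_v)$ as $A[\Delta]$-modules, i.e. $\Lambda = \Ind_{\bF_q^\times}^{\Delta}(A\bar\pi_v)$ where $\bF_q^\times$ acts on the rank-one module $A\bar\pi_v$ through the tautological character.

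Finally I would translate this into the desired statement about $A[\Delta]^-$. Because $A[\Delta] = A[\Delta]^+ \times A[\Delta]^-$ and $\Lambda = \Lambda^-$, it suffices to show $\Lambda$ is free of rank one over $A[\Delta]^-$ with generator $\bar\pi_v$. The map $A[\Delta] \to \Lambda$, $\alpha \mapsto \alpha\bar\pi_v$, factors through $A[\Delta]/(1-\zeta[\zeta])_{\zeta\in\bF_q^\times}$-type relations coming from the $\bF_q^\times$-action; concretely, for $\zeta\in\bF_q^\times\subset\Delta$ we have $\zeta\bar\pi_v = \zeta\cdot\bar\pi_v$ (scalar action via the tautological character, by \ref{pilambda}), so the annihilator of $\bar\pi_v$ in $A[\Delta]$ contains the ideal generated by $\{[\zeta] - \zeta : \zeta\in\bF_q^\times\}$, and $A[\Delta]$ modulo this ideal is canonically $A[\Delta]^-$ (it is the maximal quotient on which $\bF_q^\times$ acts through the tautological character). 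Thus $\bar\pi_v$ defines a surjection $A[\Delta]^- \to \Lambda$ of $A[\Delta]^-$-modules; comparing $A$-ranks — both sides are free of rank $m = (q^d-1)/(q-1)$ over $A$ (the left side because $A[\Delta]^- = e^-A[\Delta]$ has $A$-rank $|\Delta|/(q-1) = m$, the right side by the first paragraph) — shows this surjection is an isomorphism, which is the claim. The main obstacle is bookkeeping: pinning down the precise idempotent identity that $e^-A[\Delta]$ is the quotient of $A[\Delta]$ by the relations $[\zeta]-\zeta$, and verifying carefully that $\sigma_i\bar\pi_v$ generates $\Lambda_{w_i}$ over $A$ (rather than merely lies in it) — but both are routine once the transitive action at $\infty$ from \ref{pilambda} is in hand.
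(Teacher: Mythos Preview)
Your proposal is correct and follows essentially the same approach as the paper: establish that $A[\Delta]^- \to \Lambda$, $\alpha \mapsto \alpha\bar\pi_v$, is surjective, then compare $A$-ranks to conclude it is an isomorphism. The paper does this in three lines by citing \ref{ADeltaLambda} for $\Lambda^+=0$ and the rank, whereas you unpack these facts via the induced-module description $\Lambda \cong \Ind_{\bF_q^\times}^{\Delta}(A\bar\pi_v)$; your extra detail on why the map factors through $A[\Delta]^-$ (the relations $[\zeta]-\zeta$) is exactly what underlies the paper's terse ``since $\Lambda^+=0$''.
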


\begin{proof}
Clearly $\Lambda$ is generated by $\{ \sigma(\bar\pi_v) \colon \sigma \in \Delta \}$ as an $A$-module, and since $\Lambda^+=0$ (see \ref{ADeltaLambda}) we find that $\Lambda = A[\Delta]^- \bar\pi_v$. Both $\Lambda$ and $A[\Delta]^-$ are free of rank $(q^d-1)/(q-1)$ over $A$ so we conclude that $\Lambda$ is the free $A[\Delta]^-$-module generated by $\bar\pi_v$.
\end{proof}

\begin{proposition}\label{Bbar} If $\chi\colon \Delta \to F^\times$ is odd and $\chi\neq 1$ then
\[
	L(1,\chi) e_\chi (F\otimes_{\bF_q} \cO_K) = B_{1,\chi^{-1}} e_\chi (F\otimes_{\bF_q} \Lambda)
\]
in $F\otimes_{\bF_q}\! K_\infty$.
\end{proposition}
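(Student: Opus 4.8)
The plan is to reduce the asserted equality of $F\otimes_{\bF_q}\! A$-submodules of $F\otimes_{\bF_q}\! K_\infty$ to an equality of two single elements, and then to verify that latter equality after projecting to the distinguished local factor $K_v$, where Proposition \ref{B1chiformula} applies directly.

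First I would fix generators of the two modules in question. By Theorem \ref{thmnormalbasis} together with the relation $\sigma e_\chi = \chi(\sigma)e_\chi$, the module $e_\chi(F\otimes_{\bF_q}\!\cO_K)$ is free of rank one over $F\otimes_{\bF_q}\! A$ with generator $\tau(\chi)=e_\chi(1\otimes\eta)$. Since $\chi$ is odd we have $e^-e_\chi=e_\chi$, so Proposition \ref{Lambdagen} shows that $e_\chi(F\otimes_{\bF_q}\!\Lambda)$ is free of rank one over $F\otimes_{\bF_q}\! A$ with generator $e_\chi(1\otimes\bar\pi_v)$. Hence it suffices to prove
\[
	L(1,\chi)\,\tau(\chi) \;=\; (-1)^d\, B_{1,\chi^{-1}}\, e_\chi(1\otimes\bar\pi_v)
\]
in $F\otimes_{\bF_q}\! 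K_\infty$; the unit $(-1)^d\in F^\times$ is irrelevant for the statement about submodules.

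Both sides of this identity lie in the isotypical component $e_\chi(F\otimes_{\bF_q}\! K_\infty)$. Since $\Delta$ acts transitively on the places of $K$ above $\infty$, and any $x$ with $\sigma x=\chi(\sigma)x$ satisfies $\sigma(x_v)=\chi(\sigma)\,x_{\sigma v}$ place by place, the projection $e_\chi(F\otimes_{\bF_q}\! K_\infty)\to F\otimes_{\bF_q}\! K_v$ onto the $v$-factor is injective; so it is enough to compare $v$-components. For the left side, Proposition \ref{B1chiformula} (applicable since $\chi$ is odd and $\chi\neq 1$) gives $L(1,\chi)=(1\otimes\bar\pi/P)\,B_{1,\chi^{-1}}\,\tau(\chi^{-1})$ in $F\otimes_{\bF_q}\! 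K_v$; multiplying by $\tau(\chi)$ and using $\tau(\chi)\tau(\chi^{-1})=(-1)^d\otimes P$ from Proposition \ref{tauProp}(2) yields $L(1,\chi)\tau(\chi)=(-1)^d(1\otimes\bar\pi)\,B_{1,\chi^{-1}}$ in $F\otimes_{\bF_q}\! K_v$. For the right side, I would expand $e_\chi(1\otimes\bar\pi_v)=-\sum_{\sigma\in\Delta}\chi^{-1}(\sigma)\,\sigma(1\otimes\bar\pi_v)$, group the sum over the cosets of the decomposition group $\bF_q^\times$ at $v$, use that $\bF_q^\times$ scales $\bar\pi$ via the tautological character (see \ref{pilambda}), and use the identity $\sum_{c\in\bF_q^\times}\chi^{-1}(c)\,c=-1$, valid because $\chi$ is odd; this collapses the $v$-component of $e_\chi(1\otimes\bar\pi_v)$ to $1\otimes\bar\pi$, so the $v$-component of the right side is $(-1)^d B_{1,\chi^{-1}}(1\otimes\bar\pi)$, matching the left side.

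I expect the only real friction to be bookkeeping rather than anything conceptual: reconciling the several incarnations of $\bar\pi$ (from \ref{piexp}, \ref{pilambda}, \ref{lambdatov} and \ref{piv}), keeping track of the sign $(-1)^d$ and of the constant in the coset sum, and giving a careful proof that projection to the $v$-factor is injective on the $\chi$-isotypical piece. Once these points are settled, the proposition follows by mechanically assembling Theorem \ref{thmnormalbasis} with Propositions \ref{tauProp}, \ref{B1chiformula} and \ref{Lambdagen}.
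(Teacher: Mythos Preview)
Your proposal is correct and follows essentially the same route as the paper's proof: both arguments pick the generators $\tau(\chi)$ and $e_\chi(1\otimes\bar\pi_v)$ via Theorem \ref{thmnormalbasis} and Proposition \ref{Lambdagen}, reduce the comparison to the $v$-component (the paper phrases this as $\Delta$-invariance of the quotient $\alpha$, you as injectivity of the projection on the $\chi$-isotypical piece, which is the same observation), and then finish with Proposition \ref{B1chiformula} together with $\tau(\chi)\tau(\chi^{-1})=(-1)^d\otimes P$ from Proposition \ref{tauProp}. The bookkeeping concerns you flag are exactly the ones the paper handles, and your coset computation $\sum_{c\in\bF_q^\times}\chi^{-1}(c)c=-1$ is the same step that the paper leaves implicit when it passes from $e_\chi(1\otimes\bar\pi_v)$ to $1\otimes\bar\pi$ at $v$.
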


\begin{proof}
Both sides are free $F\otimes_{\bF_q}\! A$-modules of rank one. By Theorem \ref{thmnormalbasis} the left-hand-side is generated by
\[
	 L(1,\chi)\tau(\chi) \in F\otimes_{\bF_q}\!K_\infty
\]
and by Proposition \ref{Lambdagen} the right-hand-side is generated by
\[
	B_{1,\chi^{-1}}e_\chi (1\otimes\bar\pi_v) \in F\otimes_{\bF_q}\!K_\infty.
\]

Let $\alpha$ be the quotient of these generators:
\begin{equation}\label{defalpha}
	\alpha := \frac{B_{1,\chi^{-1}}e_\chi(1\otimes\bar\pi_v)}{L(1,\chi)\tau(\chi)} \in (F\otimes_{\bF_q}\!K_\infty)^\times.
\end{equation}
We need to show $\alpha  = x\otimes 1$ for some $x\in F^\times$. Since $\Delta$ acts via $\chi$ on both the numerator and the denominator of (\ref{defalpha}), we have that $\alpha$ is invariant under $\Delta$. It therefore suffices to show that the $v$-component
$\alpha_v \in F \otimes_{\bF_q} \! K_v$ is of the form $x\otimes 1$ for some $x \in F^\times$.

Recall that $\bF_q^\times \subset \Delta$ is the decomposition group at $\infty$, and that it acts on $\bar\pi \in K_v$ through the tautological character. In particular, for $\delta \in \Delta$ the $v$-component of
$\delta(\bar\pi_v)$ equals $0$ if $\delta \not\in \bF_q^\times$ and $\delta \cdot \bar\pi$ if $\delta \in \bF_q^\times$ (where the dot denotes field multiplication in $K_v$). Since $\chi$ is odd this gives
\[
	\alpha_v = 
	\frac{ B_{1,\chi^{-1}} (1\otimes\bar\pi) }
	{
	L(1,\chi)\tau(\chi)  
	} \in F\otimes_{\bF_q}\!K_v,
\]
and by Proposition \ref{B1chiformula}
\[
	\alpha_v = \frac{ 1\otimes P }{ \tau(\chi^{-1})\tau(\chi) }.
\]
Using Proposition \ref{tauProp} we conclude $\alpha_v=(-1)^d$, which finishes the proof.
\end{proof}

\begin{lemma}\label{lemmaminustorsion}
$\cU^-=\cU_{\tors} = \C(\cO_K)_\tors$.
\end{lemma}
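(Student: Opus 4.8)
The plan is to prove the two equalities $\cU^-=\cU_{\tors}$ and $\cU_{\tors}=\C(\cO_K)_{\tors}$ separately. The first is formal: by \ref{ADeltaLambda} we have $\cU=\cU^+\oplus\cU^-$ with $\cU^+$ free of rank one over $A[\Delta]^+$ (hence torsion-free as an $A$-module) and with $\cU^-$ a torsion $A$-module, so the $A$-torsion submodule of $\cU$ is exactly $\cU^-$. For the second equality, the inclusion $\cU_{\tors}\subseteq\C(\cO_K)_{\tors}$ is immediate from $\cU\subseteq\C(\cO_K)$, so the content of the lemma is the reverse inclusion, and for this it clearly suffices to show $\C(\cO_K)_{\tors}\subseteq\cU=\exp_\C\U(\cO_K)$ --- a torsion point of $\C(\cO_K)$ that lies in $\cU$ is in particular a torsion point of $\cU$.

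To prove $\C(\cO_K)_{\tors}\subseteq\cU$, recall from \ref{carlitztorsion} that $\C(\cO_K)_{\tors}=\C(K)[Q]$ for the particular $Q\in A$ described there, and that consequently $\C[Q]$ is a constant group scheme over $K$, so that $\left|\C(K)[Q]\right|=q^{\deg Q}$. Fix $t\in\C(K)[Q]$ and regard it as an element of $\C(K_\infty)=\prod_{v\mid\infty}\C(K_v)$ via the diagonal embedding $K\hookrightarrow K_\infty$. Since $\exp_\C\colon K_\infty\to\C(K_\infty)$ acts componentwise, it is enough to produce, for each place $v\mid\infty$, an element $\gamma_v\in K_v$ with $\exp_\C\gamma_v=t_v$: then $\gamma:=(\gamma_v)_v\in K_\infty$ satisfies $\exp_\C\gamma=t\in\C(\cO_K)$, whence $\gamma\in\U(\cO_K)$ and $t=\exp_\C\gamma\in\cU$. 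So fix $v\mid\infty$, and let $\bar\pi\in K_v$ be a generator of $\ker(\exp_\C\colon K_v\to\C(K_v))$ as in \ref{piexp} and \ref{lambdatov}, so that $K_v=k_\infty(\bar\pi)$ and this kernel equals $A\bar\pi$. Then $\exp_\C$ maps $\frac{1}{Q}A\bar\pi\subseteq K_v$ onto the set of the $q^{\deg Q}$ pairwise distinct elements $\exp_\C\!\left(\frac{a}{Q}\bar\pi\right)$, $a$ running over $A/QA$, each of which is annihilated by $\phi_Q$ and hence lies in $\C(K_v)[Q]$.

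The key step is the identification $\exp_\C\!\left(\frac{1}{Q}A\bar\pi\right)=\C(K_v)[Q]$: since $\C[Q]$ is constant over $K$ it is constant over $K_v$ too, so $\C(K_v)[Q]$ again has exactly $q^{\deg Q}$ elements, and the inclusion just found is forced to be an equality. In particular $t_v\in\exp_\C\!\left(\frac{1}{Q}A\bar\pi\right)\subseteq\exp_\C(K_v)$, which yields the desired $\gamma_v$ and completes the proof. The only point that requires genuine care, rather than being purely formal, is precisely this identification --- i.e.\ the fact that no $Q$-torsion of the Carlitz module is lost on descending from $k_\infty^a$ to the finite subextension $K_v$ --- and it rests on the choice of $Q$ in \ref{carlitztorsion}, for which $K$ is the splitting field of $\C[Q]$. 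Everything else is bookkeeping.
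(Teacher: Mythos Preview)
Your proof is correct and follows the same two-step structure as the paper: first $\cU^-=\cU_{\tors}$ via \ref{ADeltaLambda}, then $\C(\cO_K)_{\tors}\subseteq\cU$. The difference lies in the second step. The paper simply invokes \cite[Prop.~2]{TAE1} for the inclusion $\C(\cO_K)_{\tors}\subset\cU$, whereas you supply a self-contained argument using only material already established in the paper: the identification $K_v=k_\infty(\bar\pi)$ from \ref{pilambda}, the kernel $A\bar\pi$ of $\exp_\C$ on $K_v$, and the explicit description of $\C(K)_{\tors}$ in \ref{carlitztorsion}. Your counting argument that $\exp_\C(\tfrac{1}{Q}A\bar\pi)=\C(K_v)[Q]$ is clean and depends precisely on the fact that, for the specific $Q$ of \ref{carlitztorsion}, the field $K$ already splits $\C[Q]$ --- which you correctly single out as the only non-formal step. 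What your argument buys is that the lemma becomes independent of the external reference; what the paper's citation buys is brevity and a more general statement (the result in \cite{TAE1} holds for arbitrary Drinfeld modules over arbitrary $\cO_K$, not just the cyclotomic setting).
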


\begin{proof}
By \ref{ADeltaLambda} we obtain a short exact sequence of $A[\Delta]^-$-modules
\[
	0 \to \Lambda \to \U(\cO_K)^- \to \cU^- \to 0
\]
and since $\U(\cO_K)$ is free of rank one over $A[\Delta]$, we find that $\Lambda$ and $\U(\cO_K)^-$ have the same $A$-rank. We conclude that $\cU^-$ is torsion. Since $\Lambda^+=0$, the module $\cU^+$ is torsion-free, so $\cU^-=\cU_\tors$. In \cite[Prop. 2]{TAE1} it is shown that $\C(\cO_K)_\tors \subset \cU$, so we conclude $\cU_\tors=\C(\cO_K)_\tors$.
\end{proof}

We can now prove Theorem \ref{thmodd}:

\begin{theorem} \label{HB}
Let $F$ be a field containing $\bF_q$ and let $\chi\colon \Delta \to F$ be an odd character. Consider the ideal $I := \Fitt e_\chi (F \otimes_{\bF_q} \H(\cO_K))$ in $F\otimes_{\bF_q} A$. Then
\begin{enumerate}
\item $I=(1)$ if $\chi=1$ (and then $q=2$);
\item $I=( (1 \otimes T-\chi(T)\otimes 1) B_{1,\chi^{-1}})$ if $\chi$ extends to a ring homomorphism $A/PA \to F$;
\item $I=(B_{1,\chi^{-1}})$ otherwise.
\end{enumerate}
\end{theorem}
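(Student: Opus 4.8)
The overall strategy is to deduce everything from the Anderson-module refinement of the class number formula. By Corollary~\ref{compareFitting} the $A[\Delta]$-modules $\H(\cO_K)$ and $\U(\cO_K)/\fM$ have the same Fitting ideal and $\fM\subseteq\U(\cO_K)$. Since Fitting ideals and the idempotents $e_\chi$ are compatible with extension of scalars in $F$, we may assume $\bF_{q^d}\subseteq F$; then, writing $M_\chi:=e_\chi(F\otimes_{\bF_q}M)$ and applying the exact functor $M\mapsto M_\chi$, the problem becomes the computation of $I=\Fitt_{F\otimes A}\bigl(\U(\cO_K)_\chi/\fM_\chi\bigr)$, where $\fM_\chi\subseteq\U(\cO_K)_\chi$ are free rank-one $F\otimes A$-lattices in $e_\chi(F\otimes K_\infty)$. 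By Theorem~\ref{CNF2} we have $\fM_\chi=L(1,\chi)\cdot(\cO_K)_\chi$ as lattices in $e_\chi(F\otimes K_\infty)$.

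For $\chi\neq1$ I would feed in three inputs. First, applying $(-)_\chi$ to the exact sequence $0\to\Lambda\to\U(\cO_K)\to\cU\to0$ of \ref{ADeltaLambda} gives $0\to\Lambda_\chi\to\U(\cO_K)_\chi\to\cU_\chi\to0$. Second, Proposition~\ref{Bbar} together with the identity above yields $\fM_\chi=B_{1,\chi^{-1}}\cdot\Lambda_\chi$ inside $e_\chi(F\otimes K_\infty)$. Third, as $\chi$ is odd, Lemma~\ref{lemmaminustorsion} identifies $\cU_\chi$ with $e_\chi(F\otimes\C(\cO_K)_\tors)$, and by \ref{carlitztorsion} this is $e_\chi(F\otimes A/QA)$ with $\Delta$ acting by multiplication through the canonical isomorphism $\Delta\cong(A/QA)^\times$ (here $Q=P$ if $q>2$ and $Q=\lcm(P,T(T+1))$ if $q=2$); decomposing $A/QA$ by the Chinese remainder theorem and using that $(A/(T(T+1))A)^\times$ is trivial, one sees that for $\chi\neq1$ only the factor $A/PA$ contributes, $\Delta$ acting on it through $\omega$, so $\cU_\chi\cong(F\otimes A)/(1\otimes T-\chi(T)\otimes1)$ if $\chi$ extends to a ring homomorphism $A/PA\to F$ and $\cU_\chi=0$ otherwise. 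Now if $g$ generates the free module $\U(\cO_K)_\chi$ and $c$ generates $\Fitt\cU_\chi$, the exact sequence gives $\Lambda_\chi=(F\otimes A)\,cg$, hence $\fM_\chi=(F\otimes A)\,(B_{1,\chi^{-1}}c)\,g$; since $\fM_\chi\subseteq\U(\cO_K)_\chi$ the element $B_{1,\chi^{-1}}c$ is automatically a polynomial and $I=(B_{1,\chi^{-1}}c)$, which is $(B_{1,\chi^{-1}})$ in case~(3) and $\bigl((1\otimes T-\chi(T)\otimes1)\,B_{1,\chi^{-1}}\bigr)$ in case~(2). (Finiteness of $\H(\cO_K)$ forces $\fM_\chi\neq0$, hence $B_{1,\chi^{-1}}\neq0$, so these are genuine nonzero ideals.)

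The case $\chi=1$ has to be handled separately, and this is where I expect the main difficulty: it forces $q=2$, Proposition~\ref{Bbar} is unavailable, and the naive case-(3) formula $(B_{1,1})$ gives the wrong answer. Instead I would compute $\Lambda_1$ and $\fM_1$ by hand. By Proposition~\ref{Lambdagen}, $\Lambda$ is free of rank one over $A[\Delta]^-=A[\Delta]$ on $\bar\pi_v$; since the decomposition group at $\infty$ is trivial, $\Delta$ permutes the places above $\infty$ simply transitively, each completion being $k_\infty$ (as $q-1=1$), and all generators of $\ker\exp_\C$ differ by $\bF_q^\times=\{1\}$ — so $e_1(1\otimes\bar\pi_v)=\bar\pi\cdot\tau(1)$, where $\tau(1)=1$ generates $(\cO_K)_1$ by Theorem~\ref{thmnormalbasis}. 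Together with $L(1,1)=1\otimes\bar\pi/(T^2+T)$ from Proposition~\ref{B1chiformula}, this gives $\fM_1=\frac{1}{T^2+T}\Lambda_1$. On the other hand the computation of $\cU_1=e_1(F\otimes\C(\cO_K)_\tors)$ as in the previous paragraph gives $\cU_1\cong(F\otimes A)/(1\otimes(T^2+T))$, so the exact sequence forces $\Lambda_1=(T^2+T)\,\U(\cO_K)_1$, i.e.\ $\U(\cO_K)_1=\frac{1}{T^2+T}\Lambda_1=\fM_1$; hence $\U(\cO_K)_1/\fM_1=0$ and $I=(1)$. The delicate points, as usual elementary but easy to mishandle, are the bookkeeping around $\C(\cO_K)_\tors$ when $q=2$ (where $Q$ is a proper multiple of $P$, or $P$ has degree $1$ and $\Delta$ is trivial) and the precise constant-tracking in the identity $e_1(1\otimes\bar\pi_v)=\bar\pi\,\tau(1)$.
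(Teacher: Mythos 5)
Your proposal is correct and follows essentially the same route as the paper's proof: both rest on the exact sequence $0\to\Lambda\to\U(\cO_K)^-\to\C(K)_\tors\to0$, Proposition~\ref{Bbar}, the determination of $\C(K)_\tors$ with its $\Delta$-action from \ref{carlitztorsion}, and Proposition~\ref{B1chiformula} for $\chi=1$. The only cosmetic difference is that you route through Anderson's module $\fM$ via Corollary~\ref{compareFitting} and Theorem~\ref{CNF2}, whereas the paper applies $e_\chi(F\otimes-)$ directly to Theorem~\ref{thmCNF} to obtain $L(1,\chi)\tau(\chi)(F\otimes A)=I\cdot e_\chi(F\otimes\U(\cO_K))$; since Corollary~\ref{compareFitting} is itself derived from Theorems~\ref{thmCNF} and~\ref{CNF2}, these two paths are equivalent.
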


\begin{proof}
Let $S$ denote the set of  $\chi\colon \Delta \to F^\times$ that extend to a ring homomorphism $A/PA \to F$. 

The equivariant class number formula (Theorem \ref{thmCNF}) says that
\begin{equation}\label{CNFodd}
	L(1,\chi) \tau(\chi) ( F\otimes_{\bF_q}\! A )= I \cdot e_\chi( F \otimes_{\bF_q}\! \U(\cO_K) )
\end{equation}
in $F\otimes_{\bF_q} K_\infty$.  The preceding lemma gives us a short exact sequence
\[
	0 \to \Lambda \to \U(\cO_K)^- \to \C(K)_\tors \to 0.
\]
If $q>2$ then by \ref{carlitztorsion} we have $\C(K)_\tors \cong A/PA$ on which $\Delta$ acts through the tautological character. From this we get
\[
	e_\chi( F \otimes_{\bF_q}\! \U(\cO_K) ))
	= \begin{cases}
		(1\otimes T-\chi(T)\otimes 1)^{-1} e_\chi( F\otimes_{\bF_q}\! \Lambda )  & \text{if }\chi \in S, \\
		e_\chi( F\otimes_{\bF_q}\! \Lambda )  & \text{otherwise,} 
	\end{cases}
\]
and the Theorem follows from (\ref{CNFodd}) and Proposition \ref{Bbar}.

If $q=2$ and $\chi \neq 1$ then $P$ must have degree at least $2$. By \ref{carlitztorsion} we have $\C(K)_\tors \cong A/PA \times A/(T^2+T)A$ with $\Delta$ acting via the tautological character on the first component, and trivially on the second. Hence also in this case we have
\[
	e_\chi( F \otimes_{\bF_q}\! \U(\cO_K) ))
	= \begin{cases}
		(1\otimes T-\chi(T)\otimes 1)^{-1} e_\chi( F\otimes_{\bF_q}\! \Lambda )  & \text{if }\chi \in S, \\
		e_\chi( F\otimes_{\bF_q}\! \Lambda )  & \text{otherwise,} 
	\end{cases}
\]
and the Theorem follows from (\ref{CNFodd}) and Proposition \ref{Bbar}.

Finally, we consider the case where $q=2$ and $\chi=1$. We have that the module of $\Delta$-invariant elements of $\C(K)_\tors$ is isomorphic with $A/(T^2+T)A$, and hence
\[
	e_\chi( F \otimes_{\bF_q}\! \U(\cO_K) ) = (1\otimes(T^2+T)^{-1}) e_\chi( F\otimes_{\bF_q}\! \Lambda ).
\]
Since $e_\chi( F\otimes_{\bF_q}\! \Lambda ) = (1\otimes \bar\pi) (F\otimes_{\bF_q} A)$ we find
using (\ref{CNFodd}) the identity 
\[
	L(1,\chi)(F\otimes_{\bF_q}\! A) = \left( 1 \otimes (T^{2}+T)^{-1} \bar\pi \right) \cdot I
\]
in $F\otimes_{\bF_q}\! k_\infty$. The theorem now follows from Proposition \ref{B1chiformula}.
\end{proof}

\bigskip

\begin{void}
Next we will prove congruences modulo $P$ between the generalized Bernoulli-Carlitz numbers $B_{1,\chi}$ and the Bernoulli-Carlitz numbers $\BC_n$ (to be defined shortly). We then use these  congruences to give a new proof of the analogue of the Herbrand-Ribet theorem of \cite{TAE3}, based on Theorem \ref{HB}.
\end{void}

\begin{void}
For all $i\geq 0$ let
\[
	D_i := \prod_{j=0}^{i-1} (T^{q^i}-T^{q^j}).
\]
Equivalently (see \cite[\S 3]{GOS}) they can be defined by the identity
\[
	\exp_\C X = \sum_{i \geq 0} \frac{ X^{q^i} }{ D_i }
\]
in $k[[X]]$. 
\end{void}

\begin{void}
Let $n$ be a non-negative integer with $q$-adic expansion
\[
	n = n_0 + n_1 q + n_2 q^2 + \cdots, \quad 0 \leq n_i < q.
\]
The $n$-th \emph{Carlitz factorial} $\Pi(n)$ is defined to be
\[
	\Pi(n) := \prod_{i\geq 0} D_i^{n_i} \in A.
\]
Note that $\Pi(n)$ is not divisible by $P$ for all $n < q^d$.

\end{void}

\begin{void}\label{defBCn}
For all $n \geq 0$ the \emph{Bernoulli-Carlitz numbers} $\BC_n \in k$ are defined by the power series identity
\begin{equation}\label{eqBCn}
	\frac{X}{\exp_\C X} = \sum_{n\geq 0} \BC_n \frac{X^n}{\Pi(n)} \in k[[X]].
\end{equation}
These were introduced by Carlitz \cite{CAR}. Since the left-hand-side of (\ref{defBCn}) is invariant under $X \mapsto \mu X$ for $\mu \in \bF_q^\times$ we see that $\BC_n=0$ if $n$ is not divisible by $q-1$.

Note that $\BC_n = \BC_n' \Pi(n)$, see \ref{defBCprime}.
\end{void}

Fix a generator $\lambda \in \cO_K$ of the $P$-torsion of the Carlitz module.

\begin{void}
\emph{Convention.} The completions $k_P$ and $\cO_{K,P}$ are naturally $A/PA$-algebras. For a $\chi\colon \Delta \to (A/PA)^\times$ we will, by abuse of notation, denote by $B_{1,\chi}$ and $\tau(\chi)$ the images of $B_{1,\chi}$ and $\tau(\chi)$ under the natural maps
\[
	(A/PA)\otimes_{\bF_q} k \to k_P
\]
and
\[
	(A/PA)\otimes_{\bF_q} \cO_K \to \cO_{K,P}
\]
respectively. 
\end{void}

Recall that $\omega$ denotes the tautological character $\Delta\to (A/PA)^\times$.

\begin{proposition}\label{taucong}
Let $n$ be an integer with $0\leq n < q^d-1$. Then in $\cO_{K,P}$ we have the congruence
\[
	\tau(\omega^{n})\equiv \frac{\lambda^n}{\Pi(n)} \pmod {\fm^{n+1}}.
\]
\end{proposition}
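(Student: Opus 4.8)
The plan is to work $P$-adically and compute $\tau(\omega^n)$ modulo successive powers of $\fm$ by exploiting the explicit product formula defining the Gauss-Thakur sum together with the known congruence relating $\lambda$ to the uniformizer $\bar\pi/P$. First I would recall from \ref{Alambda} that $\lambda$ generates the maximal ideal $\fm$ of $\cO_{K,P}$, so that $\fm^{n+1}=\lambda^{n+1}\cO_{K,P}$ and congruences modulo $\fm^{n+1}$ can be read off from the $\lambda$-adic valuation. The key input is a congruence $\tau(\omega)\equiv \lambda \pmod{\fm^2}$ for the basic Gauss-Thakur sum; more precisely, I expect to prove that $\tau(\omega^{q^i})\equiv \lambda^{q^i}/D_i \pmod{\fm^{q^i+1}}$ for each $i$ with $0\le i<d$, since $\omega^{q^i}=\omega_{i+1}$ is one of the conjugate characters and $\tau(\omega^{q^i})$ is, up to the Frobenius twist, the $q^i$-power Frobenius applied to $\tau(\omega)$, while $D_i$ appears as the relevant coefficient of the Carlitz exponential (using $\exp_\C X=\sum_i X^{q^i}/D_i$ from the $D_i$-discussion).

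The main computation is the base case $i=0$: showing $\tau(\omega)\equiv \lambda \pmod{\fm^2}$. For this I would write $\tau(\omega)=-\sum_{\delta\in\Delta}\omega(\delta)^{-1}\otimes\delta(\lambda)$ and use that $\delta(\lambda)=\phi_{a_\delta}(\lambda)$ where $a_\delta\in A$ represents $\delta\in(A/PA)^\times$, together with the expansion $\phi_a(X)=aX+(\text{higher }q\text{-power terms})$. Reducing $P$-adically, $\delta(\lambda)\equiv a_\delta\lambda \pmod{\fm^2}$ (since $\lambda^q\in\fm^q\subset\fm^2$), so $\tau(\omega)\equiv -\bigl(\sum_\delta \omega(\delta)^{-1}a_\delta\bigr)\lambda$; and the sum $-\sum_{\delta}\omega(\delta)^{-1}a_\delta$ reduces mod $P$ to $-\sum_{\bar a\in(A/PA)^\times}\bar a^{-1}\bar a=-\sum_{\bar a\in(A/PA)^\times}1=-(q^d-1)\equiv 1$ in $A/PA$, which identifies the leading coefficient as $1$. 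This is the step I expect to be the main obstacle, because care is needed to see that the ambiguity in choosing representatives $a_\delta$ (and the precise identification of the $A/PA$-algebra structure on $\cO_{K,P}$ via the canonical section of $A_P\to A/PA$) does not perturb the computation modulo $\fm^2$; the point is that the correction terms all land in $\fm^2$.

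Having established the congruences for the $\tau(\omega^{q^i})$, I would then use the multiplicativity $\tau(\omega^n)=\prod_{i=0}^{d-1}\tau(\omega^{q^i})^{n_i}$, where $n=\sum_i n_i q^i$ is the $q$-adic expansion of $n$ (matching the description of $\chi=\omega_1^n$ in terms of its digits $s_i=n_i$ in \S\ref{secnormalbasis}). Multiplying the congruences $\tau(\omega^{q^i})\equiv \lambda^{q^i}/D_i\pmod{\fm^{q^i+1}}$ and tracking the $\lambda$-adic valuations, the product of the leading terms is $\lambda^{\sum_i n_i q^i}/\prod_i D_i^{n_i}=\lambda^n/\Pi(n)$ by the definition of the Carlitz factorial, while the error terms accumulate into $\fm^{n+1}$ — here one uses that $\Pi(n)$ is a $P$-adic unit for $n<q^d$ (noted in the $\Pi(n)$-discussion), so dividing by $\Pi(n)$ does not lower valuations, and an elementary estimate shows that replacing any one factor $\tau(\omega^{q^i})$ by something congruent to it modulo $\fm^{q^i+1}$ changes the product by an element of $\fm^{n+1}$. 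Assembling these yields $\tau(\omega^n)\equiv \lambda^n/\Pi(n)\pmod{\fm^{n+1}}$, as desired.
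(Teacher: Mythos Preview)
Your overall strategy coincides with the paper's: reduce to the ``digit'' case $\tau(\omega^{q^i})\equiv \lambda^{q^i}/D_i\pmod{\fm^{q^i+1}}$ via the multiplicative definition of $\tau(\omega^n)$ and of $\Pi(n)$, and then multiply. The paper simply cites Thakur \cite[Theorem VI]{THA} for the digit case, whereas you sketch a direct argument. Your base case $i=0$ and your multiplication step are both correct.

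The one point that deserves more care is your passage from $i=0$ to general $i$. The claim that $\tau(\omega^{q^i})$ is ``the $q^i$-power Frobenius applied to $\tau(\omega)$'' is not literally true: applying the $q$-th power to $\tau(\omega^{q^{i-1}})=-\sum_\delta[\omega(\delta)]^{-q^{i-1}}\delta(\lambda)$ replaces $\delta(\lambda)$ by $\delta(\lambda)^q=(\sigma_T\delta)(\lambda)-T\,\delta(\lambda)$, and after reindexing one finds
\[
\tau(\omega^{q^{i-1}})^q=\bigl([\bar T]^{q^i}-T\bigr)\,\tau(\omega^{q^i}),
\]
with $[\bar T]$ the Teichm\"uller lift of $T$ in $A_P$. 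Since $[\bar T]^{q^i}-T\equiv T^{q^i}-T\pmod{P}$ and $D_i=(T^{q^i}-T)D_{i-1}^q$, the induction goes through (using that $D_i$ is a $P$-adic unit for $i<d$). This recursion is exactly the content of Thakur's computation; your phrase ``up to the Frobenius twist'' hides the extra factor $[\bar T]^{q^i}-T$, which is where $D_i$ actually enters. Once you make this explicit, the argument is complete and essentially reproduces Thakur's proof.
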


\begin{proof}
Writing $n$ in its $q$-adic expansion, we see from the definitions of $\tau(\omega^n)$ and $\Pi(n)$ that it suffices to prove
\[
	\tau(\omega^{q^i}) \equiv \frac{\lambda^{q^i}}{D_i} \pmod {\fm^{q^i+1}}
\]
for all $i$ satisfying $0\leq i < d$. This is shown in \cite[Theorem VI]{THA}. Note that Thakur's notation is different from ours. His $\lambda$ is congruent to our $\lambda$ modulo $\fm^2$, but not necessarily the same (see \cite[Lemma II]{THA}). Also note that there is a typo in the proof of Theorem VI of \emph{loc.~cit.}: the left-hand side of the displayed formula should be $g_j/\lambda^{q^j}$ rather than $g_j/\lambda^{q^h}\!$.
\end{proof}

\begin{theorem}\label{cong}If $n \neq 1$ and $0<n\leq q^d-1$ then $B_{1,\omega^{-n}} \in A_P$ and
\[
	B_{1,\omega^{-n}} \equiv \frac{\Pi(q^d-1-n)}{\Pi(q^d-n)} \BC_{q^d-n}
\]	
modulo $P$.
\end{theorem}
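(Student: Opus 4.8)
The plan is to translate the defining relation $e_{\omega^{-n}}(1\otimes\lambda^{-1}) = B_{1,\omega^{-n}}\tau(\omega^{-n})$ into the complete local ring $\cO_{K,P}$ and extract the congruence by comparing leading terms in the $\fm$-adic filtration. The key observation is that $\Delta = (A/PA)^\times$ acts on $\C(\fm/\fm^2)$ through $\omega$, so the idempotents $e_\chi$ can be computed $\fm$-adically, and that $\tau(\omega^m)$ has $\fm$-adic valuation exactly $m$ for $0\le m<q^d-1$ by Proposition \ref{taucong}. Since $\lambda$ generates $\fm$, write $\lambda^{-1}$ in the local field $K_P$; it has valuation $-1$, hence $1\otimes\lambda^{-1}$ lies in $\omega^{-1}$-part only modulo higher terms. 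To access the $\omega^{-n}$-isotypical component I would instead start from the Gauss--Thakur relation $\tau(\chi)\tau(\chi^{-1}) = (-1)^d\otimes P$ (Proposition \ref{tauProp}(2)), which lets me rewrite $B_{1,\omega^{-n}}$ as a quotient involving $\tau(\omega^{n})$ and $\tau(\omega^{1-n})$, or more directly realize $e_{\omega^{-n}}(1\otimes\lambda^{-1})$ via the normal basis element $\eta$ of Theorem \ref{thmnormalbasis}.

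Concretely, here is the sequence of steps. First, integrality: show $B_{1,\omega^{-n}} \in A_P$. Since $\lambda^{-1}\cdot\lambda^{q^d-1}$ is a unit times a power series in $\lambda$ (because $\C(\cO_K)[P]$ is killed by a polynomial whose lowest term is $P\lambda$, giving $\lambda^{q^d-1} \in P\cO_{K,P}^\times\cdot(1+\fm)$, so that $\lambda^{-1} = \lambda^{q^d-2}/(\text{unit}\cdot P)\cdot(\ldots)$ up to adjustment), one sees $\lambda^{-1} \equiv (\text{unit})\,\lambda^{q^d-2}/P \pmod{\text{higher}}$, hence $e_{\omega^{-n}}(1\otimes\lambda^{-1})$ is $P^{-1}$ times an integral element whose image in $\omega^{-n}$-part vanishes unless the relevant digit condition holds; when $n\ne 1$ the factor of $P^{-1}$ is cancelled, giving integrality. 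Second, leading-term comparison: by Proposition \ref{taucong}, $\tau(\omega^{q^d-1-n}) \equiv \lambda^{q^d-1-n}/\Pi(q^d-1-n) \pmod{\fm^{q^d-n}}$, and similarly one needs the expansion of $\lambda^{-1}$ in terms of the $\tau(\omega^m)$. Third, relate $\lambda^{-1}$ to $\BC_{q^d-n}$: using $X/\exp_\C X = \sum \BC_n X^n/\Pi(n)$ and $\exp_\C(\bar\pi/P) = \lambda$, expand $\bar\pi/(P\lambda) = \sum \BC_n (\bar\pi/P)^n/\Pi(n)$, so that modulo $P$ (where $\bar\pi/P$ behaves like a uniformizer) the coefficient extracting the $\omega^{-n}$-part of $\lambda^{-1}$ picks out $\BC_{q^d-n}/\Pi(q^d-n)$ against $\tau(\omega^{q^d-1-n})$, whose normalization contributes $\Pi(q^d-1-n)$. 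Assembling these gives $B_{1,\omega^{-n}} \equiv \frac{\Pi(q^d-1-n)}{\Pi(q^d-n)}\BC_{q^d-n} \pmod P$.

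The main technical obstacle is step three: carefully tracking the identification of the $\omega^{-n}$-isotypical piece of $1\otimes\lambda^{-1}$ in $\cO_{K,P}$ with a specific coefficient of the Bernoulli--Carlitz generating series. The subtlety is that $\omega$ has order $q^d-1$, so $\omega^{-n} = \omega^{q^d-1-n}$, and one must match the $q$-adic digit pattern of $q^d-1-n$ (which controls $\tau(\omega^{q^d-1-n})$ via Thakur's congruence) against the digit pattern appearing in $\Pi(q^d-n)$ in the Bernoulli--Carlitz expansion; the shift between $q^d-1-n$ and $q^d-n$ is exactly what produces the ratio $\Pi(q^d-1-n)/\Pi(q^d-n)$, and getting this bookkeeping right (including the unit factors from Proposition \ref{tauProp} and from $\lambda^{q^d-1} \equiv -P$ type relations, cf. \ref{B11}) is where the real work lies. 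Everything else is formal manipulation with the $\fm$-adic filtration on $\cO_{K,P}$ and the structure results of Sections \ref{secelt} and \ref{secnormalbasis}.
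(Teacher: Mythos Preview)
Your overall strategy---work in $\cO_{K,P}$, exploit the $\fm$-adic filtration together with Proposition~\ref{taucong}, and feed the generating series $X/\exp_\C X = \sum_n \BC_n X^n/\Pi(n)$ into the defining relation $e_{\omega^{-n}}(1\otimes\lambda^{-1}) = B_{1,\omega^{-n}}\tau(\omega^{-n})$---is exactly the paper's. But your step three has a genuine gap. You propose substituting $X=\bar\pi/P$ into the generating series, using $\exp_\C(\bar\pi/P)=\lambda$, and then reading off a congruence ``modulo $P$ (where $\bar\pi/P$ behaves like a uniformizer)''. The element $\bar\pi$ is an $\infty$-adic period: it lies in $K_v$ for a place $v\mid\infty$ and is transcendental over $k$ (see \ref{piexp}). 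It is not an element of $K_P$ or $\cO_{K,P}$, so the expansion you write cannot be interpreted as an $\fm$-adic congruence, and there is no sense in which $\bar\pi/P$ is a uniformizer at $P$.

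What is missing is a $P$-adic surrogate for $\bar\pi/P$: an element $\beta\in\fm$ satisfying $\exp_\C\beta\equiv\lambda$ in a suitable truncated sense. The paper supplies this via the \emph{truncated exponential}
\[
\overline\exp_\C\colon \fm/\fm^{q^d}\isomto \C(\fm)/\C(\fm^{q^d}),\qquad x\mapsto x+e_1x^q+\cdots+e_{d-1}x^{q^{d-1}},
\]
which makes sense because $e_i=1/D_i$ is $P$-integral for $i<d$, and which is a $\Delta$-equivariant isomorphism since it induces the identity on each $\fm^i/\fm^{i+1}$. Taking $\bar\beta=\overline\exp_\C^{\,-1}(\lambda)$ and any lift $\beta\in\fm$, one may now legitimately substitute $X=\beta$ into $X/\exp_\C X$ modulo $\fm^{q^d-1}$, obtaining $\lambda^{-1}\equiv\sum_{m}(\BC_m/\Pi(m))\beta^{m-1}$. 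The $\Delta$-equivariance of $\overline\exp_\C$ also yields the clean eigenspace statement $e_{\omega^m}\bar\beta^m=\bar\beta^m$ and $e_{\omega^n}\bar\beta^m=0$ for $n\neq m$ (with $0\le n,m<q^d-1$), which is the precise form of your remark that $\Delta$ acts on $\fm/\fm^2$ through $\omega$; applying $e_{\omega^{q^d-1-n}}$ then isolates the single term $(\BC_{q^d-n}/\Pi(q^d-n))\beta^{q^d-1-n}$. Proposition~\ref{taucong} and $\beta\equiv\lambda\pmod{\fm^2}$ finish the comparison. Your separate integrality argument in step one is then unnecessary: once the congruence modulo $\fm$ is in hand, the observation that both sides lie in $k_P$ upgrades it to a congruence modulo $P$ and in particular forces $B_{1,\omega^{-n}}\in A_P$.
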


\begin{proof} (Compare with \S 8 of \cite{TAE3}.)
Put $e_i := 1/D_i$, so that the Carlitz exponential is given by the power
series
\[
	\exp_\C X = X + e_1 X^q + \cdots \in k[[X]].
\]
Recall that $\fm$ denotes the maximal ideal of $\cO_{K,P}$ and that $\cO_{K,P}/\fm = A/PA$.
Since the coefficients $e_1, \cdots ,e_{d-1}$ are $P$-integral, the following
``truncated exponential map'' is well-defined: 
\[
	\overline\exp_\C \colon \fm/\fm^{q^d} \to \fm/\fm^{q^d}\!,\,\,
	x \mapsto x + e_1 x^q + \cdots + e_{d-1} x^{q^{d-1}}.
\]
It is $\Delta$-equivariant, and it is an isomorphism since it induces the identity map on the intermediate quotients $\fm^i/\fm^{i+1}$. Expanding and truncating the functional equation (\ref{expfuneq})
we find that $\overline\exp_\C$ defines an isomorphism of $A[\Delta]$-modules
\[
	\overline\exp_\C\colon \fm/\fm^{q^d} \isomto \C(\fm)/\C(\fm^{q^d}).
\]

Let $\bar\beta\in \fm/\fm^{q^d}$ be the unique element such that
\[
	\overline\exp_{\C} \bar\beta  = \lambda + \fm^{q^d}.
\]
For a character $\chi\colon \Delta \to (A/PA)^\times$ we denote by $e_\chi \in A_P[\Delta]$ the corresponding idempotent (as in \ref{Padicidempotents}). Since $\overline{\exp}_\C$ is $\Delta$-equivariant we have for $n$, $m$ in $\{0,\ldots, q^d-2\}$ the identity
\begin{equation}\label{betadecomp}
	e_{\omega^{n}} \bar\beta^m = 
	\begin{cases}
		\bar\beta^m & \text{if $n=m$} \\
		0 & \text{if $n\neq m$}.
	\end{cases}
\end{equation}

Let $\beta \in \fm$ be a lift of $\bar\beta$. Then expanding and truncating (\ref{eqBCn})
we obtain the congruence 
\begin{equation}\label{eqcongruence}
	\frac{1}{\lambda} \equiv
	\sum_{n=0}^{q^d-1} \frac{\BC_n}{\Pi(n)} \beta^{n-1} \pmod{ \fm^{q^d-1} }.
\end{equation}

Moreover, by (\ref{betadecomp}) we have for $n,m\in \{0,\ldots, q^d-2\}$
\[
	e_{\omega^n} \beta^m \equiv 0 \pmod{\fm^{q^d}} \quad \text{ if $n \neq m$ }
\]
and
\[
	e_{\omega^n} \beta^m \equiv \beta^m \pmod{\fm^{q^d}} \quad \text{ if $n = m$}.
\]

Applying $e_{\omega^{-n}} = e_{\omega^{q^d-1-n}}$ to (\ref{eqcongruence}) we obtain
\[
	e_{\omega^{-n}} \lambda^{-1} \equiv \frac{\BC_{q^d-n}}{\Pi(q^d-n)} \beta^{q^d-1-n} \pmod {\fm^{q^d-1}}
\]
and therefore 
\[
	\tau(\omega^{q^{d}-1-n}) B_{1,\omega^{-n}}  \equiv  \frac{\BC_{q^d-n}}{\Pi(q^d-1-n)} \beta^{q^d-1-n} \pmod {\fm^{q^d-1}}.
\]
By Proposition \ref{taucong} and the fact that $\beta/\lambda \equiv 1$ mod $\fm$ this implies
\[
	B_{1,\omega^{-n}} 
	 \equiv
	\frac{\Pi(q^d-1-n)}{\Pi(q^d-n)} \BC_{q^d-n} \pmod{ \fm }.
\]
Since both sides lie in $A_P$, we conclude that the claimed congruence mod $P$
indeed holds.
\end{proof}

If we combine Theorem \ref{HB} with the congruence of Theorem \ref{cong} we obtain a new proof of the Herbrand-Ribet theorem of \cite{TAE3}:

\begin{theorem}\label{thmHR}
Let $1 \leq n < q^d-1$ be divisible by $q-1$. Then
\[
	e_{\omega^{1-n}} (A/PA\otimes_{\bF_q} \H(\cO_K))
\]
is non-zero if and only if $v_P(\BC_n) > 0$. 
\end{theorem}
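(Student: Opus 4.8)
The plan is to extract $\Fitt_{F[T]}e_\chi\bigl(F\otimes_{\bF_q}\H(\cO_K)\bigr)$ from Theorem \ref{HB} and to compare it with the congruence of Theorem \ref{cong}; throughout put $\chi:=\omega^{1-n}$, $F:=A/PA$, $F[T]:=F\otimes_{\bF_q}A$ and $M:=e_\chi\bigl(F\otimes_{\bF_q}\H(\cO_K)\bigr)$. First I would note that $\chi$ is odd: since $(q-1)\mid n$, the restriction of $\omega^{1-n}$ to $\bF_q^\times\subset\Delta$ is $x\mapsto x^{1-n}=x$. The module $M$ has finite length over $F[T]$, so $M\neq0$ if and only if $\Fitt_{F[T]}M\neq(1)$. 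Theorem \ref{HB} gives $\Fitt_{F[T]}M$ in three cases. If $\chi=1$ then $n=1$ and $q=2$; here $\Fitt_{F[T]}M=(1)$, hence $M=0$, while $v_P(\BC_1)=v_P\bigl((P+1)/(T^2+T)\bigr)=0$ by \ref{B11} (as $\deg P\ge2$), so the assertion holds. If $\chi$ extends to a ring homomorphism $A/PA\to F$ then $\chi=\omega^{q^{j}}$ with $1\le j<d$ (the value $j=0$ would force $n=0$), and $\Fitt_{F[T]}M=\bigl((1\otimes T-\chi(T)\otimes1)\,B_{1,\chi^{-1}}\bigr)$; here $1\otimes T-\chi(T)\otimes1=1\otimes T-\omega(T)^{q^{j}}\otimes1$ is one of the $d$ linear prime factors of $1\otimes P$ in $F[T]$, and it differs from the factor $\mathfrak p:=\bigl(1\otimes T-\omega(T)\otimes1\bigr)$ because $\bar T\notin\bF_{q^{j}}$. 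In the remaining case $\Fitt_{F[T]}M=(B_{1,\chi^{-1}})$.

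Next I would apply Theorem \ref{cong} with parameter $q^d-n$: since $q^d-n\neq1$, $0<q^d-n\le q^d-1$ and $\omega^{-(q^d-n)}=\omega^{n-1}=\chi^{-1}$, it gives $B_{1,\chi^{-1}}\in A_P$ together with
\[
  B_{1,\chi^{-1}}\equiv\frac{\Pi(n-1)}{\Pi(n)}\,\BC_n\pmod P .
\]
As $\Pi(n-1)$ and $\Pi(n)$ are prime to $P$, this says exactly that $v_P(\BC_n)>0$ if and only if $B_{1,\chi^{-1}}$ reduces to $0$ in $A_P/PA_P$. I would then observe, using the $A/PA$-algebra structure on $A_P$ from the Teichmüller section of \ref{Padicidempotents}, that the ``convention'' map $F\otimes_{\bF_q}k\to k_P$ carries $F[T]$ into $A_P$, that the composite $F[T]\to A_P\to A_P/PA_P=A/PA$ is precisely reduction modulo $\mathfrak p$ (it sends $1\otimes T\mapsto\bar T$), and that $A_P$ is the $\mathfrak p$-adic completion of $F[T]$; hence $v_P$ restricts on $F(T)=F\otimes_{\bF_q}k$ to the $\mathfrak p$-adic valuation. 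Combining this with the first paragraph — where the correction factor of case (2), when present, is prime to $\mathfrak p$ — yields, for every $\chi\neq1$,
\[
  v_P(\BC_n)=v_{\mathfrak p}\bigl(\Fitt_{F[T]}M\bigr).
\]

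It remains to show $\Fitt_{F[T]}M\neq(1)$ if and only if $v_{\mathfrak p}\bigl(\Fitt_{F[T]}M\bigr)>0$, which together with the display completes the proof. The implication ``$\Leftarrow$'' is trivial. For ``$\Rightarrow$'' I would first check that the divisor of $\Fitt_{F[T]}M$ is supported on the $d$ primes of $F[T]$ above $P$: by Theorem \ref{HB} it suffices to show $B_{1,\chi^{-1}}\in F(T)$ has this property, and this follows from $e_\chi(1\otimes\lambda^{-1})=B_{1,\chi}\tau(\chi)$ (see \ref{B1chi}), from $\tau(\chi)\tau(\chi^{-1})=(-1)^d\otimes P$ (Proposition \ref{tauProp}), and from $\lambda$ generating the unique prime of $\cO_K$ over $P$ (\ref{Alambda}): these force $(1\otimes P)\,B_{1,\chi^{-1}}$ to be regular away from $P$. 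Granting this, $\Fitt_{A[\Delta]}\H(\cO_K)$ is a product over Galois orbits of characters, and the $q$-Frobenius equivariance of its $\bF_{q^d}$-components should force the $\chi$-component to be, up to a unit of $F[T]$ and the case-(2) correction factor, a power of $P$; so $\Fitt_{F[T]}M$ is either $(1)$ or divisible by every prime over $P$, in particular by $\mathfrak p$.

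The main obstacle is exactly this last point: upgrading ``$\Fitt_{F[T]}M$ has divisor supported over $P$'' to ``$\Fitt_{F[T]}M$ is, up to the ramification factor and a unit, a power of $P$'' — equivalently, that $B_{1,\chi^{-1}}$ equals a unit of $F[T]$ times a power of $P$ (divided by $1\otimes T-\chi(T)\otimes1$ in case (2)). The Frobenius-equivariance argument sketched above is the natural route, but one must take care that it delivers the \emph{same} multiplicity at each of the $d$ conjugate primes over $P$ rather than mere stability of the whole product; alternatively one can argue via Corollary \ref{compareFitting} that $\H(\cO_K)^-$, having the same Fitting ideal as $\U(\cO_K)^-/\fM^-$, is $P$-power torsion with a Fitting ideal of this special shape, which is where the genuine input lies.
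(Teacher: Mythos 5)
Your first two paragraphs correctly reproduce the computational core of the paper's argument: for $\chi=\omega^{1-n}$ with $1\le n<q^d-1$ the case-(2) correction factor $1\otimes T-\chi(T)\otimes1$ is indeed prime to $\fp$ (only $\chi=\omega$, i.e.\ $j=0$, would make it lie in $\fp$, and that is excluded), and Theorem \ref{cong} applied with parameter $q^d-n$ transfers the question to $\BC_n$; this is exactly the content of the paper's formula (\ref{pfHR1}) combined with the congruence. (Two small slips: for $q=2$ one has $\BC_1=1/(T^2+T)$, not $(P+1)/(T^2+T)$, which is $B_{1,1}$ — your conclusion $v_P(\BC_1)=0$ is unaffected; and the congruence mod $P$ only gives ``$v_P(B_{1,\chi^{-1}})>0$ iff $v_P(\BC_n)>0$'', not the equality of valuations you display, though only the biconditional is needed.) The genuine gap is the step you yourself flag as the main obstacle, and it is not closed by what you offer. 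Reading the statement literally, with $A/PA\otimes_{\bF_q}\H(\cO_K)$, you must show that $\Fitt_{F[T]}M\neq(1)$ forces $v_{\fp}(\Fitt_{F[T]}M)>0$. Your supporting claim that the divisor of $B_{1,\chi^{-1}}$ is supported above $P$ is not established by \ref{B1chi}, Proposition \ref{tauProp} and \ref{Alambda}: those facts only bound the \emph{poles} (they give $B_{1,\chi^{-1}}\in F[T][1/P]$), whereas $\Fitt_{F[T]}M$ is an integral ideal in any case, so the whole issue is about \emph{zeros} away from $\fp$ — both at primes of $A$ other than $P$ (nothing prevents the $\chi$-eigenspace of $\H(\cO_K)$ from having torsion away from $P$) and at the other conjugate primes above $P$. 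The Frobenius-equivariance of the components of $\Fitt_{A[\Delta]}\H(\cO_K)$ relates the valuation of $g_\chi$ at a conjugate prime $\fp'$ to the valuation of $g_{\chi'}$ at $\fp$ for a \emph{different} character $\chi'$, so it cannot locate a zero of $g_\chi$ at $\fp$ itself; and Corollary \ref{compareFitting} gives no support statement of the required kind. So the proposal, as written, does not prove the ``only if'' direction.

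For comparison, the paper does not prove the statement in this literal reading either: its proof passes to $A_P\otimes_A\H(\cO_K)$, i.e.\ it establishes the $P$-part statement, which is the form appearing in Theorem \ref{mainthmHR} with $(A/PA)\otimes_A\H(\cO_K)=\H(\cO_K)/P\H(\cO_K)$; the asserted equivalence of Theorem \ref{thmHR} with Theorem \ref{mainthmHR} makes clear that this is the intended meaning. With that reading, your own computation already finishes the proof: by Nakayama over $A_P$, $e_\chi\bigl((A/PA)\otimes_A\H(\cO_K)\bigr)\neq0$ iff $e_\chi\bigl(A_P\otimes_A\H(\cO_K)\bigr)\neq0$ iff $v_{\fp}\bigl(\Fitt_{F[T]}M\bigr)>0$, which by Theorems \ref{HB} and \ref{cong} holds iff $v_P(\BC_n)>0$. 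In short: you identified a real difficulty, but instead of resolving it (which your sketches do not do) the correct move is to recognize that the theorem is a statement about the $P$-part of $\H(\cO_K)$, where the difficulty does not arise.
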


For $n<q^d-1$ we have $v_P(\BC_n) = v_P(\BC'_n)$ with $\BC'_n$ as in \ref{defBCprime} so that this theorem is equivalent with Theorem \ref{mainthmHR}.

\begin{proof}[Proof of Theorem \ref{thmHR}]
Passing from $A$ to $A_P$ in Theorem \ref{HB} and splitting out character by character we find
\begin{equation}\label{pfHR1}
	\length_{A_P} e_{\chi} (A_P\otimes_A \H(\cO_K)) =
	\begin{cases}
	0 & \text{if $\chi = 1$ (and $q=2$),} \\
	v_P(B_{1,\chi^{-1}}) + 1 & \text{if $\chi = \omega$,} \\
	v_P(B_{1,\chi^{-1}}) & \text{otherwise.}
	\end{cases}
\end{equation}
for all odd $\chi\colon \Delta \to A_P^\times$.

If $n=1$ and $q=2$ then $\BC_1 = (T^2+T)^{-1}$. Since we must have $d\geq 2$ we find $v_P(\BC_1)=0$ and the Theorem holds.

If $n>1$ then we see that $e_{\omega^{1-n}} (A/PA \otimes_{\bF_q} \H(\cO_K))$ is non-zero if and only if
$v_P(B_{1,\omega^{n-1}})>0$, and by Theorem \ref{cong} this is the case if and only if
$v_P(\BC_n)>0$.
\end{proof}

\section{Even part of $\H(\cO_K)$}\label{seceven}

%
%
%
%

\begin{void}Let $\cL\subset \C(\cO_K)$ be the image of $\fM$ in $\C(\cO_K)$ and $\sqrt{\cL}$ its division hull in $\C(\cO_K)$, that is,
\[
	\sqrt{\cL} := \left\{ m \in \C(\cO_K) \colon \exists a \in A \setminus \{0\} \text{ such that } am \in \cL \right\}.
\]
\end{void}

\begin{proposition}\label{propdivclosure}$\sqrt{\cL}=\cU$.
\end{proposition}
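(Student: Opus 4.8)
The plan is to establish the two inclusions $\cU\subseteq\sqrt{\cL}$ and $\sqrt{\cL}\subseteq\cU$ separately. For the first, recall from Corollary~\ref{compareFitting} that $\fM\subseteq\U(\cO_K)$ and that the $A[\Delta]$-module $\U(\cO_K)/\fM$ has the same Fitting ideal as the finite module $\H(\cO_K)$; since $A[\Delta]$ is a product of principal ideal domains, this forces $\U(\cO_K)/\fM$ to be itself finite, hence annihilated by some nonzero $a_0\in A$, i.e.\ $a_0\U(\cO_K)\subseteq\fM$. Applying $\exp_\C$, which intertwines multiplication by $a_0$ on $K_\infty$ with the Carlitz action of $a_0$ on $\C(K_\infty)$, and using $\cL=\exp_\C\fM$, $\cU=\exp_\C\U(\cO_K)$, we get $a_0\cU\subseteq\cL$. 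As moreover $\cU\subseteq\C(\cO_K)$, this says precisely that every element of $\cU$ lies in $\sqrt{\cL}$.

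For the reverse inclusion I would reduce to a torsion-freeness statement: since $\cL\subseteq\cU$, it suffices to show that $\cU$ is saturated in $\C(\cO_K)$, i.e.\ that $\C(\cO_K)/\cU$ has no nonzero $A$-torsion; then the division hull of $\cU$ inside $\C(\cO_K)$ is $\cU$ itself, and a fortiori $\sqrt{\cL}\subseteq\cU$. Unwinding the definition $\U(\cO_K)=\{\gamma\in K_\infty:\exp_\C\gamma\in\C(\cO_K)\}$ gives $\cU=\exp_\C(K_\infty)\cap\C(\cO_K)$, so $\C(\cO_K)/\cU$ embeds into $\C(K_\infty)/\exp_\C(K_\infty)$; and since $\exp_\C$ respects the decomposition $K_\infty=\prod_{v\mid\infty}K_v$, it is enough to prove that $\C(K_v)/\exp_\C(K_v)$ is torsion-free for each place $v\mid\infty$.

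This last, local, statement is the crux, and it is the only place where the special geometry of the cyclotomic extension is genuinely used: by \ref{piexp} and \ref{pilambda} the generator $\bar\pi$ of $\ker\bigl(\exp_\C\colon k_\infty^a\to\C(k_\infty^a)\bigr)=A\bar\pi$ in fact lies in each completion $K_v$ (equivalently, $K/k$ being totally ramified of degree $q-1$ at $\infty$, the local kernels $\ker(\exp_\C|_{K_v})$ have $A$-rank one). Granting this, suppose $x\in\C(K_v)$ satisfies $ax=\exp_\C\gamma$ with $\gamma\in K_v$ and $a\in A\setminus\{0\}$. Using the surjectivity of $\exp_\C$ on $k_\infty^a$ (\ref{piexp}) I may write $x=\exp_\C\gamma'$ with $\gamma'\in k_\infty^a$; then $\exp_\C(a\gamma'-\gamma)=0$, so $a\gamma'-\gamma=c\bar\pi$ for some $c\in A$, whence $\gamma'=a^{-1}(\gamma+c\bar\pi)\in K_v$ because both $\gamma$ and $\bar\pi$ lie in $K_v$. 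Therefore $x=\exp_\C\gamma'\in\exp_\C(K_v)$, which is exactly the torsion-freeness required. I expect the one point needing input beyond formal bookkeeping to be the assertion $\bar\pi\in K_v$ for all $v\mid\infty$ — without it $\C(K_v)/\exp_\C(K_v)$ need not be torsion-free — but this is read off directly from the description of $K/k$ at $\infty$ recalled in Section~\ref{secelt}.
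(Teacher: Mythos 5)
Your proof is correct and constitutes a self-contained argument, whereas the paper's proof is a one-line citation to the remark following Proposition~2 of \cite{TAE1}, which establishes the saturation of $\cU$ in $\C(\cO_K)$ in the generality of Drinfeld modules over arbitrary finite extensions $K/k$. For the inclusion $\cU\subseteq\sqrt{\cL}$ you invoke Corollary~\ref{compareFitting}, which is available at this point of the paper; note that once $\fM\subseteq\U(\cO_K)$ is known, the finiteness of $\U(\cO_K)/\fM$ is immediate without any Fitting-ideal comparison, since both $\fM$ and $\U(\cO_K)$ are free rank-one $A[\Delta]$-lattices in $K_\infty$. The reverse inclusion $\sqrt{\cL}\subseteq\cU$ is the real content, and your reduction to the torsion-freeness of each $\C(K_v)/\exp_\C(K_v)$, proved by lifting through the surjective exponential on $k_\infty^a$ and descending using $\bar\pi\in K_v$, is clean; it is worth noting that the step $\bar\pi\in K_v$ is genuinely specific to the cyclotomic situation --- for a general finite extension $K/k$ one cannot assume $\bar\pi$ lies in the local field at infinity, the descent then carries a Galois-cohomological obstruction, and the general-purpose proof in \cite{TAE1} proceeds by a different route using the finite generation of $\U(\cO_K)$. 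Your argument thus makes transparent, in the cyclotomic case, the mechanism that the paper outsources to an external reference.
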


\begin{proof}See the remark after \cite[Prop. 2]{TAE1}.
\end{proof}

\begin{theorem} \label{thmdivclosure}
$\Fitt_{A[\Delta]}  \cU/\cL = \Fitt_{A[\Delta]} \H(\cO_K)^{+}$. 
\end{theorem}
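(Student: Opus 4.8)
The plan is to compare the three Fitting ideals $\Fitt_{A[\Delta]}\cU/\cL$, $\Fitt_{A[\Delta]}\U(\cO_K)/\fM$, and $\Fitt_{A[\Delta]}\H(\cO_K)^+$, using Corollary \ref{compareFitting} as the bridge. That corollary already tells us $\fM\subset\U(\cO_K)$ and that $\H(\cO_K)$ and $\U(\cO_K)/\fM$ have the same Fitting ideal; since Fitting ideals respect direct sum decompositions and the decomposition into odd and even parts is compatible with everything in sight, it suffices to show
\[
	\Fitt_{A[\Delta]^+}\bigl(\cU/\cL\bigr)^+ = \Fitt_{A[\Delta]^+}\bigl(\U(\cO_K)/\fM\bigr)^+
\]
together with the fact that the odd part of $\cU/\cL$ contributes nothing. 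So the first step is to relate the exact sequence $0\to\Lambda\to\U(\cO_K)\overset{\exp_\C}{\to}\cU\to 0$ of \ref{ADeltaLambda} with $\fM$ sitting inside $\U(\cO_K)$ and $\cL=\exp_\C\fM$ sitting inside $\cU$.

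First I would observe that applying $\exp_\C$ gives a commutative diagram with exact rows
\[
\begin{CD}
	0 @>>> \Lambda @>>> \U(\cO_K) @>{\exp_\C}>> \cU @>>> 0 \\
	@. @| @AAA @AAA @. \\
	0 @>>> \Lambda @>>> \fM @>{\exp_\C}>> \cL @>>> 0
\end{CD}
\]
where the left vertical map is the identity --- here one must check that $\Lambda\subset\fM$, i.e. that the kernel of $\exp_\C$ on $K_\infty$ is contained in $\fM$; this follows because $\Lambda^+=0$ and $\Lambda$ is the free rank-one $A[\Delta]^-$-module on $\bar\pi_v$ (Proposition \ref{Lambdagen}), while by Theorem \ref{CNF2} and Proposition \ref{B1chiformula} the odd isotypical components of $\fM$ contain the corresponding components of $\Lambda$ (this is essentially the content of Proposition \ref{Bbar}). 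Actually the cleanest route is: the snake lemma applied to this diagram yields a short exact sequence $0\to\U(\cO_K)/\fM\to\cU/\cL\to 0$ when the left map is an isomorphism, giving an isomorphism $\U(\cO_K)/\fM\cong\cU/\cL$ directly, from which the equality of Fitting ideals is immediate. So the real work is to verify $\Lambda\subseteq\fM$.

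To verify $\Lambda\subseteq\fM$: by Theorem \ref{CNF2} we have $\fM=L(1,\Delta)\cdot\cO_K$, so $\fM$ is free of rank one over $A[\Delta]$ and it is enough to check the containment isotypically over a large enough $F$. On even components $\Lambda$ vanishes so there is nothing to prove; on odd components $\chi\neq 1$, Proposition \ref{Bbar} gives $L(1,\chi)e_\chi(F\otimes\cO_K)=B_{1,\chi^{-1}}e_\chi(F\otimes\Lambda)$, and since $B_{1,\chi^{-1}}\in F\otimes A$ (it is $P$-integral --- indeed for $\chi=\omega^{-n}$ with $n\neq 1$ this is Theorem \ref{cong}, and one checks it directly, $B_{1,\chi^{-1}}$ being a polynomial times $\tau(\chi^{-1})^{-1}\tau(\chi^{-1})=1$ up to the relation $\tau(\chi)\tau(\chi^{-1})=(-1)^d P$), we get $e_\chi(F\otimes\Lambda)\subseteq L(1,\chi)e_\chi(F\otimes\cO_K)=e_\chi(F\otimes\fM)$, as desired. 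The remaining case $q=2,\chi=1$ is handled by the explicit computation in the proof of Theorem \ref{HB}, where $e_\chi(F\otimes\Lambda)=(1\otimes\bar\pi)(F\otimes A)$ and $L(1,1)=1\otimes\bar\pi/(T^2+T)$, so again $\Lambda\subseteq\fM$ on this component.

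The main obstacle is precisely the containment $\Lambda\subseteq\fM$ (equivalently, the $P$-integrality of the odd $B_{1,\chi^{-1}}$): once that is in hand, the diagram chase is formal and the equality of Fitting ideals follows from Corollary \ref{compareFitting} together with the compatibility of Fitting ideals with the $\pm$-decomposition. I would also remark that the isomorphism $\cU/\cL\cong\U(\cO_K)/\fM$ produced this way is stronger than the bare equality of Fitting ideals, but since Corollary \ref{compareFitting} only gives equality of Fitting ideals between $\U(\cO_K)/\fM$ and $\H(\cO_K)$ (not an isomorphism --- and indeed, as the paper notes, these are not expected to be isomorphic in general), the final statement is stated only at the level of Fitting ideals, and one should be careful to pass to $\H(\cO_K)^+$ rather than all of $\H(\cO_K)$ since $\cU/\cL$ and hence $\U(\cO_K)/\fM$ are supported on the even part (their odd parts vanish, because $\cU^-=\cU_\tors=\C(\cO_K)_\tors\subseteq\cU$ already lies in $\cL$ by Lemma \ref{lemmaminustorsion} combined with $\C(\cO_K)_\tors\subset\exp_\C\fM$, which needs a brief check).
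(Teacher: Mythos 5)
There is a genuine gap: the containment $\Lambda\subseteq\fM$ that your diagram hinges on is false in general, and the verification you offer for it gets the inclusion backwards. Proposition~\ref{Bbar} says $e_\chi(F\otimes_{\bF_q}\fM)=L(1,\chi)e_\chi(F\otimes_{\bF_q}\cO_K)=B_{1,\chi^{-1}}\,e_\chi(F\otimes_{\bF_q}\Lambda)$. If $B_{1,\chi^{-1}}$ lies in $F\otimes_{\bF_q}A$, multiplying the free rank-one module $e_\chi(F\otimes\Lambda)$ by the \emph{integral} scalar $B_{1,\chi^{-1}}$ produces a \emph{sub}module, so what you get is $e_\chi(F\otimes\fM)\subseteq e_\chi(F\otimes\Lambda)$, not $e_\chi(F\otimes\Lambda)\subseteq e_\chi(F\otimes\fM)$. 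The inclusion you want would need $B_{1,\chi^{-1}}$ to be a \emph{unit} in $F\otimes A$, and it is not: by Theorem~\ref{HB} the Fitting ideal of $e_\chi(F\otimes\H(\cO_K))$ is generated (up to the factor $1\otimes T-\chi(T)\otimes1$ when $\chi\in S$) by $B_{1,\chi^{-1}}$, and the whole point of the Herbrand--Ribet theorem (\ref{mainthmHR}, \ref{thmHR}) is that these generators can have positive $P$-adic valuation, so $\H(\cO_K)^-$ is nontrivial for suitable $P$.

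This is not just a local slip: your intended conclusion $\U(\cO_K)/\fM\cong\cU/\cL$, combined with Corollary~\ref{compareFitting}, would give $\Fitt_{A[\Delta]}(\cU/\cL)=\Fitt_{A[\Delta]}\H(\cO_K)$, hence $\Fitt_{A[\Delta]^-}\H(\cO_K)^-=(1)$ always, contradicting the results of \S\ref{secodd}. (You gesture at ``passing to $\H(\cO_K)^+$'' at the end, but that step only works precisely when $\H(\cO_K)^-=0$, which you cannot assume.) The correct argument does not try to fit $\Lambda$ inside $\fM$ at all. Instead: (i) on the minus side, $\cU^-=\cU_\tors=\C(K)_\tors$ (Lemma~\ref{lemmaminustorsion}) and also $\cL_\tors=\cU_\tors$, so $\cU^-/\cL^-=0$ and $\cU/\cL=\cU^+/\cL^+$; (ii) on the plus side, $\Lambda^+=0$, so $\exp_\C$ is injective on $\U(\cO_K)^+$ and carries $\fM^+$ isomorphically onto $\cL^+$, giving $\cU^+/\cL^+\cong\U(\cO_K)^+/\fM^+$; (iii) $A[\Delta]$-Fitting ideals factor as products over $A[\Delta]^+\times A[\Delta]^-$, so the plus component of Corollary~\ref{compareFitting} gives $\Fitt_{A[\Delta]^+}\U(\cO_K)^+/\fM^+=\Fitt_{A[\Delta]^+}\H(\cO_K)^+$, which finishes the proof. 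Your proposal has the right ingredients (the diagram with $\exp_\C$, the $\pm$-decomposition, Corollary~\ref{compareFitting}), but the attempt to promote the comparison to an inclusion $\Lambda\subseteq\fM$ is exactly where it breaks.
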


\begin{proof}
By \ref{ADeltaLambda} and Lemma \ref{lemmaminustorsion}
the minus-part of the short exact sequence of $A[\Delta]$-modules
\[
	0 \to \ker \exp_C \to \U(\cO_K) \to \cU \to 0
\]
is the short exact sequence
\[
	0 \to \ker \exp_C \to \U(\cO_K)^- \to \C(K)_\tors \to 0.
\]
By Proposition \ref{propdivclosure} we have that $\cL_\tors = \cU_\tors$ so that the minus-part of the subsequence
\[
	0 \to  \fM \cap \ker \exp_C  \to  \fM \to \cL \to 0
\]
is the short exact sequence
\[
	0 \to  \fM \cap \ker \exp_C  \to  \fM^{-} \to \C(K)_\tors \to 0.
\]

Comparing both, we find
\[
	\Fitt_{A[\Delta]} \frac{\cU}{\cL} = \Fitt_{A[\Delta]} \frac{\cU^+}{\cL^+} =
	\Fitt_{A[\Delta]} \frac{\U(\cO_K)^+}{\fM^+}.
\]
The ideal on the left equals $\Fitt_{A[\Delta]} \sqrt{\cL}/{\cL}$ by Proposition \ref{propdivclosure}, and the ideal on the right equals $\Fitt_{A[\Delta]} \H(\cO_K)^+$ by Corollary \ref{compareFitting}.
\end{proof}

In \cite{ANG&TAE} we have shown that $A_P \otimes_A \H(\cO_K)^+$ is not always trivial, unlike what one may expect by analogy with the Kummer-Vandiver conjecture. Combining this with Theorem \ref{thmdivclosure} we conclude

\begin{corollary}
There exist prime powers $q$ and monic irreducible $P \in \bF_q[T]$ so that $\sqrt{\cL}/{\cL}$ has nontrivial $P$-torsion.\qed
\end{corollary}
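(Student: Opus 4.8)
The plan is to deduce the corollary formally from Theorem \ref{thmdivclosure}, Proposition \ref{propdivclosure}, and the existence result of \cite{ANG&TAE}, so I would first isolate the purely module-theoretic fact that makes the argument work. For a finite $A[\Delta]$-module $M$, the following are equivalent: $M$ has nontrivial $P$-torsion; $A_P\otimes_A M\neq 0$; and, writing $A[\Delta]=\prod_i F_i[T]$ and $\Fitt_{A[\Delta]}M=\prod_i(g_i)$ with $g_i\in F_i[T]$, the image of $P$ in $F_i[T]$ has a nontrivial common factor with $g_i$ for some $i$. The equivalence of the first two is immediate from the structure theorem $M\cong\bigoplus_j A/f_jA$, since both conditions say that $P\mid f_j$ for some $j$; the third is the translation of this across the product decomposition of $A[\Delta]$, using that over each $F_i[T]$ the Fitting ideal of $\bigoplus_j F_i[T]/(h_{ij})$ is $(\prod_j h_{ij})$ and that an irreducible factor of $P$ divides $\prod_j h_{ij}$ iff it divides some $h_{ij}$. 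The point to retain is that ``$M$ has nontrivial $P$-torsion'' depends only on $\Fitt_{A[\Delta]}M$.

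Next I would assemble the chain of identifications. By Proposition \ref{propdivclosure} we have $\sqrt{\cL}=\cU$, so $\sqrt{\cL}/\cL=\cU/\cL$, which is a finite $A[\Delta]$-module. By Theorem \ref{thmdivclosure} its Fitting ideal equals $\Fitt_{A[\Delta]}\H(\cO_K)^+$. Combining this with the observation of the previous paragraph, $\sqrt{\cL}/\cL$ has nontrivial $P$-torsion if and only if $\H(\cO_K)^+$ does, equivalently if and only if $A_P\otimes_A\H(\cO_K)^+\neq 0$.

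Finally I would invoke \cite{ANG&TAE}, where prime powers $q$ and monic irreducible $P\in\bF_q[T]$ are produced for which $A_P\otimes_A\H(\cO_K)^+\neq 0$ (for instance the explicit pair $q=3$, $P=T^9-T^6-T^4-T^3-T^2+1$ recorded earlier). For any such $(q,P)$ the module $\sqrt{\cL}/\cL$ then has nontrivial $P$-torsion, which is exactly the statement of the corollary.

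As for the main obstacle: within the present paper there is essentially none, the corollary being a formal consequence of results already established; the only mild care needed is the bookkeeping that $P$-torsion of a finite module is detected by its Fitting ideal over the non-local ring $A[\Delta]$. All the genuine difficulty — exhibiting a cyclotomic function field whose even class module has $P$-torsion — lies in the computations of \cite{ANG&TAE} and is not reproduced here.
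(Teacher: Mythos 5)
Your proposal is correct and follows the same route the paper takes: combine Proposition \ref{propdivclosure} (to identify $\sqrt{\cL}/\cL$ with $\cU/\cL$), Theorem \ref{thmdivclosure} (equality of Fitting ideals with $\H(\cO_K)^+$), and the examples from \cite{ANG\&TAE} where $A_P\otimes_A\H(\cO_K)^+\neq 0$. Your explicit remark that $P$-torsion of a finite $A[\Delta]$-module is detected by its Fitting ideal is a small but correct piece of bookkeeping that the paper leaves implicit.
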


This settles Anderson's conjecture \cite[\S 4.12]{AND} in the negative. For example,
the prime 
\[
	P=T^9-T^6-T^4-T^3-T^2+1\,\, \in\,\, \bF_3[T]
\]
gives a counterexample \cite{ANG&TAE}.

\bigskip

\begin{void}
We now turn our attention to $P$-adic special values. We will combine Theorem \ref{thmdivclosure} with Anderson's $P$-adic construction of special points to prove Theorem \ref{thmeven}. Recall that $\fm$ denotes the maximal ideal of $\cO_{K,P}$.
Note that for every $N\geq 0$ the subgroup $\fm^N$ of $C(\cO_{K,P})$ is preserved by the Carlitz action. We denote the resulting $A$-module by $C(\fm^N)$.
\end{void}

\begin{proposition}\label{propPexp}
Assume $N\geq 2$. Then
\begin{enumerate}
\item  $\exp_\C X \in k[[X]]$ converges on $\fm^N$ and defines an isomorphism 
$\exp_{\C,P} \colon \fm^N \to \C(\fm^N)$ of topological $A[\Delta]$-modules;
\item the action of $A[\Delta]$ on $\C(\fm^N)$ extends uniquely to a continuous $A_P[\Delta]$-module structure on $\C(\fm^N)$;
\item the $A_P$-module $\C(\fm^N)$ is torsion-free.
\end{enumerate}
\end{proposition}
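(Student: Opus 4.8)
The plan is to prove the three assertions in sequence, using the first to bootstrap the second and third. For (1), I would first check convergence: writing $\exp_\C X = \sum_i X^{q^i}/D_i$, the point is that $v_P(D_i) = 0$ for $i < d$ while for $i \geq d$ one has $v_P(D_i) \geq 1$, and more precisely $v_P(D_i)$ grows roughly like $q^{i-d}$ (since $D_i = \prod_{j<i}(T^{q^i} - T^{q^j})$ and $P \mid T^{q^d} - T$). The ramification index of $P$ in $\cO_K$ is $q^d - 1$, so for $x \in \fm^N$ with $N \geq 2$ we have $v_\fm(x^{q^i}/D_i) = q^i v_\fm(x) - (q^d-1)v_P(D_i) \to \infty$, giving convergence; moreover the lowest-order term $x$ dominates, so $\exp_{\C,P}$ maps $\fm^N$ to $\fm^N$ and, because it is ``infinitely tangent to the identity,'' it induces the identity on each graded piece $\fm^j/\fm^{j+1}$ and is therefore a bijection $\fm^N \to \C(\fm^N)$. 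Continuity and $A[\Delta]$-linearity are formal (the functional equation (\ref{expfuneq}) gives $A$-linearity, $\Delta$ acts through field automorphisms fixing $P$). The careful bookkeeping of $v_P(D_i)$ versus the ramification index is the one genuinely quantitative point, and it is where I expect the main obstacle to lie — one must make sure the estimate is uniform enough that the $A$-action actually extends, rather than just that the series converges pointwise.

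For (2), I would use (1) to transport the problem to $\fm^N$ itself: it suffices to show that the $A$-module $\fm^N$, with its $T$-action by multiplication, extends to a continuous $A_P$-module, i.e. that multiplication by $P$ is topologically nilpotent on $\fm^N$ in a way compatible with the $A_P$-adic topology. But $P$ generates the maximal ideal of $A_P$, and multiplication by $P$ on $\fm^N \subset \cO_{K,P}$ has the property that $P^n \fm^N = \fm^{N + n(q^d-1)}$, so the $P$-adic filtration on $\fm^N$ is cofinal with the $\fm$-adic one; hence $\fm^N$ is already $P$-adically complete and separated, and the $A$-module structure extends uniquely to an $A_P = \varprojlim A/P^n A$-module structure by continuity. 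Transporting back along $\exp_{\C,P}$ gives the $A_P[\Delta]$-structure on $\C(\fm^N)$, and uniqueness follows from density of $A$ in $A_P$ together with continuity. (The hypothesis $N \geq 2$ rather than $N \geq 1$ is what guarantees the exponential is defined — as the text notes, $\C(\fm)$ still has an $A_P$-structure but the exponential argument is cleaner for $\fm^2$.)

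For (3), torsion-freeness of the $A_P$-module $\C(\fm^N)$ is equivalent, via the isomorphism of (1)–(2), to torsion-freeness of $\fm^N$ as an $A_P$-module, and this is immediate: $\fm^N$ is a submodule of $\cO_{K,P}$, which is a finite free $A_P$-module (the completion of $\cO_K$ at the unique prime above $P$ is a complete DVR, finite and free over the complete DVR $A_P$ since $P$ is totally ramified of degree $q^d-1$), and a submodule of a free module over a domain is torsion-free. Alternatively and more directly: if $a \in A \setminus \{0\}$ and $x \in \fm^N$ with $ax = 0$, then $ax = 0$ in the field $K_{K,P}$ forces $x = 0$; and a general element of $A_P \setminus \{0\}$ is a unit times a power of $P$, so the same conclusion holds. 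Assembling (1), (2), (3) completes the proof.
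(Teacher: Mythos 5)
Your proposal is correct and takes essentially the same route as the paper. For part (1) the paper derives the key valuation bound $v(e_n) \geq -q^n/(q^d-1)$ (where $v(P)=1$) from the recursion $e_n = e_{n-1}^q/(T^{q^n}-T)$ rather than from the closed form $e_n = 1/D_n$, but this is cosmetic; the resulting convergence and ``induces the identity on $\fm^i/\fm^{i+1}$'' argument is identical. The paper dispatches (2) and (3) with ``follow immediately from (1),'' and your paragraphs on those two points correctly unpack what that means: via the $A[\Delta]$-linear isomorphism $\exp_{\C,P}\colon \fm^N \isomto \C(\fm^N)$ one transports the natural $A_P$-module structure on $\fm^N$ (which exists and is unique because $\fm^N$ is $P$-adically complete, the $P$-adic and $\fm$-adic filtrations being cofinal), and torsion-freeness is inherited from $\fm^N \subset \cO_{K,P}$ being a domain. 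One small remark: your framing of the convergence step as needing to be ``uniform enough that the $A$-action actually extends'' slightly misstates the logic --- the Carlitz $A$-action on $\C(\fm^N)$ is there from the start, and all that (1) needs is convergence plus bijectivity; the $A_P$-extension is handled cleanly by your argument in the next paragraph and is not a convergence issue. Also ``$K_{K,P}$'' is a typo for $K_P$.
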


One can show that (2) also holds for $N=1$, but we will not need this.

\begin{proof} (2) and (3) follow immediately from (1). The proof of (1) is a matter of analyzing the $P$-adic valuation of the coefficients of $\exp_\C X$. Write $\exp_\C X$ as 
\[
	\exp_\C X = e_0 X + e_1 X^q + e_2 X^{q^2} + \cdots
\]
with $e_0=1$. Let $v$ be the valuation on $K_P$ normalized such that $v(P)=1$ (so that the valuation of a uniformizer of $\cO_{K,P}$ is $\frac{1}{q^d-1}$).
The functional equation $\exp_\C (TX) = T \exp_\C X + (\exp_\C X)^q$ implies that for all $n\geq 1$ we have
\[
	e_n   = \frac{1}{T^{q^n}-T} e_{n-1}^q.
\]
Note that 
\[
	v(T^{q^n}-T) = \begin{cases}
		1 & \text{if $d$ divides n,}\\
		0 & \text{otherwise,}
	\end{cases}
\]
from which we obtain by induction that
\[
	v(e_n) \geq -\frac{q^n}{q^d-1}
\]
for all $n$. This implies that $\exp_\C$ converges on $\fm^2$. Moreover, we see that for all nonzero $x\in \fm^2$ one has $v(x-\exp_\C x) > v(x)$, and hence that $\exp_\C$ induces the identity map on $\fm^i/\fm^{i+1}$ for all $i\geq 2$. Together with the functional equation for $\exp_\C$ this proves part (1) of the proposition.
\end{proof}

\begin{void}
Recall that $\cU$ denotes the image of $\U(\cO_K)$ in $\C(\cO_{K})$. The submodule
$\cU_2 := \cU \cap \C(\fm^2)$ is of finite index in $\cU$. We denote by $\widebar{\cU_2}$ its closure in $\C(\fm^2)$. This is an $A_P[\Delta]$-module which is torsion-free as $A_P$-module. The natural map
\[
	\alpha\colon A_P\otimes_A \cU_2 \to \widebar{\cU_2}
\]
is surjective. We will now show that $\alpha$ is an isomorphism, a statement analogous to \emph{Leopoldt's conjecture} for cyclotomic number fields (which is a theorem by Brumer \cite{BRU}). The main ingredient is a result on linear independence of $P$-adic Carlitz logarithms, which is shown by Vincent Bosser in an appendix to this paper. 
\end{void}

\begin{theorem}\label{thmLeopoldt} $\alpha$ is an isomorphism.
\end{theorem}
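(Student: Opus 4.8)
\emph{Overview.} The map $\alpha$ is surjective by a soft compactness argument, and the substance is injectivity, which I would reduce --- via a rank count and the $\Delta$-isotypical decomposition --- to a non-vanishing statement for certain linear forms in $P$-adic Carlitz logarithms of algebraic points. That non-vanishing is exactly the kind of assertion delivered by Bosser's $P$-adic Baker--Brumer theorem in the appendix.

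\emph{Surjectivity.} Since $A_P$ is compact and $\cU_2$ is a finitely generated $A$-module, $A_P\otimes_A\cU_2$ is compact, so $\alpha$ has compact, hence closed, image in $\C(\fm^2)$. This image contains $\cU_2$ and is contained in the closed $A_P$-submodule $\widebar{\cU_2}$; as $\cU_2$ is dense in $\widebar{\cU_2}$, the image is all of $\widebar{\cU_2}$.

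\emph{Reduction of injectivity.} By Proposition \ref{propPexp} the $A_P$-module $\C(\fm^2)$ is torsion-free, hence so are $\widebar{\cU_2}$ and each $e_\chi\widebar{\cU_2}$. Since $e^-$ preserves $\C(\fm^2)$ and $\cU_2$ is torsion-free, $e^-\cU_2\subseteq\cU^-=\cU_{\tors}$ forces $\cU_2=(1-e^-)\cU_2\subseteq\cU^+$, a finite-index $A[\Delta]^+$-submodule of $\cU^+$; as $A[\Delta]^+$ is a principal ideal ring and $\cU^+$ is free of rank one over it (see \ref{ADeltaLambda}), so is $\cU_2$. Fix a generator $w\in\cU_2$. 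The idempotents $e_\chi$, $\chi\colon\Delta\to A_P^\times$, split $A_P[\Delta]$ into a product of copies of $A_P$ (note $q^d-1=\#\Delta$ is a unit in $A_P$), so $\alpha=\bigoplus_\chi e_\chi\alpha$. For odd $\chi$ one has $e_\chi(A_P\otimes_A\cU^+)=0$, so $e_\chi\alpha$ is the zero map of zero modules. For even $\chi$, $e_\chi(A_P\otimes_A\cU_2)$ is a finite-index $A_P$-submodule of $e_\chi(A_P\otimes_A\cU^+)\cong A_P$, hence free of rank one, generated by $e_\chi(1\otimes w)$; since $e_\chi\alpha$ surjects onto the torsion-free module $e_\chi\widebar{\cU_2}$, it is injective if and only if $e_\chi w\neq0$ in $\C(\fm^2)$. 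Transporting through the isomorphism $\exp_{\C,P}\colon\fm^2\isomto\C(\fm^2)$ and writing $\log_{\C,P}$ for its inverse, this says exactly $e_\chi\log_{\C,P}(w)\neq0$.

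\emph{Application of Bosser's theorem.} As the coefficients of $\exp_\C$ lie in $k\subseteq\cO_{K,P}^\Delta$, we have $\log_{\C,P}(\sigma w)=\sigma(\log_{\C,P}(w))$ for $\sigma\in\Delta$, so
\[
	e_\chi\log_{\C,P}(w)=-\sum_{\sigma\in\Delta}\chi^{-1}(\sigma)\,\log_{\C,P}(\sigma w),
\]
an $\overline{k}$-linear form, with all coefficients nonzero, in the $P$-adic Carlitz logarithms of the algebraic points $\sigma w\in\C(\cO_K)\cap\C(\fm^2)$. By Bosser's $P$-adic Baker--Brumer theorem (appendix), every $\overline{k}$-linear relation among $\{\log_{\C,P}(\sigma w)\}_{\sigma\in\Delta}$ is an $\overline{k}$-combination of those forced by $A$-linear relations, modulo torsion, among the points themselves; since $\cU^+$ is free of rank one over $A[\Delta]^+$ with generator $w$ and $\C(\fm^2)$ is torsion-free, the module of such relations is precisely the kernel $e^-A[\Delta]$ of $A[\Delta]\twoheadrightarrow A[\Delta]^+$. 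It therefore suffices to note that $e_\chi=-\sum_\sigma\chi^{-1}(\sigma)\sigma$ does not lie in $e^-\overline{k}[\Delta]$ when $\chi$ is even: indeed $e^-e_\chi=e_\chi$ for $\chi$ odd and $e^-e_\chi=0$ for $\chi$ even. Hence $e_\chi\log_{\C,P}(w)\neq0$ for every even $\chi$, each $e_\chi\alpha$ is injective, and $\alpha$ is an isomorphism. (If Bosser's theorem is available with $k_P$-coefficients in place of $\overline{k}$-coefficients, the isotypical decomposition can be skipped and one argues directly with a basis of $\cU_2$.)

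\emph{Main obstacle.} The genuine input is Bosser's transcendence theorem; the remainder is bookkeeping. The one delicate point in that bookkeeping is to arrange that the quantity to be shown nonzero is a \emph{non-forced} $\overline{k}$-linear form in logarithms of algebraic points --- which is exactly where the hypothesis ``$\chi$ even'' enters, through the identity $e^-e_\chi=0$.
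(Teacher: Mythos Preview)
Your proof is correct and follows essentially the same strategy as the paper: split $\alpha$ into isotypical components, note the odd part is trivially zero, and for each even $\chi$ show $e_\chi\widebar{\cU_2}\neq 0$ by appealing to Bosser's $P$-adic Baker--Brumer theorem. The only cosmetic difference is that the paper applies Bosser directly to an $A$-basis $u_1,\dots,u_r$ of $\cU_2$ (whose logarithms are then $\overline{k}$-independent, so any nontrivial $\overline{k}$-combination is nonzero), whereas you work with the $\Delta$-orbit $\{\sigma w\}$ of an $A[\Delta]^+$-generator $w$ of $\cU_2$ and verify that $e_\chi$ does not lie in the relation module $e^-\overline{k}[\Delta]$; the two packagings are equivalent once one extracts an $A$-basis from the orbit.
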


\begin{proof}
It suffices to show that $\alpha$ is injective. Note that $\cU^- \cap \C(\fm^2)=0$, so that the odd part of $\ker \alpha$ vanishes. So let $\chi\colon \Delta\to (A/PA)^\times$ be even. Since $e_\chi(A_P\otimes_A \cU_2)$ is free of rank one over $A_P$ it suffices to show that $e_\chi\widebar{\cU_2}$ is non-zero.

Let $x$ be a nonzero element of
\[
	e_\chi ( (A/PA) \otimes_{\bF_q} \cU_2).
\]
Let $u_1,\ldots,u_r$ be a basis of the $A$-module $\cU_2$. Then there are $\beta_i \in A/PA\otimes_{\bF_q} A$ so that
\[
	x = \beta_1 (1\otimes u_1) + \cdots + \beta_r (1\otimes u_r).
\]
By Proposition \ref{propPexp}, for each $i$ there is a unique $\eta_i \in \fm^2$ with $\exp_{\C,P} \eta_i = u_i$. Since the $\beta_i$ (or rather, their images in $K_P$) are algebraic over $k$, and since they are not all zero we have by
the Baker-Brumer theorem of Vincent Bosser (see the appendix) that
\[
	0 \neq \beta_1 \eta_1 + \cdots + \beta_r \eta_r \in \fm^2.
\]
Applying the $P$-adic Carlitz exponential we find a nonzero element $\exp_{\C,P} (\sum_i \beta_i \eta_i)$ in $e_\chi \widebar{\cU_2}$.
\end{proof}

\begin{corollary} $\widebar{\cU_2}^+$ is free of rank one over $A_P[\Delta]^+$. \qed
\end{corollary}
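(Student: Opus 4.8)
The plan is to deduce this from the Leopoldt isomorphism of Theorem~\ref{thmLeopoldt} together with the freeness of $\cU^+$ recorded in~\ref{ADeltaLambda}. By Theorem~\ref{thmLeopoldt} the natural map $\alpha\colon A_P\otimes_A\cU_2\to\widebar{\cU_2}$ is an isomorphism of $A_P[\Delta]$-modules. Since the idempotent $e^-$ lies in $A[\Delta]\subseteq A_P[\Delta]$, it commutes with $\alpha$, so $\alpha$ respects the odd/even decomposition and restricts to an isomorphism $A_P\otimes_A\cU_2^+\isomto\widebar{\cU_2}^+$. Using $A_P\otimes_A A[\Delta]^+=A_P[\Delta]^+$, it therefore suffices to show that $\cU_2^+$ is free of rank one over $A[\Delta]^+$ and then base-change along $A\to A_P$.

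To prove that last claim I would argue as follows. The submodule $\cU_2$ has finite index in $\cU$, hence $\cU_2^+$ has finite index in $\cU^+$; and by~\ref{ADeltaLambda} the module $\cU^+$ is free of rank one over $A[\Delta]^+$. Thus $\cU_2^+$ may be viewed as an ideal of finite index in $A[\Delta]^+$. Now $A[\Delta]^+$ is a finite product $\prod_i F_i[T]$ of one-variable polynomial rings over finite extensions $F_i/\bF_q$, so an ideal of finite index is of the form $\prod_i (f_i)$ with every $f_i\neq 0$; as each $F_i[T]$ is a principal ideal domain, each factor $(f_i)$ is free of rank one over $F_i[T]$, and hence $\cU_2^+$ is free of rank one over $A[\Delta]^+$.

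I do not expect a genuine obstacle: the real content is already Theorem~\ref{thmLeopoldt}, which in turn rests on Bosser's $P$-adic Baker-Brumer theorem from the appendix. The remaining ingredients are only the elementary structure theory of ideals in a finite product of polynomial rings over finite fields, together with the fact that base change preserves freeness and rank.
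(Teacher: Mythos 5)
Your proof is correct and is essentially the argument the paper leaves implicit (the corollary is stated with an immediate \qed after Theorem~\ref{thmLeopoldt}): apply the Leopoldt isomorphism $\alpha$, pass to the even part, and use that a finite-index sub-$A[\Delta]^+$-module of the free rank-one module $\cU^+$ is itself free of rank one because $A[\Delta]^+$ is a finite product of polynomial rings over finite fields.
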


%
%
%

Let $\widebar{\cL_2} \subset \C(\cO_{K,P})$ be the topological closure of $\cL_2 := \cL \cap \fm^2$. Then $\widebar{\cL_2}$ is an $A_P[\Delta]$-submodule of $\widebar{\cU_2}$ with finite quotient. The following proposition is a crucial ingredient for the proof of Theorem \ref{thmeven}.

\begin{proposition}\label{propPAnd}
Let $\chi\colon \Delta \to A_P^\times$ be a homomorphism.  
Then 
\[
	e_\chi \widebar{\cL_2} = L_P(1,\chi) \cdot e_\chi \C(\fm^2)
\]
as $A_P$-submodules of $\C(\fm^2)$. In particular, $L_P(1,\chi)=0$ if and only if $\chi$ is odd.
\end{proposition}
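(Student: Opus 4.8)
The plan is to mimic, $P$‑adically, the proof of Proposition~\ref{propcircL}: there the $\infty$‑adic module $\fM$ was analysed character by character; here the role of $\fM$ is played by $\widebar{\cL_2}$ and that of $\cO_K$ by $\C(\fm^2)$.

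\textbf{Step 1 (reduce to the generators $\ell_m$).} Since $\exp_\C$ is $A[\Delta]$‑linear and $\fM$ is the $A$‑span of the elements $\fL_m$ of \eqref{defLm}, the module $\cL=\exp_\C\fM$ is the $A$‑span of the integral points $\ell_m:=\exp_\C\fL_m\in\C(\cO_K)$, $m\geq 0$. I would work inside $\C(\cO_{K,P})$ and use that $\C(\fm^2)$ is closed, that $\cO_{K,P}/\fm^2$ is annihilated by $P$ (because $P\in\fm^{q^d-1}\subseteq\fm^2$), and hence that $\cL/\cL_2$ embeds into it; a short index count then yields $\widebar{\cL_2}=\overline{\cL}\cap\C(\fm^2)$, where $\overline{\cL}$ is the $A_P$‑submodule of $\C(\cO_{K,P})$ generated by the images of the $\ell_m$.

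\textbf{Step 2 (the $P$‑adic expansion of $\ell_m$: the crux).} For each $\sigma\in\Delta$ the series $L_\sigma:=\sum_{a\in A_{+,\sigma}}1/a$, grouped by degree, converges in $A_P$ — it is a finite $A_P$‑linear combination of the values $L_P(1,\psi)$ whose convergence is recalled in \ref{intPL}. For $m\geq 2$ one has $\sum_{\sigma}\phi_\sigma(\lambda)^m L_\sigma\in\fm^2$, which lies in the $P$‑adic domain of convergence of $\exp_\C$ by Proposition~\ref{propPexp}. The non‑formal input — this is Anderson's $P$‑adic construction of special points \cite[\S4.10]{AND} — is that the image of $\ell_m$ in $\C(\cO_{K,P})$ equals $\exp_{\C,P}\!\bigl(\sum_{\sigma\in\Delta}\phi_\sigma(\lambda)^m L_\sigma\bigr)$: the point $\ell_m$, defined by $\infty$‑adic summation, has this explicit $P$‑adic expansion. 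I expect this comparison of the two completions to be the main obstacle; everything else is bookkeeping.

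\textbf{Step 3 (isotypic assembly).} Applying $e_\chi$ and using $e_\chi\phi_\sigma(\lambda)^m=\chi(\sigma)e_\chi\lambda^m$ together with $\sum_\sigma\chi(\sigma)L_\sigma=L_P(1,\chi)$ (the degree groupings matching \eqref{eqPL}) gives $e_\chi\ell_m=L_P(1,\chi)\cdot\exp_{\C,P}(e_\chi\lambda^m)$ for $m\geq 2$. Since $\sum_{m\geq 2}A\lambda^m=\lambda^2\cO_K$ is $P$‑adically dense in $\fm^2=\lambda^2\cO_{K,P}$, the closed $A_P$‑span of $\{\exp_{\C,P}(e_\chi\lambda^m):m\geq2\}$ is $e_\chi\C(\fm^2)$; as the $\ell_m$ with $m\geq 2$ lie in $\cL_2$, this already yields $L_P(1,\chi)\cdot e_\chi\C(\fm^2)\subseteq e_\chi\widebar{\cL_2}$ for every $\chi$. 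For the reverse inclusion one computes $e_\chi\overline{\cL}$ from the $\ell_m$: apart from the $m\geq 2$ part this involves only $\ell_0$ and $\ell_1$, and $e_\chi\ell_0=0$ for $\chi\neq\mathbf 1$ (as $\fL_0$ is a scalar, so $\ell_0$ is $\Delta$‑invariant), while for $\chi\neq\omega$ the element $e_\chi\ell_1$ already lies in $L_P(1,\chi)e_\chi\C(\fm^2)$ (because then $e_\chi\lambda\in\fm^2$, so the $m=1$ expansion applies after projecting to the $\chi$‑component). Intersecting $e_\chi\overline{\cL}$ with $e_\chi\C(\fm^2)$ then gives the claimed equality for every $\chi\notin\{\mathbf 1,\omega\}$; the tautological character $\omega$ is odd and falls under Step~4, and the trivial character (relevant only for $q>2$, where it is even) is dispatched by a direct inspection of $\ell_0=\exp_\C\fL_0\in\C(A)$.

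\textbf{Step 4 (the ``in particular'').} If $\chi$ is odd, then $e_\chi\widebar{\cU_2}=0$ because $\cU^-=\cU_\tors$ (Lemma~\ref{lemmaminustorsion}) maps to $0$ in the $A_P$‑torsion‑free module $\C(\fm^2)$; hence $e_\chi\widebar{\cL_2}=0$, and the inclusion of Step~3 forces $L_P(1,\chi)\cdot e_\chi\C(\fm^2)=0$, so $L_P(1,\chi)=0$ since $e_\chi\C(\fm^2)$ is free of rank one over $A_P$. If $\chi$ is even, then $e_\chi\widebar{\cL_2}$ has finite index in the rank‑one free $A_P$‑module $e_\chi\widebar{\cU_2}$ (the corollary to Theorem~\ref{thmLeopoldt}), so it is nonzero and therefore $L_P(1,\chi)\neq 0$.
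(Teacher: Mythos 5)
Your proposal follows the same route as the paper: Anderson's $P$-adic identity $\exp_{\C,P}\fL_{m,P}=\exp_\C\fL_m$ for $m\geq 2$ (the ``crux'' you flag in Step~2 is exactly \cite[Proposition~12]{AND}, which the paper cites), the isotypic computation $e_\chi\fL_{m,P}=L_P(1,\chi)\,e_\chi\lambda^m$, and a comparison of generating sets, with the odd case handled by torsion-freeness of $\C(\fm^2)$. The one genuine difference is how you dispose of the low-degree generators $\ell_0,\ell_1$. The paper does this in one stroke by quoting Anderson's Proposition~9, which says that the quotient of $\cL$ by the $A$-submodule generated by $\{\exp_\C\fL_m : m\geq 1\}$ is annihilated by $P-1$; since $P-1$ is a \emph{unit} in $A_P$, the $m=0$ generator contributes nothing after passing to $A_P$-modules, and then $e_\chi\lambda=0$ for even $\chi$ (as $\chi\neq\omega$) kills the $m=1$ generator. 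You instead analyze $e_\chi\ell_0$ and $e_\chi\ell_1$ directly. That can be made to work, but it is more delicate than your sketch suggests: the assertion that ``for $\chi\neq\omega$ the $m=1$ expansion applies after projecting'' needs the additional observation that the identity $P\exp_{\C,P}\fL_{1,P}=P\exp_\C\fL_1$ can be divided by $P$ after applying $e_\chi$, which requires identifying the $P$-torsion of $\C(\cO_{K,P})$ as the $\omega$-isotypic line generated by $\lambda$, and the $\chi=\mathbf 1$ case (relevant when $q>2$) still needs the $\ell_0$ inspection you only gesture at. The paper's route via Proposition~9 and the invertibility of $P-1$ in $A_P$ avoids all of this and is the cleaner choice; but your argument contains the same essential ideas and, once those boundary cases are written out carefully, is correct.
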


\begin{proof}
Recall from section \ref{secnormalbasis} that $\lambda \in \cO_K$ denotes a fixed generator of $\C(K)[P]$. 
For $m\geq 1$ consider the series 
\[
	\fL_{m,P} := \sum_{\sigma \in \Delta} \sigma(\lambda)^m
	\left( \sum_{n \geq 0 } \sum_{a \in A_{+,n,\sigma}} \frac{1}{a}
	\right).
\]
in $\cO_{K,P}$.
Here $A_{+,n,\sigma}$ is the set of monic polynomials in $A$ of degree $n$ which reduce modulo $P$ to $\sigma\in (A/PA)^\times$. Anderson \cite[Proposition 12]{AND} has shown that these series converge $P$-adically to elements of $\fm^2$ satisfying the remarkable identities
\begin{equation}\label{eqAnderson}
	\exp_{\C,P} \fL_{m,P} = \exp_\C \fL_m \quad\text{ for all $m\geq 2$ }
\end{equation}
and
\begin{equation}\label{eqAnderson1}
	P \exp_{\C,P} \fL_{1,P} = P \exp_\C \fL_1.
\end{equation}
Note that these are identities in $\C(\cO_K)$, but that a priori their left-hand side is $P$-adic and lives in $\C(\fm^2)$ whereas their right-hand side is $\infty$-adic and lives in $\C(K_\infty)$.
By exactly the same reasoning as in Proposition \ref{propcircL} we have for all $m\geq 1$ and for all $\chi\colon \Delta\to A_P^\times$ that
\begin{equation}\label{eqLP1}
	e_\chi \fL_{m,P} = L_P(1,\chi) e_\chi \lambda^m.
\end{equation}

We will prove the claim of the proposition separately for odd and even $\chi$. 

If $\chi$ is odd then $e_\chi \bar{\cL}=0$. By (\ref{eqAnderson}) we have $\exp_{\C,P} \fL_{m,P} \in \bar{\cL}$ for all $m\geq2$. Since $\exp_{\C,P}$ defines an isomorphism of $A_P[\Delta]$-modules between $\fm^2$ and $\C(\fm^2)$ we find that $e_\chi \fL_{m,P}=0$ for all $m\geq 2$. On the other hand, $e_\chi \fm^2 \neq 0$ so there are $m\geq 2$ with $e_\chi \lambda^m \neq 0$. By (\ref{eqLP1}) we must have $L_P(1,\chi)=0$.

So now assume that $\chi$ is even. We have that $L_P(1,\chi) e_\chi \fm^2$ is generated as an $A_P$-module by
\[
	\left\{ L_P(1,\chi) e_\chi \lambda^m \colon m \geq 2 \right\}.
\]
By \cite[Proposition 9]{AND} the quotient of $\cL$ by the submodule generated by the $\exp_\C \fL_m$ with $m\geq 1$ is annihilated by $P-1$. Moreover, since $\chi$ is even we have $e_\chi \lambda =0$ and hence $e_\chi \fL_1=0$. We conclude that the $A_P$-module $e_\chi \bar{\cL}$ is  generated by
\[
	\left\{ e_\chi \fL_{m,P} \colon m \geq 2 \right\}.
\]
Comparing the two generating sets using (\ref{eqLP1}) and applying the isomorphism $\exp_{\C,P}$ 
between $\fm^2$ and $\C(\fm^2)$ we conclude $e_\chi \bar{\cL} =  L_P(1,\chi) \cdot e_\chi \C(\fm^2)$, as claimed. Since $e_\chi \bar{\cL} \neq 0$ we also find that $L_P(1,\chi)$ is non-zero.
\end{proof}

\begin{corollary}
For even $\chi$ we have
\[
	\length_{A_P} e_\chi(A_P\otimes_A \H(\cO_K)) = \length_{A_P} \frac{ e_\chi \widebar{\cU_2}}{ e_\chi\widebar{\cL_2}}.
\]
\end{corollary}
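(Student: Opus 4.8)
The plan is to reduce the statement to the $\infty$-adic identity $\Fitt_{A[\Delta]}(\cU/\cL)=\Fitt_{A[\Delta]}\H(\cO_K)^{+}$ of Theorem~\ref{thmdivclosure}, by comparing lengths over the discrete valuation ring $A_P$. One may assume $q>2$ throughout, since for $q=2$ every character of $\Delta$ is odd and there is nothing to prove. For a finite $A[\Delta]$-module $M$ write $M_P:=A_P\otimes_A M$; since $A_P$ is flat over $A$ and is a DVR with uniformizer $P$, the length $\length_{A_P}e_\chi M_P$ equals the $P$-adic valuation of the $\chi$-component of $\Fitt_{A[\Delta]}M$, and so depends only on $\Fitt_{A[\Delta]}M$. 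Applying this with $M=\H(\cO_K)^{+}$, and using that $e_\chi\H(\cO_K)_P=e_\chi(\H(\cO_K)^{+})_P$ for even $\chi$, Theorem~\ref{thmdivclosure} gives $\length_{A_P}e_\chi\H(\cO_K)_P=\length_{A_P}e_\chi(\cU/\cL)_P$.

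On the other side, Theorem~\ref{thmLeopoldt} identifies $\widebar{\cU_2}$ with $(\cU_2)_P$, and the same holds for $\cL_2$: the natural surjection $(\cL_2)_P\to\widebar{\cL_2}$ — its image being a finitely generated $A_P$-submodule of the free $A_P$-module $\C(\fm^2)$, hence closed — is also injective, because it factors through the isomorphism $(\cL_2)_P\hookrightarrow(\cU_2)_P\isomto\widebar{\cU_2}$. Thus $e_\chi\widebar{\cU_2}/e_\chi\widebar{\cL_2}\cong e_\chi(\cU_2/\cL_2)_P$, and it remains to prove $\length_{A_P}e_\chi(\cU/\cL)_P=\length_{A_P}e_\chi(\cU_2/\cL_2)_P$ for every even $\chi$.

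For this, note that $\cL\subseteq\cU$ and $\cL_2=\cL\cap\C(\fm^2)=\cL\cap\cU_2$, so the natural map $\cU_2/\cL_2\to\cU/\cL$ is injective with cokernel $\cU/(\cU_2+\cL)$, a quotient of $\cU/\cU_2$; hence it suffices to show $e_\chi(\cU/\cU_2)_P=0$ for even $\chi$. The composite $\cU\hookrightarrow\C(\cO_K)\to\C(\cO_{K,P})\to\C(\cO_{K,P})/\C(\fm^2)$ has kernel exactly $\cU_2$, so $\cU/\cU_2$ embeds $\Delta$-equivariantly into $\C(\cO_{K,P})/\C(\fm^2)=\C(\cO_{K,P}/\fm^2)$, which fits in a $\Delta$-equivariant exact sequence $0\to\C(\fm/\fm^2)\to\C(\cO_{K,P}/\fm^2)\to\C(\cO_{K,P}/\fm)\to 0$.

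Finally I would analyse the two graded pieces after localization at $P$. On $\fm/\fm^2$ the Carlitz action of $a\in A$ is multiplication by the image $\bar a$ in the residue field $A/P$, so $\C(\fm/\fm^2)\cong A/P$ as an $A$-module; moreover $\Delta$ acts on it through the tautological character $\omega$ — because $\delta(\lambda)\equiv\omega(\delta)\lambda$ modulo $\fm^2$ — which is odd, so $e_\chi\C(\fm/\fm^2)=0$ for every even $\chi$. The quotient $\C(\cO_{K,P}/\fm)$ is the Carlitz module over the residue field $A/P\cong\bF_{q^d}$: since $P$ is totally ramified in $K/k$, $\Delta$ fixes $A_P$ and hence the residue field, so acts trivially; and $\phi_P\equiv X^{q^{d}}\pmod P$ acts as the identity on $\bF_{q^d}$, so $P$ acts invertibly and $\C(\cO_{K,P}/\fm)_P=0$. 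Tensoring the exact sequence with $A_P$ therefore yields $\C(\cO_{K,P}/\fm^2)_P\cong A/P$ with $\Delta$ acting by the odd character $\omega$, whence $e_\chi\C(\cO_{K,P}/\fm^2)_P=0$, and a fortiori $e_\chi(\cU/\cU_2)_P=0$, for every even $\chi$; combining the length identities above then finishes the proof. The main obstacle is precisely this last step for the trivial character: for even $\chi\neq 1$ the vanishing is purely character-theoretic, but for $\chi=1$ one genuinely needs the fact that the reduced Carlitz module over $A/P$ carries no nonzero $P$-torsion (equivalently $\phi_P\equiv X^{q^{\deg P}}\bmod P$), so that the piece $\C(\cO_{K,P}/\fm)$ disappears under $P$-localization.
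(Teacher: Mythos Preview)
Your proof is correct and follows essentially the same route as the paper's: both use Theorem~\ref{thmdivclosure}, the short exact sequence $0\to\C(\fm/\fm^2)\to\C(\cO_{K,P}/\fm^2)\to\C(\cO_{K,P}/\fm)\to 0$, the observations that $(\fm/\fm^2)^+=0$ and that $\C(A/PA)\cong A/(P-1)A$ has no $P$-part, and finally Theorem~\ref{thmLeopoldt} to pass from $A_P\otimes_A\cU_2$ to $\widebar{\cU_2}$. The only differences are organisational: the paper phrases the key vanishing as ``$\C(\cO_{K,P}/\fm^2)^+$ is $P$-torsion-free'' and deduces that both $\cU^+/\cU_2^+$ and $\cL^+/\cL_2^+$ have trivial $P$-part, whereas you work directly with the cokernel $\cU/(\cU_2+\cL)$; and you spell out explicitly why $(\cL_2)_P\isomto\widebar{\cL_2}$, which the paper leaves implicit in its appeal to Leopoldt.
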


\begin{proof}
Consider the short exact sequence
\[
	0 \to \C(\fm)/\C(\fm^2) \to \C(\cO_{K,P}/\fm^2) \to \C(\cO_{K,P}/\fm) \to 0.
\]
On the one hand we have $\C(\cO_{K,P}/\fm) \cong \C(A/PA) \cong A/(P-1)A$, and on the other hand we have that $(\fm/\fm^2)^+=0$. From this we deduce that the finite $A$-module $\C(\cO_{K,P}/\fm^2)^+$ is $P$-torsion free.

Since the quotients $\cU^+/\cU_2^+$ and $\cL^+/\cL_2^+$ map injectively to $\C(\cO_{K,P}/\fm^2)^+$, we 
find an isomorphism of $A_P[\Delta]$-modules
\[
	 A_P\otimes_A\frac{\cU^+}{\cL^+} \isomto A_P\otimes_A\frac{\cU_2^+}{\cL_2^+}
\]
and together with Theorem \ref{thmLeopoldt} this yields an isomorphism of
$A_P[\Delta]$-modules.
\[
	A_P \otimes_A \frac{\cU^+}{\cL^+} \isomto \frac{\widebar{\cU_2}^+}{\widebar{\cL_2}^+}.
\]
The corollary now follows from Theorem \ref{thmdivclosure}.
\end{proof}

Together with Proposition \ref{propPAnd} this proves Theorem \ref{thmeven}:

\begin{theorem} Let $\chi\colon \Delta \to A_P^\times$ be even. Then $L_P(1,\chi)\neq 0$ and
\[
	\length_{A_P} e_\chi \left( A_P\otimes_A \H(\cO_K)\right)  +
	\length_{A_P} e_\chi \frac{\C(\fm^2)}{\bar{\cU}} = v_P( L_P(1,\chi) ).
\]\qed
\end{theorem}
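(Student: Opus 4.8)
The plan is to deduce the theorem by a short length computation inside the $A_P$-module $V := e_\chi\C(\fm^2)$, combining the Corollary immediately preceding the theorem with Proposition \ref{propPAnd}. The non-vanishing $L_P(1,\chi)\neq 0$ for even $\chi$ is already the last assertion of Proposition \ref{propPAnd}, so only the displayed identity needs proof.

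First I would observe that $V$ is free of rank one over $A_P$. Indeed, $\C(\fm^2)$ is $A_P$-torsion-free by Proposition \ref{propPexp}(3), so $V$ is finitely generated and torsion-free over the complete discrete valuation ring $A_P$; moreover $V$ contains $e_\chi\widebar{\cU_2}$ — which is free of rank one over $A_P$ by the Corollary to Theorem \ref{thmLeopoldt} — as a submodule of finite index, the finiteness being Theorem \ref{mainthmLeopoldt}. Hence $V\cong A_P$, and $\length_{A_P}(V/xV)=v_P(x)$ for every nonzero $x\in A_P$, with $v_P$ normalized by $v_P(P)=1$.

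Next I would consider the chain of $A_P[\Delta]$-submodules $e_\chi\widebar{\cL_2}\subseteq e_\chi\widebar{\cU_2}\subseteq V$, whose successive quotients are finite (the first because $\widebar{\cU_2}/\widebar{\cL_2}$ is finite, as noted before Proposition \ref{propPAnd}; the second by Theorem \ref{mainthmLeopoldt}). By Proposition \ref{propPAnd} we have $e_\chi\widebar{\cL_2}=L_P(1,\chi)\cdot V$, so $\length_{A_P}(V/e_\chi\widebar{\cL_2})=v_P(L_P(1,\chi))$. Additivity of length along
\[
	0\to e_\chi\widebar{\cU_2}/e_\chi\widebar{\cL_2}\to V/e_\chi\widebar{\cL_2}\to V/e_\chi\widebar{\cU_2}\to 0
\]
then yields $v_P(L_P(1,\chi)) = \length_{A_P}(e_\chi\widebar{\cU_2}/e_\chi\widebar{\cL_2}) + \length_{A_P} e_\chi(\C(\fm^2)/\widebar{\cU_2})$, and rewriting the first summand as $\length_{A_P} e_\chi(A_P\otimes_A\H(\cO_K))$ via the Corollary preceding the theorem gives exactly the claim.

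In this final form there is essentially no obstacle left: all the genuine input has been absorbed into Proposition \ref{propPAnd} (which rests on Anderson's $P$-adic special points and the convergence identities (\ref{eqAnderson})--(\ref{eqAnderson1})) and into Theorem \ref{thmLeopoldt} (the Baker--Brumer theorem of the appendix). The only point requiring a moment's care is the freeness of $e_\chi\C(\fm^2)$ over $A_P$, so that lengths may be read off as $P$-adic valuations; this is exactly where the torsion-freeness from Proposition \ref{propPexp}(3) enters essentially.
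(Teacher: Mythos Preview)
Your proof is correct and is exactly the argument the paper has in mind: the paper simply writes ``Together with Proposition~\ref{propPAnd} this proves Theorem~\ref{thmeven}'' and marks the restatement with \qed, and your write-up is precisely the length computation in $e_\chi\C(\fm^2)\cong A_P$ that makes this sentence explicit. The only detail you add beyond the paper's one-line deduction is the verification that $e_\chi\C(\fm^2)$ is free of rank one over $A_P$, which is indeed the point where Proposition~\ref{propPexp}(3) and Theorem~\ref{mainthmLeopoldt} are used.
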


\section*{Acknowledgements} 

The authors are grateful to the referees for various suggestions and corrections that improved the paper.

The second author would like to thank Ted Chinburg, David Goss, and Ambrus P\'al for numerous discussions related to this paper. He is supported by a VENI grant of the Netherlands Organisation for Scientific Research (NWO).

\end{document}